\newcommand{\sbm}[1]{{\let\amp=&\left[\begin{smallmatrix}#1\end{smallmatrix}\right]}}
\newcommand\isomto{\stackrel{\sim}{\smash{\longrightarrow}\rule{0pt}{0.4ex}}}
\newcommand{\N}{\mathbb{N}}
\newcommand{\Z}{\mathbb{Z}}
\newcommand{\R}{\mathbb{R}}
\newcommand{\C}{\mathbb{C}}
\newcommand{\bbc}{\textbf{c}}
\newtheorem{theorem}{Theorem}[section] 
\newtheorem{proposition}[theorem]{Proposition}
\newtheorem{lemma}[theorem]{Lemma}
\newtheorem{corollary}[theorem]{Corollary}
\theoremstyle{definition}
\newtheorem{definition}[theorem]{Definition} 
\newtheorem{example}[theorem]{Example}
\theoremstyle{remark}
\newtheorem{remark}[theorem]{Remark}
\begin{document}

\title[Singular Legendrian knots and the surgery formula in two dimensions]{Contact homology computations for singular Legendrian knots and the surgery formula in two dimensions}

\author{Martin Bäcke}
\address{Department of mathematics, Uppsala University, Box 480, 751 06
Uppsala, Sweden}
\email{martin.backe@math.uu.se}

\maketitle 

\begin{abstract}	
	The Chekanov--Eliashberg dg-algebra is an algebraic invariant of
	Legendrian submanifolds of contact manifolds, whose definition recently
	has been extended to singular Legendrians. We describe a way of
	constructing simpler models of this dg-algebra for singular Legendrian
	knots in $\R^{3}$, and give the first examples of singular knots for
	which the full cohomology can be computed. This includes the
	$A_{n}$-Legendrians. An
	important question in the study of the Chekanov--Eliashberg dg-algebra
	is to understand how its quasi-isomorphism type changes when the
	Legendrian undergoes Weinstein ribbon isotopy. By explicit computation
	we show that quite dramatic changes are possible
	and that among other things, Weinstein ribbon isotopic Legendrians can
	have Koszul dual dg-algebras.
	Along the way, we compute the cohomology of the
	Chekanov--Eliashberg dg-algebra in the boundary of a Weinstein surface.  
	This finishes the
	proof of the Bourgeois--Ekholm--Eliashberg surgery formula in dimension
	two, which was missing from the literature.
\end{abstract}

\setcounter{tocdepth}{2}
\tableofcontents

\section{Introduction}
The Chekanov--Eliashberg dg-algebra is an invariant of Legendrian submanifolds
of contact manifolds. In recent years, its definition has been extended to
singular Legendrian skeleta of Weinstein hypersurfaces. This was first done
combinatorially by An--Bae in \cite{AB20} for singular Legendrians in $\R^{3}$,
and given a geometric-analytic generalization by Asplund--Ekholm in \cite{AE21}
to skeleta of Weinstein hypersurfaces in the boundary of a general Weinstein
manifold.  For a singular Legendrian $\Lambda$, which is the skeleton of some
Weinstein hypersurface $V$, living in the ideal contact boundary of some
Weinstein manifold $W$, we denote this algebra by $CE^{*}( \Lambda;V_{0};W)$,
where $V_{0} \subset W$ is a subcritical Weinstein subdomain of $V$ that
contains all subcritical handles for some choice of handle decomposition of
$W$. Through the surgery formula
\cite[Theorem 1.1]{AE21}, this algebra is quasi-equivalent to the endomorphism
algebra of the co-cores of the Weinstein sector obtained by stopping the manifold at
the hypersurface $V$. When $\Lambda$ is smooth, $CE^{*}( \Lambda;V_{0};W )$ is
quasi-isomorphic to the Chekanov--Eliashberg dg-algebra with based loop space
coefficients \cite[Theorem 1.2]{AE21}. For a connected smooth Legendrian knot
in $\R^{3}$, this dg-algebra is also
quasi-isomorphic to the more classical Chekanov--Eliashberg dg-algebra with
non-central coefficients in $\Z[t,t^{-1}]$. 

In this paper, we describe a way of constructing simpler, finitely generated
models of the Chekanov--Eliashberg dg-algebra for singular Legendrians in
$\R^{3}$, and apply these results to compute the cohomology of the dg-algebra in a
number of examples. These include the infinite families of the $\theta$-Legendrian and its
generalizations, and the standard $A_{n}$-Legendrians. There is also an
infinite class of singular Legendrians for which this method produces finite
dimensional models. These are the first
examples of singular Legendrians for which the full
cohomology can be computed. 

One important feature of the Chekanov--Eliashberg dg-algebra is that while it
is invariant up to quasi-isomorphism of under Legendrian isotopy, the
quasi-isomorphism type is not invariant under Weinstein isotopy, i.e. isotopy
of $V$ through Weinstein hypersurfaces, under which the skeleton can undergo
deformations, see \autoref{ssec:isotopy}. Through the
surgery formula \cite[Theorem 1.1]{AE21}, this corresponds to different
choices of generators for the partially wrapped Fukaya category. We give
several explicit examples of how invariance fails under Weinstein isotopy, and
show among other things that each $A_{n}$-Legendrian is Weinstein isotopic to
a Legendrian with Koszul dual cohomology, see \autoref{res:an-isotopy}.

The algebra $CE^{*}( \Lambda;V_{0};W )$ contains the Chekanov--Eliashberg
dg-algebra $CE^{*}( \partial \Lambda;V_{0} )$ of the attaching spheres $\partial
\Lambda \subset \partial V_{0}$ as a unital dg-subalgebra. We compute the cohomology
and minimal model of this subalgebra when $\dim V=2$, where it is also known as
the internal algebra of Ekholm--Ng. This finishes the proof of the
Bourgeois--Ekholm--Eliashberg surgery formula in dimension two, which
previously was not considered in the literature. In contrast to the situation
in higher dimensions, the surgery map for surfaces fails to be an
$A_{\infty}$-isomorphism, while it still is a quasi-isomorphism.

\subsection*{Outline}
In the remaining part of Section 1, we state our main results. Section 2 and
Section 3 are mainly a review of basic definitions and results about
Chekanov--Eliashberg dg-algebras and singular Legendrians. The exception is
Section 2.2, which deals with the surgery formula for Weinstein surfaces. In
Section 4, we prove our main results.  Finally, in Section 5 we give examples
of how one can apply these results to compute the cohomology of the
Chekanov--Eliashberg dg-algebra. 

\subsection{Chekanov--Eliashberg dg-algebras and the surgery formula
for Weinstein surfaces}

Let $V$ be a compact Weinstein surface and let $\partial \Lambda \subset
\partial V$ be an embedding which is a $0$-dimensional Legendrian, partitioned
into two-point spheres and one-point stops. 
(We write $\partial \Lambda$ since this subset naturally is the boundary set of
a choice of skeleton.)
The Reeb chords are
counter-clockwise arcs with endpoints on $\partial \Lambda$.  We fix a base
point in $\partial V \setminus \partial \Lambda$ for each component of
$\partial V$ and denote the chords of $\partial \Lambda$ by $c_{ij}^{p}$, where
$i$ is the starting
point, $j$ is the endpoint, and $p$ is the number of times the chord passes
through the base point. See \autoref{fig:internal-algebra}.  
\begin{figure}[!htb]
    \centering
    
\begingroup%
  \makeatletter%
  \providecommand\color[2][]{%
    \errmessage{(Inkscape) Color is used for the text in Inkscape, but the package 'color.sty' is not loaded}%
    \renewcommand\color[2][]{}%
  }%
  \providecommand\transparent[1]{%
    \errmessage{(Inkscape) Transparency is used (non-zero) for the text in Inkscape, but the package 'transparent.sty' is not loaded}%
    \renewcommand\transparent[1]{}%
  }%
  \providecommand\rotatebox[2]{#2}%
  \newcommand*\fsize{\dimexpr\f@size pt\relax}%
  \newcommand*\lineheight[1]{\fontsize{\fsize}{#1\fsize}\selectfont}%
  \ifx\svgwidth\undefined%
    \setlength{\unitlength}{93.63903737bp}%
    \ifx\svgscale\undefined%
      \relax%
    \else%
      \setlength{\unitlength}{\unitlength * \real{\svgscale}}%
    \fi%
  \else%
    \setlength{\unitlength}{\svgwidth}%
  \fi%
  \global\let\svgwidth\undefined%
  \global\let\svgscale\undefined%
  \makeatother%
  \begin{picture}(1,0.96520292)%
    \lineheight{1}%
    \setlength\tabcolsep{0pt}%
    \put(0,0){\includegraphics[width=\unitlength,page=1]{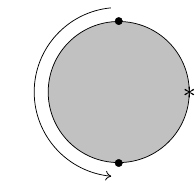}}%
    \put(0.58122925,0.91049014){\color[rgb]{0,0,0}\makebox(0,0)[lt]{\lineheight{1.25}\smash{\begin{tabular}[t]{l}$1$\end{tabular}}}}%
    \put(0.58225909,0.0041809){\color[rgb]{0,0,0}\makebox(0,0)[lt]{\lineheight{1.25}\smash{\begin{tabular}[t]{l}$2$\end{tabular}}}}%
    \put(-0.00576681,0.45184797){\color[rgb]{0,0,0}\makebox(0,0)[lt]{\lineheight{1.25}\smash{\begin{tabular}[t]{l}$c_{12}^{0}$\end{tabular}}}}%
    \put(0,0){\includegraphics[width=\unitlength,page=2]{internal-algebra.pdf}}%
  \end{picture}%
\endgroup%

    \caption{The Chekanov--Eliashberg dg-algebra in the boundary of a disk.}
    \label{fig:internal-algebra}
\end{figure}
The Chekanov--Eliashberg dg-algebra $CE^{*}( \partial \Lambda;V )$ is then an
infinitely generated semi-free unital dg-algebra over the ground field
$\bf{k}$, whose underlying associate algebra is the path algebra of the quiver
with one vertex for each sphere and stop contained in $\partial \Lambda$, and the Reeb
chords as arrows. The results in this paper can be shown to hold over any field
$\bf{k}$, but in order to avoid worrying about signs, our computations will be
carried out in the case $\textbf{k}=\Z_2$.  However, the results remain true in
all characteristics. The differential is given by $\partial = \partial_{0} +
\partial_{-1}$, where
\[
	\partial_{0}( c_{ij}^{p} )  = \sum \pm c_{kj}^{l}c_{ik}^{p-l}
\]
with the sum taken over all $k$ and $l$ for which the chords on the right hand
side exist, and $\partial_{-1}( c_{ij}^{p} ) = e_{k}$ if $c_{ij}^{p}$ is a
chord with $i=j$, $p=1$, and which lives in the boundary of a disk, where
$e_{k}$ is the idempotent corresponding to the sphere or stop containing $i$,
and $\partial_{-1}(c_{ij}^{p} )=0$ otherwise. The differential comes from a
count of pseudo-holomorphic curves as usual, see \autoref{res:zero-dim-diff}.

We attach Weinstein handles at the spheres and half-handles at the stops to
obtain a Weinstein sector $V_{\partial \Lambda}$. Let $C \subset V_{\partial
\Lambda}$ be the co-cores of the handles and half-handles, and let $CW^{*}(
C;V_{\partial \Lambda} )$ be the partially wrapped Floer cohomology of $C$. The
following theorem combines the surgery formula for smooth Legendrians by
Bourgeois--Ekholm--Eliashberg and the surgery formula for singular Legendrians
by Asplund--Ekholm.  In this paper we prove the case when $\dim V_{\partial
\Lambda}=2$, which previously was not treated.  
\begin{theorem}[\autoref{res:surgery-map-surface}]
	There is an $A_{\infty}$-quasi-isomorphism
	\[
		CW^{*}( C;V_{\partial \Lambda} ) \isomto CE^{*}( \partial \Lambda;V ).
	\] 
\end{theorem}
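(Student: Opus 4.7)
The plan is to follow the surgery framework of Bourgeois--Ekholm--Eliashberg and its singular extension by Asplund--Ekholm \cite{AE21}, specialized to the two-dimensional setting. Construct the surgery map $\Phi : CW^{*}(C;V_{\partial \Lambda}) \to CE^{*}(\partial \Lambda;V)$ by counting rigid pseudo-holomorphic disks in the symplectic completion of $V_{\partial \Lambda}$ with boundary on the co-cores $C$, one positive puncture at a Hamiltonian chord of $C$, and an ordered cyclic sequence of negative punctures at Reeb chords of $\partial \Lambda$. The higher $A_{\infty}$-components $\Phi_{d}$ arise from moduli of disks with $d$ positive punctures. To set up the analytic framework, neck-stretch along the contact hypersurface $\partial V \subset \partial V_{\partial \Lambda}$ so that curves in $V_{\partial \Lambda}$ break into an upper level in the symplectization $\R \times \partial V$ (which assembles to the differential of $CE^{*}(\partial \Lambda;V)$) and lower levels trapped inside the handles and half-handles. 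Since $\dim V_{\partial \Lambda}=2$, these handles and half-handles admit an explicit cotangent model of an interval in which the relevant moduli spaces can be made transverse and enumerated.

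The next step is to check that $\Phi$ is an $A_{\infty}$-morphism. A standard boundary-degeneration argument for one-parameter families of the defining moduli spaces produces the $A_{\infty}$-relations; the compactness and gluing results of \cite{AE21} carry over to the surface case, with only analytic simplifications due to the low dimension.

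To prove that $\Phi$ is a quasi-isomorphism, I would introduce an action filtration on both sides: on $CW^{*}(C;V_{\partial \Lambda})$ by the length of a Hamiltonian chord of $C$, and on $CE^{*}(\partial \Lambda;V)$ by the total Reeb length of a word of chords. Under the neck-stretching limit each wrapped Hamiltonian chord of $C$ breaks into a unique rigid configuration whose upper level is a word of Reeb chords of $\partial \Lambda$, giving a bijection between generators on the two sides that is realized by $\Phi_{1}$ on the associated graded. A spectral sequence comparison then yields the desired quasi-isomorphism on cohomology.

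The main obstacle is a phenomenon absent in higher dimensions: in a surface the co-cores have codimension one in $V_{\partial \Lambda}$, so pseudo-holomorphic strips whose boundary crosses a co-core can contribute nontrivially to higher $A_{\infty}$-components of $\Phi$, preventing $\Phi$ from being a strict isomorphism of $A_{\infty}$-algebras as in the higher-dimensional BEE situation. One must therefore verify that these extra contributions strictly raise the action filtration, so that they do not perturb the associated-graded isomorphism, and deal separately with the half-handles at stops, where the local geometry differs from the smooth case. This local case analysis near each handle and half-handle, together with ensuring transversality for the one-dimensional Lagrangians involved, is where the bulk of the technical work lies.
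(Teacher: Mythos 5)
Your overall scheme (use the cobordism-disk definition of the surgery map, filter both sides by action, and compare via the leading-order part of $\Phi^{1}$ in a spectral sequence) is the same as the paper's, but the key input you feed into it is exactly the step that fails in dimension two. You claim that on the associated graded level $\Phi^{1}$ realizes a \emph{bijection between generators}: each wrapped chord of $C$ breaks into a unique configuration giving a word of Reeb chords of $\partial\Lambda$. This is the higher-dimensional Ekholm--Lekili argument, and the whole point of the two-dimensional case is that this bijection of generators does not exist: $CE^{*}(\partial\Lambda;V)$ is generated by \emph{all} composable words in the infinitely many chords $c_{ij}^{p}$ (including long chords and concatable words), whereas $CW^{*}(C;V_{\partial\Lambda})$ has one generator per Reeb chord of $\partial C$ plus intersection points, and -- crucially -- the action-preserving piece of the differential on the $CE$ side is nonzero (it is $\partial_{0}(c_{ij}^{p})=\sum c_{kj}^{p}c_{ik}^{p-l}$, coming from unanchored symplectization disks), while it vanishes on the $CW$ side. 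So no identification of generators can be a chain isomorphism of the associated graded complexes, and your spectral sequence comparison cannot get off the ground as stated. (Your diagnosis of the obstacle -- codimension-one co-cores and extra higher-order contributions -- is also not the real issue; the problem is that all Reeb chords are degenerate when the contact manifold is one-dimensional.)

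What is actually needed, and what the paper does, is a comparison of \emph{cohomologies} at the graded level rather than of generators: (i) compute $H^{*}$ of $(CE^{*}(\partial\Lambda;V),\partial_{0})$ and show it has a basis given by the unconcatable words of short chords (a tensor-product/K\"unneth argument on the span of words with fixed total concatenation); (ii) show that the action-preserving part $\Phi'$ of $\Phi^{1}$, which counts \emph{unanchored} rigid cobordism disks, sends the Reeb chords of $C$ bijectively onto these unconcatable words of short chords. Step (ii) is a genuinely geometric argument: one shows a rigid unanchored cobordism disk has no interior points on the Lagrangian $L$ (ruling out closed preimage loops by local injectivity and preimage arcs between negative punctures by a Stokes/negative-area argument), so the disk lives in the surface cut along $L$, and then a universal-cover plus Riemann-mapping-theorem analysis classifies the rigid disks. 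Only after these two steps does the cone/spectral-sequence argument you outline close the proof. Your proposal is missing both the computation of the $\partial_{0}$-cohomology and the geometric classification of the action-preserving disks, and it replaces them with a generator bijection that is false in this dimension.
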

A proof of the surgery formula in higher dimensions has appeared in
\cite[Appendix B]{EL17}\cite{Ekh19}, and there the map is even an
$A_{\infty}$-isomorphism.  In the two-dimensional case, all Reeb chords are
degenerate, which prevents the map from being a chain level isomorphism. We
prove that it is a quasi-isomorphism by explicitly computing the cohomology of
both sides.  As noted in \cite[Example 7.3]{AE21}, when $\partial \Lambda$
consists entirely of one-point stops, the two-dimensional case follows from
computations done in \cite[Corollary 11]{EL19}.

Using the surgery formula, one can then also describe the higher operations of
the minimal model of $CE^{*}( \partial \Lambda;V)$. We call a word in $CE^{*}(
\partial \Lambda;V)$ \emph{unconcatable} if there are no adjacent Reeb chords
in the word whose respective starting point and endpoint coincide, and
\emph{concatable} otherwise. We call a
chord \emph{short} if it is not the concatenation of any other two chords.
Finally, we call a word $c_k \cdots c_1$ of Reeb chords (of which none is an
idempotent) a \emph{disk sequence} if all adjacent chords in the word have
respective starting points and endpoints which coincide, and if when
concatenating the chords of the word into a single chord, one obtains a chord
of the form $c_{ii}^{1}$ which bounds a disk in $V$. For two short chords $c_2$
and  $c_1$ we define their concatenation $c_2 * c_1$ as the geometric
concatenation to a Reeb chord, if it makes sense, and as $0$ otherwise.

\begin{theorem}
\label{res:zero-dim-leg-min-mod}
	Let $\partial \Lambda$ be a $0$-dimensional Legendrian in the boundary
	of a Weinstein surface $V$ and suppose that none among the co-cores $C
	\subset V$ is null-homotopic relative the boundary $\partial V$. As a
	vector space, $H^{*}CE( \partial \Lambda;V )$ has a basis consisting of
	the unconcatable words of short chords. The $A_{\infty}$-operations
	$\mu_{k}$ of the minimal model structure on $H^{*}CE( \partial
	\Lambda;V )$ act as follows. The multiplication is associative and
	unital, and determined by the relations 
	\begin{align*}
		\mu_{2}( c_2 \otimes c_1 ) = 
		\begin{cases}
			\partial_{-1}( c_{2} * c_{1} ) & \text{if $c_{2}c_{1}$
			is concatable,}\\
			c_{2}c_{1} & \text{if $c_{2}c_{1}$
			is unconcatable,}\\
		\end{cases}
	\end{align*}
	for any short chords $c_2$ and  $c_{1}$, neither of which is an
	idempotent. The higher operations are for $k \ge 3$ given by
	\begin{align*}
		\mu_{k}( c_{k}\otimes\ldots\otimes c_{1}c ) = c,\\
		\mu_{k}( cc_{k}\otimes \ldots \otimes c_{1} ) = c, 
	\end{align*}
	on all tensor products of words of the form $c_{k}\otimes\ldots\otimes
	c_{1}c$ and $cc_{k}\otimes \ldots \otimes c_{1}$ such that $c_{k}\ldots
	c_{1}$ is a disk sequence, and  $c_{1}c$ and $cc_{k}$ are unconcatable,
	and vanish on all other words. Here the chord $c$ is also allowed to be an
	idempotent.

	On the other hand, if $\partial \Lambda$ contains a $0$-sphere
	$\partial \Pi$ for which $C_{\Pi} \subset C$ is null-homotopic relative
	the boundary in $V$, then there is a
	quasi-isomorphism $CE^{*}( \partial \Lambda \setminus \partial
	\Pi;V_{\partial \Pi} ) \isomto CE^{*}( \partial \Lambda;V )$.
\end{theorem}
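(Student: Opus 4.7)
The plan is to apply the preceding surgery quasi-isomorphism $CW^{*}(C;V_{\partial\Lambda}) \isomto CE^{*}(\partial\Lambda;V)$ and transfer the computation to the wrapped Floer side, where the geometry of a Weinstein surface makes everything explicit. I would first set up a Reeb-chord model for $CW^{*}$: on a surface each co-core $C_{i}$ is a properly embedded arc, and after positive wrapping the Lagrangian intersections in $CW^{*}(C_{i},C_{j})$ are in canonical bijection with Reeb chords of $\partial\Lambda$ ending on the $i$-th and $j$-th sphere or stop. Since every Reeb chord factors uniquely as a geometric concatenation of short chords, the resulting chain complex is generated by words of short chords; the differential comes from breaking such a word at a composable adjacent pair, and taking cohomology gives exactly the span of unconcatable words. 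Combined with the surgery formula, this realizes the claimed basis of $H^{*}CE(\partial\Lambda;V)$.

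Next I would identify the transferred $A_{\infty}$-operations by counting immersed polygons in $V$. The product $\mu_{2}$ is concatenation of words: when two short chords $c_{2},c_{1}$ concatenate to a chord $c_{ii}^{1}$ bounding a disk in $V$, the unique such disk produces the idempotent $\partial_{-1}(c_{2}*c_{1})$; otherwise the concatenation is either a non-short chord (hence exact in the minimal model) or undefined, yielding zero in cohomology. For $k\ge 3$, a disk in $V$ bounded by a chord of the form $c_{k}*\cdots *c_{1}=c_{ii}^{1}$ determines an immersed $(k+1)$-gon on the co-cores; designating one side as an output chord $c$ and the other $k$ sides as inputs produces precisely the two stated families
\[
    \mu_{k}(c_{k}\otimes\cdots\otimes c_{1}c)=c,\qquad \mu_{k}(cc_{k}\otimes\cdots\otimes c_{1})=c,
\]
with the unconcatability conditions on $c_{1}c$ and $cc_{k}$ corresponding to the valid boundary decompositions of the polygon. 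All remaining inputs give zero because no polygon is available.

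For the second statement, if the co-core $C_{\Pi}$ of the handle attached at the $0$-sphere $\partial\Pi$ is null-homotopic in $V_{\partial\Pi}$, then this Weinstein handle cancels geometrically against a $0$-handle, producing a Weinstein homotopy from $V_{\partial\Lambda}$ to the sector obtained by attaching only the remaining handles inside $V_{\partial\Pi}$. Invariance of wrapped Floer cohomology under such homotopies, together with the surgery formula applied on both sides, yields the desired quasi-isomorphism $CE^{*}(\partial\Lambda\setminus\partial\Pi;V_{\partial\Pi}) \isomto CE^{*}(\partial\Lambda;V)$.

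The main obstacle I anticipate is the bookkeeping for the higher operations: one must check that for each disk sequence the two polygonal configurations described are the only non-trivial contributions, that cyclic rotations of the disk sequence do not produce further independent terms, and that the multiplicities $p$ in the chord labels $c_{ij}^{p}$ are tracked correctly by the base points, since $p$ determines exactly which disk (if any) is bounded. Once this combinatorial analysis is in place, the minimal-model formulas follow from standard homological perturbation applied through the surgery quasi-isomorphism.
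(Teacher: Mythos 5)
Your overall plan for the first part --- transfer the computation through the surgery quasi-isomorphism and read off the higher operations from polygon counts in the surface --- is the same strategy the paper follows, but two steps in your execution contain genuine gaps. First, you misidentify the wrapped side: the generators of $CW^{*}(C;V_{\partial \Lambda})$ are not in bijection with Reeb chords of $\partial \Lambda$, but with Reeb chords of $\partial C$ in $\partial V_{\partial \Lambda}$, and these correspond (after shrinking the handles, via the universal-cover/Riemann-mapping argument in the proof of \autoref{res:surgery-map-surface}) to \emph{unconcatable words} of short chords; a non-short chord of $\partial \Lambda$ corresponds to no wrapped generator at all. Consequently the complex you describe, ``words of short chords with differential given by breaking at composable pairs,'' is really the $\partial_{0}$-complex on the Chekanov--Eliashberg side (whose cohomology is \autoref{res:short-chords-generate}), not the wrapped complex. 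More seriously, your argument never uses the hypothesis that no co-core is null-homotopic, and this is exactly where it must enter: it guarantees that $\mu_{1}$ vanishes identically on $CW^{*}(C;V_{\partial \Lambda})$ (\autoref{res:wrapped-floer-diff}), so that the wrapped $A_{\infty}$-structure is already minimal and the surgery quasi-isomorphism identifies it with the minimal model of $CE^{*}(\partial \Lambda;V)$ with no homotopy-transfer correction terms; it is also what makes the short chords $\partial$-cycles on the CE side (otherwise $\partial_{-1}$ of a short chord can be an idempotent). Without this vanishing, your appeal to ``standard homological perturbation'' would produce tree-level corrections and the clean polygon formulas would need separate justification. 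The passage from the $\partial_{0}$-computation to the full cohomology also needs the action-filtration/spectral-sequence step, which you omit.

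For the last statement your mechanism is not the right one: attaching the handle at $\partial \Pi$ first or last produces literally the same sector $V_{\partial \Lambda}$, so there is no ambient Weinstein homotopy to invoke --- what changes is the \emph{collection of Lagrangians}, namely whether the co-core $C_{\Pi}$ is included. The point is that a null-homotopic co-core bounds a disk together with a boundary Reeb chord, so its idempotent is exact and the inclusion $CW^{*}(C\setminus C_{\Pi};V_{\partial \Lambda})\hookrightarrow CW^{*}(C;V_{\partial \Lambda})$ is a quasi-isomorphism (\autoref{res:wrapped-floer-diff}); to convert this into the asserted map $CE^{*}(\partial \Lambda\setminus\partial\Pi;V_{\partial\Pi})\to CE^{*}(\partial\Lambda;V)$ one further needs the homotopy-commutative square of \autoref{res:commutative-square} comparing the surgery maps with the cobordism map, together with \autoref{res:surgery-map-surface} for both collections. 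Your ``cancellation against a $0$-handle'' is neither available in general (the disk cut off by the null-homotopy need not be a $0$-handle of $V$) nor sufficient, since it does not produce the required map of Chekanov--Eliashberg algebras.
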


\subsection{Finitely generated Chekanov--Eliashberg dg-algebras for singular
Legendrians}

We here state our main result. Let $V$ be a Weinstein hypersurface in $\R^{3}$
with singular Legendrian skeleton $\Lambda$.  Recall that a Weinstein
hypersurface is a hypersurface $V \subset (\R^3,\ker (dz-ydx))$ such that there
exists a contact form $e^f(dz-ydx)$ that restricts to a Liouville form on $V$
that is compatible with a Weinstein structure. In \cite[Section 2]{Eli18}, it
was shown that the skeleton of a Weinstein hypersurface is independent on the
choice of contact form (as long as the restriction is Liouville).  We assume
that the skeleton is smooth away from the subcritical part $V_{0} \subset V$,
where the latter is a union of balls. Further we assume that the skeleton
consists of the cone of a finite number of points in the boundary of $V_{0}$.
We let $\partial \Lambda \subset \partial V_{0}$ be the attaching spheres in
the handle decomposition of $V$.  The Chekanov--Eliashberg dg-algebra $CE^{*}(
\Lambda;V_{0};\R^{4})$ is, as an associative algebra, isomorphic to the path
algebra over $\bf{k}$ of the quiver with one idempotent for each top handle of
$V$, one arrow for each Reeb chord of $\Lambda$ in $\R$, and one arrow for each
Reeb chord of $\partial \Lambda$ in $\partial V_0$. Note that the algebra is
infinitely generated. The differential is defined by counting
pseudo-holomorphic curves in the symplectization. In $\R^{3}$, it can be
explicitly computed by counting admissible disks in the Lagrangian projection
\cite[Section 7.1]{AE21}, similarly to in the classical definition due to
Chekanov \cite{Che02}.

\begin{figure}[!htb]
    \centering
    
    \import{./figures/}{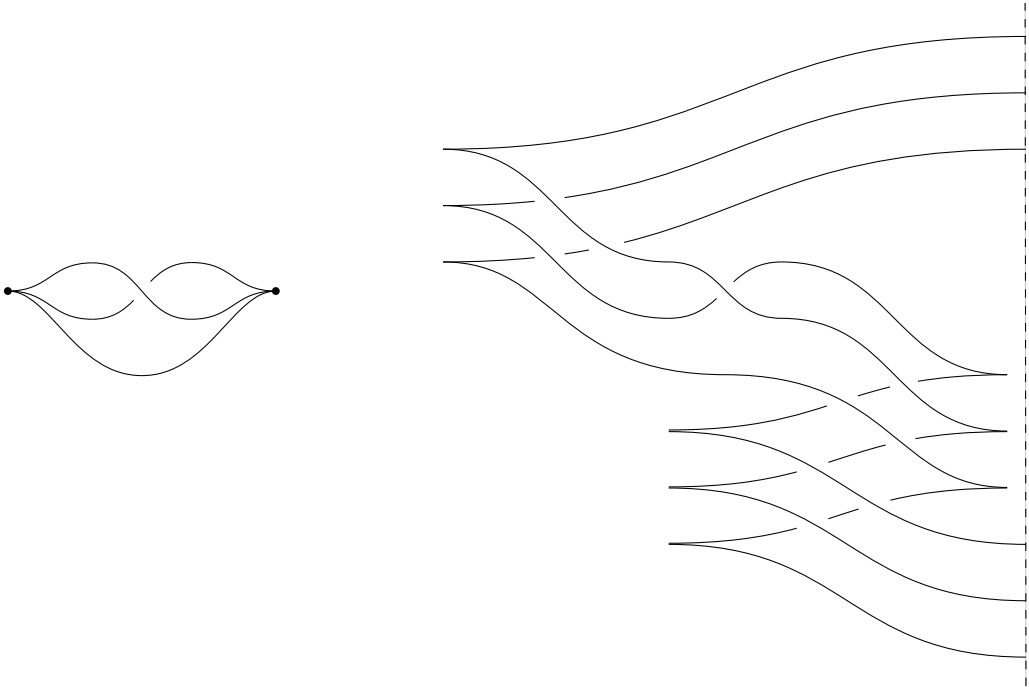}

    \caption{To the left is the front projection of a singular Legendrian
	    $\Lambda$ and to the right is
    its bordered Legendrian resolution $\Lambda^{\bullet}$.}
    \label{fig:intro-resolution}
\end{figure}

Our main theorem states that for each singular Legendrian $\Lambda$, there is a
so-called \emph{bordered} or open Legendrian $\Lambda^{\bullet}$ in the sense
of \cite{Siv11, ABS22, ABS19, ABK22}, with Chekanov--Eliashberg dg-algebra
quasi-isomorphic to that of $\Lambda$. We call $\Lambda^{\bullet}$ the
\emph{resolution} of $\Lambda$; see
\autoref{def:resolution}. We need to warn the reader $\Lambda^{\bullet}$ is not
canonically defined up to compactly supported Legendrian isotopy. Despite this,
however, we can associate an invariant $CE^{*}( \Lambda^{\bullet};\R^{4} )$ to
$\Lambda^{\bullet}$ which is quasi-isomorphic to the Chekanov--Eliashberg
dg-algebra of $\Lambda$.  In the front projection, $\Lambda^{\bullet}$ is
constructed by first moving $\Lambda$ by an isotopy into a position such that
the singularities all have the same $x$-coordinate, and the rest of the
Legendrian lies to the right of the singularities. We then replace each
singularity with a negative half-twist as in \autoref{fig:intro-resolution}.
It is immaterial in which order the twists are performed, and if the ends of
the twists go above or below the Legendrian.  
\begin{theorem}[\autoref{res:resolution}]
\label{res:intro-res}
	Let $\Lambda \subset \R^{3}$ be a singular Legendrian. There is a
	quasi-isomorphism
	 \[
		 CE^{*}( \Lambda;V_{0};\R^{4} ) \cong 
		 CE^{*}( \Lambda^{\bullet};\R^{4} ).
	\] 
\end{theorem}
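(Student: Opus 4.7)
The plan is to construct an explicit dg-algebra morphism $\Phi : CE^*(\Lambda;V_0;\R^4) \to CE^*(\Lambda^\bullet;\R^4)$ modeled on the local picture of the half-twist, and then show it is a quasi-isomorphism by a filtration argument that reduces the question to the internal algebra computation of \autoref{res:zero-dim-leg-min-mod}. Recall that generators of $CE^*(\Lambda;V_0;\R^4)$ split into external chords of $\Lambda$ in $\R^3$ and internal chords of $\partial\Lambda$ in $\partial V_0$. On the other side, after performing the resolution the Lagrangian projection of $\Lambda^\bullet$ has two kinds of crossings: crossings coming from the original $\Lambda$-chords away from singularities, and new crossings produced by each half-twist. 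I would define $\Phi$ by sending each external chord to its manifest counterpart in $\Lambda^\bullet$, and each short internal chord $c_{ij}^p$ to the word of new half-twist crossings obtained by tracing the counter-clockwise arc through the resolved picture. Longer internal chords are then images of products of short ones, so $\Phi$ extends uniquely as a unital algebra map.

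Next I would verify that $\Phi$ is a chain map by analyzing holomorphic disks. Away from the singularities nothing changes, so the external differential is transported by $\Phi$ essentially tautologically. The work is local near each singular point: one stretches the neck along a small sphere surrounding the singularity so that every rigid disk in $\Lambda$ decomposes into an external piece and a piece living in the symplectization end of $\partial V_0$, the latter being exactly the disks counted by the internal differential $\partial = \partial_0 + \partial_{-1}$. The parallel statement on the $\Lambda^\bullet$ side is that the rigid disks passing through a half-twist region are a combinatorial model for the same broken configurations: the admissible disks confined to the half-twist reproduce $\partial_0$, while the triangular disks bounded by a single half-twist crossing and its capping arc reproduce $\partial_{-1}$. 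Matching these two descriptions crossing-by-crossing shows $\Phi \circ \partial_{\Lambda} = \partial_{\Lambda^\bullet}\circ\Phi$.

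To promote $\Phi$ to a quasi-isomorphism, I would introduce the filtration by the number of half-twist crossings appearing in a word (equivalently, the number of times the underlying path winds through a singular region). Both sides inherit this filtration — on the $\Lambda$ side via the $\partial_{-1}$-grading on internal chords, on the $\Lambda^\bullet$ side from the geometric count. The induced spectral sequences have $E_1$-pages that on each side are controlled purely by the internal/half-twist region, and \autoref{res:zero-dim-leg-min-mod} together with \autoref{res:surgery-map-surface} identifies both with the cohomology of the internal algebra (or its bordered analogue) in matching bases of unconcatable words of short chords. Comparing on $E_1$ gives an isomorphism of $E_1$-pages; by a standard convergence and comparison argument for bounded filtrations on semi-free dg-algebras, $\Phi$ is a quasi-isomorphism on $E_\infty$ and hence on cohomology.

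The hard part will be the local disk correspondence in the second step. The half-twist configuration produces many short Reeb chords that can combine into a rapidly growing number of admissible disks, and one needs a precise bijection (with multiplicities) between these disks and the broken curves obtained by neck-stretching along the Weinstein surface in $\partial V_0$. Handling this cleanly, especially keeping track of the distinction between $\partial_0$-type and $\partial_{-1}$-type contributions coming from disk-bounding chords of the form $c_{ii}^1$, is where care is needed; the rest of the argument is then bookkeeping via the explicit model of \autoref{res:zero-dim-leg-min-mod}.
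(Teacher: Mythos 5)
Your starting observation is the right one---the crossings of a negative half-twist match the internal chords $c_{ij}^{0}$ and their quadratic differentials match $\partial_{0}$---but the map you build out of it does not exist, and the direction you chose is the source of the problem. First, a structural point: in $CE^{*}(\Lambda;V_{0};\R^{4})$ every internal chord $c_{ij}^{p}$ is a \emph{free generator}; a long chord is not the product of the short chords it concatenates (concatenation $c_{2}*c_{1}$ is a different generator, while $c_{2}c_{1}$ is a word), so $\Phi$ is not determined by its values on short chords and must be specified on all $c_{ij}^{p}$, including the infinitely many with $p\geq 1$. These winding chords have no counterpart in the resolved picture: once the singularity is replaced by a half-twist there is no boundary circle of $V_{0}$ left to wind around, so ``tracing the counter-clockwise arc'' is undefined for the short chord that crosses the base point. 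Worse, no assignment valued in the subalgebra generated by the half-twist crossings can make $\Phi$ a chain map: a disk-bounding chord satisfies $\partial c_{ii}^{1}=e_{i}+\sum\pm c_{ki}^{1-l}c_{ik}^{l}$, so the chain-map identity forces $\partial\Phi(c_{ii}^{1})$ to contain the idempotent $e_{i}$, while every differential in the crossing subalgebra is purely quadratic in the crossings; and there is no canonical repair (for $\Lambda_{A}^{n}$, for instance, the idempotents are nontrivial in $H^{*}CE(\Lambda_{A}^{n,\bullet};\R^{4})$). The same missing chords break your spectral-sequence step: on the associated graded of the winding filtration the internal side has $\partial_{0}$-cohomology classes represented by unconcatable words containing the base-point-passing short chords (\autoref{res:short-chords-generate}), which have no partners among half-twist crossings, so the $E_{1}$-pages are not identified by your correspondence; nor is $E_{1}$ ``controlled purely by the internal region,'' since the external chords and their differentials survive there.

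The paper's proof avoids all of this by reversing the direction of comparison, and this reversal is the essential idea you are missing. One first isotopes $\Lambda$ so that all singularities become left singularities (a step your argument also skips: the resolution at a right singularity uses two half-twists and is handled by Reidemeister VI moves). One then attaches auxiliary strands $\Omega$ at the singularities and considers the finitely generated \emph{stopped subalgebra} $CE^{*}(\Lambda;V_{0,\partial\Omega};\R^{4})$, which by definition contains only the non-winding internal chords $t_{ij}^{0}$; this subalgebra is literally isomorphic to $CE^{*}(\Lambda^{\bullet};\R^{4})$ via your crossing/chord matching. The whole content is then \autoref{res:finite-stopped-models}: the canonical inclusion of the stopped subalgebra into $CE^{*}(\Lambda;V_{0};\R^{4})$ is a quasi-isomorphism, proved by (i) an action-filtration argument reducing to \autoref{res:short-chords-generate} (\autoref{res:stopped-subalgebra-prop}), and (ii) removing the $\Omega$-strands geometrically, using the isotopy of \autoref{res:push-twist} to create chords $a_{kk}$ with $\partial a_{kk}=\pm e_{k}$ and then the algebraic exactness lemma \autoref{res:exact-removal} (\autoref{res:stopped-lemma}). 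In particular no neck-stretching or new holomorphic-disk bijection is needed; the analytic ``hard part'' you flag is not where the difficulty lies, and the difficulty that does arise (the winding chords and the $\partial_{-1}$ terms) is precisely what the inclusion-of-a-subalgebra formulation is designed to sidestep.
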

 
The Chekanov--Eliashberg dg-algebra of $\Lambda^{\bullet}$ has one generator for
each crossing and right cusp in the front, and the differential is given by
counting disks, just as for ordinary compact Legendrians. In particular,
$CE^{*}( \Lambda^{\bullet};\R^{4} )$ is finitely generated.

If $\Lambda$ only has one singularity, and the rest of $\Lambda$ lies to the
left of the singularity in the front projection, there is a related
construction which gives an even simpler bordered Legendrian $\Lambda^{\circ}$,
which we call the \emph{opening} of $\Lambda$; see \autoref{def:opening}.  In
the front, $\Lambda^{\circ}$ is obtained from $\Lambda$ by removing the
singularity and separating the strands, as in \autoref{fig:intro-opening}.
\begin{theorem}[\autoref{res:opening}]
\label{res:intro-opening}
	Let $\Lambda \subset \R^{3}$ be a singular Legendrian with only one
	singularity, such that the rest of $\Lambda$ lies to the left of the
	singularity in the front projection. Then there is a quasi-isomorphism
	 \[
		 CE^{*}( \Lambda;V_{0};\R^{4} ) \cong 
		 CE^{*}( \Lambda^{\circ};\R^{4} ).
	\] 
\end{theorem}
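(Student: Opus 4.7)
The plan is to deduce this theorem from the resolution theorem (Theorem~\ref{res:intro-res}) by directly comparing the bordered Legendrians $\Lambda^{\bullet}$ and $\Lambda^{\circ}$. Since the unique singularity of $\Lambda$ lies to the right of the rest of the front, $\Lambda^{\bullet}$ and $\Lambda^{\circ}$ agree to the left of the singularity's $x$-coordinate; to the right, $\Lambda^{\bullet}$ has a negative half-twist of the $n$ strands meeting at the singularity followed by $n$ open ends, while $\Lambda^{\circ}$ simply has $n$ open ends. The twist contributes precisely $\binom{n}{2}$ additional crossings and no additional cusps, so as a path algebra $CE^{*}(\Lambda^{\circ};\R^{4})$ is the quotient of $CE^{*}(\Lambda^{\bullet};\R^{4})$ by the ideal generated by these $\binom{n}{2}$ twist generators.

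The first step is to apply Theorem~\ref{res:intro-res} to obtain a quasi-isomorphism
\[
    CE^{*}(\Lambda;V_{0};\R^{4}) \isomto CE^{*}(\Lambda^{\bullet};\R^{4}),
\]
reducing the problem to producing a quasi-isomorphism $CE^{*}(\Lambda^{\bullet};\R^{4}) \isomto CE^{*}(\Lambda^{\circ};\R^{4})$. The candidate map is the projection that sends each twist crossing to zero and fixes all other generators, once one checks via a local analysis of admissible disks in the twist region that this is a chain map.

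The second step is to prove the projection is a quasi-isomorphism by a Chekanov-style cancellation internal to the twist. Ordering the twist crossings $a_{ij}$ (for $1 \le i < j \le n$) by their depth in the twist, one finds small bigons between adjacent crossings, giving explicit canceling pairs in the differential. Filtering $CE^{*}(\Lambda^{\bullet};\R^{4})$ by the number of twist generators appearing in each word and analysing the associated graded then shows that the subcomplex spanned by these generators is acyclic, so that the projection induces an isomorphism on cohomology.

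The main obstacle is ensuring that, after cancellation, the induced differential on the surviving complex matches the differential of $CE^{*}(\Lambda^{\circ};\R^{4})$ exactly. One must rule out that some admissible disk for $\Lambda^{\bullet}$ with boundary traversing the twist contributes a term without an analogue in $\Lambda^{\circ}$. Geometrically, I expect each such disk either to appear in a canceling pair inside the twist region, or to correspond to a disk for $\Lambda^{\circ}$ whose boundary exits at the open ends in place of entering the twist. The careful geometric verification of this correspondence, in particular handling boundary arcs that wind through the twist multiple times, is the most delicate part of the argument.
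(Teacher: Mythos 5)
There is a genuine gap, and it starts with the object you compare to $\Lambda^{\circ}$. In the situation of this theorem the singularity is a \emph{right} singularity (everything else lies to its left), and by \autoref{def:resolution} the resolution there is a \emph{positive half-twist followed by a negative half-twist}, not a single negative half-twist with $\binom{n}{2}$ crossings and no cusps; the positive half-twist is drawn with cusps and contributes further generators. So \autoref{res:resolution} does not apply to the bordered Legendrian you describe, and in fact your step 1 already fails in the simplest example: for the unknot with one top handle and the singularity on the right (\autoref{ex:unknot}), one has $CE^{*}(\Lambda;V_{0};\R^{4})\simeq \mathbf{k}$, while ``left-cusp arc, one twist crossing, open ends'' has a single generator $t$ whose differential is either $0$ or the idempotent, so its cohomology is either a free algebra on $t$ or zero --- in neither case $\mathbf{k}$, and in the latter case the ideal $(t)$ is not even closed under $\partial$, so your projection is not a chain map.

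The deeper problem is with the cancellation step itself. In a genuine negative half-twist the differentials are purely quadratic, $\partial a_{ij}=\sum_{i<k<j}\pm a_{kj}a_{ik}$: there are no bigons, the $a_{i,i+1}$ are non-exact cocycles, and the span of the twist generators is not acyclic. More structurally, any argument that is local to the twist region cannot work, because it never uses the hypothesis that the singularity being opened is a right singularity with the rest of $\Lambda$ entirely to its left; the same local argument would let you open at a left singularity, or at two singularities simultaneously, which is false --- the paper warns of exactly this after \autoref{res:opening}, and the $\theta_{n}$-Legendrian is a counterexample (opening at both singularities gives $\mathbf{k}^{n}$, whereas the true cohomology is the non-formal algebra of \autoref{res:theta-minimal}). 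The paper's proof uses the side hypothesis globally and geometrically: since everything lies to one side, \autoref{res:push-twist} can be applied twice to push the entire remaining tangle through the positive and then the negative half-twist, giving a Legendrian isotopy $\Lambda^{\bullet,t}\sim\Lambda^{\circ,t}$, after which invariance of the bordered Chekanov--Eliashberg algebra together with \autoref{res:resolution} concludes. If you want an algebraic route, you would have to work with the correct resolution (including the cusp generators coming from the positive half-twist, whose differentials contain idempotent terms and permit removals in the spirit of \autoref{res:exact-removal}), not with a quotient by the crossings of a single negative half-twist.
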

\begin{figure}[!htb]
    \centering
    
    \import{./figures/}{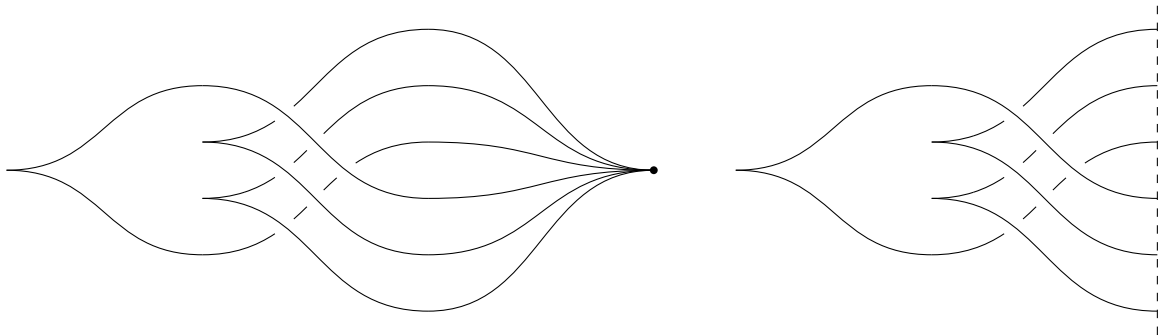}

    \caption{To the left is the front projection of a singular Legendrian
	 $\Lambda$ and to the right is its bordered Legendrian opening
    	$\Lambda^{\circ}$.}
    \label{fig:intro-opening}
\end{figure}
In Section 5 we explore a number of examples where the Chekanov--Eliashberg
dg-algebra of $\Lambda^{\circ}$ is simple enough to allow one to compute the
minimal $A_{\infty}$-model. 

\subsection*{Acknowledgments}
This paper is based on the author's bachelor and master theses, which were
supervised by Georgios Dimitroglou Rizell. The author would like to express his
gratitude to him for introducing him to this research area, many helpful
discussions, and for his careful reading of earlier drafts of this paper. The
author is supported by the grant KAW 2021.0300 from the Knut and Alice
Wallenberg Foundation.

\section{Chekanov--Eliashberg dg-algebras for smooth Legendrians}
In this section, we first give a brief overview of the construction of the
Chekanov--Eliashberg dg-algebra for smooth Legendrians, the wrapped Floer
cohomology, and the surgery and cobordism maps.  This material follows
\cite{BEE12,EL17}, where details can be found.  We then compute the cohomology
of the Chekanov--Eliashberg dg-algebra in the boundary of a surface, which
finishes the proof of the surgery formula in dimension two.

\subsection{Smooth Chekanov--Eliashberg dg-algebras and wrapped Floer cohomology}
\label{ssec:smooth-ce}
Let $W$ be a Weinstein $2n$-manifold, with subcritical part $W_{0}$, and let
$\Lambda$ be a smooth Legendrian link of $(n-1)$-spheres living in the ideal
contact boundary $\partial W$ of $W$. Write $W_{\Lambda}$ for the Weinstein
manifold obtained by attaching critical Weinstein $n$-handles to $W$ at
$\Lambda$, and $C$ for the co-cores of the handles. We refer to \cite{CE12} for
details.  

\subsubsection{The Chekanov--Eliashberg dg-algebra} 
The Chekanov--Eliashberg dg-algebra $CE^{*}(\Lambda;W)$ of $\Lambda$ is the
semi-free dg-algebra generated by the Reeb chords of $\Lambda$, with one
idempotent for each component of $\Lambda$. We use cohomological grading and
define the degree of a chord $c$ of $\Lambda$ to be $|c|=-\text{CZ}( c )+1$,
where $\text{CZ}( c )$ is the Conley-Zehnder index. The differential $\partial
c$ is given by counting rigid (modulo translations along the $\R$-coordinate)
pseudo-holomorphic disks in the symplectization $\R\times \partial W$ of
the contact boundary $\partial W$, where the disk has boundary on the
cylindrical Lagrangian submanifold $\R \times \partial \Lambda$, taking $c$ as
input, which is the asymptotic Reeb chord at the positive end, and outputting a
word $c_{k}\ldots c_1$ of chords, which are the asymptotics at the negative
end. The disks are allowed to have anchors in $W$. An anchor in $W$ is an
interior puncture of the disk with a Reeb orbit
asymptotic at the negative end, along with an embedding of a rigid
pseudo-holomorphic punctured Riemann sphere $\C \to W$, such that the puncture
of the Riemann sphere converges the same Reeb orbit as the interior puncture of
the disk.  It follows from the dimension formula in \cite{CEL10} that
$|\partial|=1$. To see that $\partial^{2}=0$ one uses a standard
SFT-compactness argument by studying the $1$-dimensional moduli spaces of disks
taking $c$ as input and sees that they split at infinity into two-level
buildings of rigid disks corresponding to terms of $\partial^{2}$. See
\cite{EL17,Ekh19} for details. 

\subsubsection{Wrapped Floer cohomology}
The wrapped Floer cohomology of the co-cores $C$, denoted by $CW^{*}(
C;W_{\Lambda})$, is an $A_{\infty}$-algebra whose underlying module is freely
generated by the Reeb chords of the Legendrian $\partial C$ in the ideal
contact boundary $\partial W_{\Lambda}$, along with one self-intersection point
for each component of $C$. We here use the version of the wrapped Floer
cohomology without Hamiltonian from \cite[Appendix B]{EL17}.  The
self-intersections arise from choosing a system of parallel copies of $C$, see
\cite[Section 3.3]{EL17}.  The $A_{\infty}$-operations $\mu_{k}$ are defined as
follows. Write $\mu_{k} = \mu'_{k} + \mu_{k}''$ where $\mu'_{k}$ and
$\mu''_{k}$ are the components of $\mu_{k}$ that takes values in the
self-intersection point and Reeb chords, respectively. We define $\mu'$ on a
word $a_{k}\ldots a_1$ of generators of
$CW^{*}( C;W_{\Lambda})$ by counting rigid pseudo-holomorphic disks in
$W_{\Lambda}$ with boundary on $C$ taking $a_k \ldots a_1$ as input and
outputting one intersection point. We call these disks \emph{filling disks}.
Here the disk has positive asymptotics at the
Reeb chords $a_i$.  Strictly speaking, the count is performed for a
perturbation of $C$ which gives rise to a single self-intersection in the
interior of each component of $C$. If $C$ is embedded, then these are the only
self-intersection points. We define $\mu''$ by counting \emph{partial
holomorphic buildings}. These are buildings consisting of one \emph{primary
disk} in $\R\times \partial W_{\Lambda}$ with boundary on $\R \times \partial
C$ with some number of positive and negative punctures. The primary disk must
have at least two negative Reeb chord asymptotics (if it is a single strip,
then it is not of correct dimension). We call one of the negative punctures of
the primary disk the \emph{distinguished puncture} and consider it to be the
output of the building.  For each of the other negative punctures, the building
also has a \emph{secondary disk} in $W_\Lambda$ which has boundary on $C$, with
one puncture of the secondary disk converging to the corresponding output of
the primary disk. For a word $a_{k}\ldots a_1$ of generators of $CW^{*}(
C;W_{\Lambda})$ we then define $\mu_{k}''( a_{k}\ldots a_1 )$ by counting rigid
(modulo translation along the $\R$-coordinate for the primary disk) partial
holomorphic buildings, which have $a_{k}\ldots a_1$ as input, i.e. positive
asymptotics, when going counter-clockwise the around the entire building,
starting at the distinguished puncture.

\subsubsection{The surgery and cobordism maps}
The Chekanov--Eliashberg dg-algebra and the wrapped Floer cohomology are related
by the surgery formula of Bourgeois--Ekholm--Eliashberg \cite{BEE12} and
Ekholm--Lekili \cite{EL17}.

\begin{theorem}[\cite{BEE12,EL17}]
\label{res:surgery-map}
	There is an $A_{\infty}$-quasi-isomorphism	
	\[
		\Phi_{\Lambda}: CW^{*}( C;W_{\Lambda} ) \isomto CE^{*}( \Lambda;W ).
	\]
\end{theorem}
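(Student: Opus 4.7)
The plan is to construct the surgery map $\Phi_{\Lambda}$ geometrically by counting pseudo-holomorphic curves, and then prove it is an $A_\infty$-quasi-isomorphism via a combination of SFT compactness and an action-filtration argument, essentially following the strategy of \cite{BEE12, EL17}.

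First I would analyze the Reeb dynamics of $\partial C$ in $\partial W_{\Lambda}$ using a standard model for the critical Weinstein $n$-handle. In the neck-stretching limit in which the handle is elongated to infinite length, each Reeb chord of $\partial C$ decomposes into a word $c_{k}\ldots c_{1}$ of Reeb chords of $\Lambda$ in $\partial W$ connected by explicit ``braid-like'' trajectories inside the handle, together with the self-intersections of $C$ in the handle core (the idempotent generators). This gives a bijection between generators of $CW^{*}(C; W_{\Lambda})$ and words of Reeb chords of $\Lambda$ (plus idempotents). The map $\Phi_{\Lambda}$ is then defined on the generator corresponding to a word $w$ by counting rigid pseudo-holomorphic buildings in the stretched manifold consisting of a primary curve in $\R\times\partial W$ with positive asymptotics on $w$, possibly decorated with secondary filling disks in $W$, together with a single connecting curve in the handle region that converges to the Reeb chord of $\partial C$ corresponding to $w$. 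The higher multilinear components $\Phi_{\Lambda}^{k}$ arise by allowing several positive inputs on the primary level.

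To check the $A_\infty$-morphism equations I would study the boundaries of the relevant $1$-dimensional moduli spaces. Standard SFT compactness forces the degenerations to correspond exactly to either applying a wrapped-Floer $\mu^{k}$ followed by $\Phi_{\Lambda}$, or splitting the inputs across several copies of $\Phi_{\Lambda}^{k}$ followed by the differential in $CE^{*}(\Lambda;W)$. The quasi-isomorphism property would then be established by filtering both sides by the total action of Reeb chords, observing that $\Phi_{\Lambda}$ is filtered, and verifying that on the associated graded it sends each basic generator to $\pm$ the corresponding Reeb chord word plus strictly lower-action terms; hence the induced map on the $E_{0}$-page of the resulting spectral sequence is an isomorphism, from which the quasi-isomorphism statement follows by a standard comparison argument.

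The main obstacle is the SFT-technical input: establishing transversality for the relevant moduli spaces in the stretched cobordism and precisely controlling the contribution of the model curves inside the handle, so that the ``leading order'' of $\Phi_{\Lambda}$ truly matches the combinatorial bijection above. This is the point at which \cite{BEE12} and \cite{EL17} differ, the latter upgrading $\Phi_{\Lambda}$ to an actual $A_\infty$-isomorphism via a Koszul-duality perspective. As emphasized in the introduction, the dimension-two case falls outside the scope of either of these arguments because all Reeb chords of a $0$-dimensional Legendrian $\partial\Lambda$ are degenerate, so in that setting one is forced to compute both sides by hand rather than rely on the generic SFT framework — this is precisely the gap that the present paper fills.
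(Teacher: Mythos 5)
Your proposal is correct and follows essentially the same route as the paper: for $n>1$ the paper's proof is simply a citation of \cite[Theorem 2]{EL17}, whose cobordism-disk construction, SFT-compactness argument for the $A_\infty$-relations, and action-filtration/leading-order argument you sketch accurately. For $n=1$ the paper likewise defers to its own \autoref{res:surgery-map-surface} (the explicit computation of both sides via unconcatable words of short chords), which is exactly the degenerate-chord issue and hands-on computation you identify in your final paragraph as the gap filled by the present paper.
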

\begin{proof}
	For $n > 1$, this is \cite[Theorem 2]{EL17}. For $n=1$, it is a special
	case of \autoref{res:surgery-map-surface}
\end{proof}
The surgery map $\Phi_{\Lambda}$ is defined by considering the Weinstein
cobordism $W^{\circ}_{\Lambda} = (\R\times \partial W )_{\Lambda}$, obtained by
attaching Weinstein $n$-handles to the symplectization $\R\times \partial W$ at
$\Lambda$, where $\Lambda$ is viewed as living in the positive end of the
$\R\times \partial W$. The ends of the cobordism are then $\partial_{+}
W^{\circ}_{\Lambda} = \partial W_{\Lambda}$ and $\partial_{-}
W^{\circ}_{\Lambda} = \partial \Lambda$.  Each map
\[
	\Phi_{\Lambda}^{k}:CW^{*}( C;W_{\Lambda})^{\otimes k} \to CE^{*}( \Lambda;W )
	\quad k > 0
\]
of the $A_{\infty}$-homomorphism $\Phi_{\Lambda}$ is then defined by counting
rigid pseudo-holomorphic cobordism disks in $W_{\Lambda}^{\circ}$, as in
\autoref{fig:cobordism-disk}. For details, see \cite{EL17}.  
\begin{figure}[!htb]
    \centering
    
    \import{./figures/}{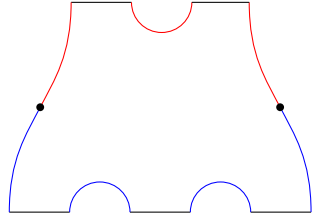}

    \caption{Example of a disk contributing to the output of
    $\Phi_{\Lambda}^{2}$. The black segments are boundary punctures converging to
    Reeb chords of $\Lambda$ and $\partial C$. The red and blue segments are
    mapped to to $C$ and $L$ respectively. The black dots are mapped to $C \cap
    L$.}
    \label{fig:cobordism-disk}
\end{figure}

\begin{remark}
Though not explicitly stated, it is clear from the proof given in \cite{EL17}
that when $n > 1$, the map is in fact an $A_{\infty}$-\emph{isomorphism}, at
least this can be assumed to hold when the maps is restricted to the generators
below any fixed action level. (The
final map is defined as a limit of such maps.) This is not true when $n=1$, the
problem being that the low dimension makes all Reeb chords degenerate. However,
it is still true that the map is a quasi-isomorphism, which we prove in 
\autoref{ssec:surgery-surfaces}.  
\end{remark}
There is a closely related result which we will call the cobordism formula.
Consider a subset $\Pi$ of the attaching spheres in $\Lambda$, and the
Chekanov--Eliashberg dg-algebra of $CE( \Lambda \setminus \Pi;W_{\Pi} )$ of the
remaining part of $\Lambda$ after attaching handles at $\Pi \subset \Lambda$.
Note that $\Lambda \setminus \Pi$ is considered as a Legendrian inside the
boundary of the Weinstein manifold $W_{\Pi}$.

\begin{theorem}{\cite[Theorem 5.1 when $n > 1$]{BEE12}}
\label{res:cobordism-map}
	Let $\Pi$ be a subset of the spheres in $\Lambda$.  Then there is a
	dg-algebra morphism
	\[
		\Psi_{\Pi}: CE^{*}( \Lambda \setminus \Pi ;W_{\Pi} ) \to 
		CE^{*}( \Lambda;W )
	\]
	which is a quasi-isomorphism onto the subalgebra $CE^{*}(
	\Lambda;W)[\Lambda \setminus \Pi,\Lambda \setminus \Pi]$, consisting of
	all word of Reeb chords whose initial and terminal vertices are both
	contained in $\Lambda \setminus \Pi$. 
\end{theorem}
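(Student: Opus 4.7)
The plan is to define $\Psi_\Pi$ by a cobordism count analogous to the definition of the surgery map $\Phi_\Lambda$ from \autoref{res:surgery-map}, except that Weinstein $n$-handles are attached only at the subset $\Pi \subset \Lambda$. Concretely, consider the Weinstein cobordism $(\R \times \partial W)_\Pi$ obtained from the symplectization by attaching critical handles at $\Pi$, viewed as lying in the positive end. Its negative end is $\partial W$ containing $\Lambda$, and its positive end is $\partial W_\Pi$ containing $\Lambda \setminus \Pi$. For a Reeb chord $c$ of $\Lambda \setminus \Pi$ in $\partial W_\Pi$, set $\Psi_\Pi(c)$ to be the count of rigid pseudo-holomorphic cobordism disks in $(\R \times \partial W)_\Pi$ with one positive asymptotic at $c$, negative asymptotics at Reeb chords of $\Lambda$ in $\partial W$, and boundary on the natural Lagrangian cobordism with negative end $\Lambda$ and positive end $\Lambda \setminus \Pi$ (that is, the trace of the handle attachment, which caps off $\Pi$ by the cores). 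Extend multiplicatively to all words.

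That $\Psi_\Pi$ is a chain map is the usual SFT compactness argument: the boundary of the one-dimensional moduli of cobordism disks with positive puncture at $c$ splits into two-level buildings consisting of either a rigid disk at the positive end in $\R \times \partial W_\Pi$, whose negative output is fed into $\Psi_\Pi^{\otimes}$, contributing to $\Psi_\Pi(\partial c)$, or a rigid cobordism disk followed by a rigid disk at the negative end in $\R \times \partial W$, contributing to $\partial \Psi_\Pi(c)$. That $\Psi_\Pi$ lands inside the subalgebra $CE^*(\Lambda;W)[\Lambda \setminus \Pi,\Lambda \setminus \Pi]$ follows immediately from tracing the boundary of any contributing disk: the arcs between the positive puncture and the first and last negative punctures lie on the cylindrical part of $\Lambda \setminus \Pi$, forcing the initial and terminal vertices of the output word into $\Lambda \setminus \Pi$.

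To show that $\Psi_\Pi$ is a quasi-isomorphism onto this subalgebra, the plan is to neck-stretch along a copy of $\partial W$ separating the lower symplectization from the region of attached handles. In the stretched limit each contributing disk decomposes into a handle piece in the upper region, producing a chord of $\Lambda \setminus \Pi$ in $\partial W_\Pi$, together with rigid disks in $\R \times \partial W$ at the bottom ending on chords of $\Lambda$. Filtering both sides by the number of times a word enters $\Pi$, the associated graded of $\Psi_\Pi$ becomes the tautological identification of chords in $\partial W_\Pi$ with concatenations of chords of $\Lambda$ whose intermediate endpoints alternate between $\Pi$ and $\Lambda \setminus \Pi$, giving an isomorphism on the $E_1$-page of the resulting spectral sequence. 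The main obstacle is controlling the neck-stretched moduli in dimension $n=1$, where every Reeb chord is degenerate and the higher-dimensional transversality of \cite{BEE12} no longer applies; one can sidestep this analytic step by combining the surgery quasi-isomorphism \autoref{res:surgery-map-surface} for both $W$ and $W_\Pi$ with naturality of the cobordism maps, reducing the claim to the essentially algebraic observation that the inclusion $CW^*(C \setminus C_\Pi;W_\Lambda) \hookrightarrow CW^*(C;W_\Lambda)$ is a quasi-isomorphism onto the sub-$A_\infty$-algebra generated by chords and intersection points with endpoints on $C \setminus C_\Pi$.
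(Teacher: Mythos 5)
Your construction of $\Psi_{\Pi}$ (handle attachment at $\Pi$ only, Lagrangian cobordism capping off $\Pi$ by the cores, standard SFT count), the chain map argument, and the observation that the image lies in $CE^{*}(\Lambda;W)[\Lambda\setminus\Pi,\Lambda\setminus\Pi]$ all agree with the paper. For the quasi-isomorphism, the route you call a ``sidestep'' for $n=1$ is in fact the paper's proof, applied uniformly for all $n>0$: one combines the surgery quasi-isomorphisms (\autoref{res:surgery-map} for $n>1$, \autoref{res:surgery-map-surface} for $n=1$) with the fact that $CW^{*}(C\setminus C_{\Pi};W_{\Lambda})\hookrightarrow CW^{*}(C;W_{\Lambda})$ is an isomorphism onto the subcomplex of generators with endpoints on $C\setminus C_{\Pi}$, together with the fact that $\Phi_{\Lambda}$ respects the decomposition by initial and terminal idempotents, so its restriction to that subcomplex is a quasi-isomorphism onto $CE^{*}(\Lambda;W)[\Lambda\setminus\Pi,\Lambda\setminus\Pi]$. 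Two caveats. First, the ``naturality of the cobordism maps'' you invoke is exactly the homotopy-commutativity of the square in \autoref{res:commutative-square}; it is not automatic, and the paper proves it by an SFT neck-stretch along the contact hypersurface $\partial W_{\Pi}$ inside the cobordism obtained by attaching all handles (first those of $\Pi$, then those of $\Lambda\setminus\Pi$) -- you would need to supply this step rather than assume it. Second, your primary route for $n>1$ (stretching along $\partial W$ and filtering by the number of passes through $\Pi$, with the associated graded a ``tautological identification'' of chords of $\Lambda\setminus\Pi$ in $\partial W_{\Pi}$ with composable words of chords of $\Lambda$) is essentially the original Bourgeois--Ekholm--Eliashberg sketch; the generator identification there is the nontrivial surgery presentation (\autoref{res:surgery-presentation}), which is precisely where the low-dimensional degeneracy hides, and the paper points out that a complete proof along these lines has not appeared in the literature. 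Since the reduction via the commutative square works for every $n$, the cleaner move is to drop the direct neck-stretching argument entirely and use the reduction throughout, which is what the paper does.
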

Note that this theorem is true for all $n > 0$.  To define $\Psi_{\Pi}$, we
perform the same handle attachment on $\R\times \partial W$ as in the
construction of the surgery map, but this time only for the subset $\Pi$ of
components of $\Lambda$. This results in a cobordism $\partial
W_{\Pi}^{\circ}$, with $\partial^{+} W^{\circ}_{\Pi} = \partial W_{\Pi}$ and
$\partial_{-} W^{\circ}_{\Pi} = \partial W$.  We define an exact Lagrangian
cobordism $L = \bigcup_{i \in I} L_{i}$ inside $W^{\circ}_{\Pi}$, by letting
$L_{i} = \Lambda_{i} \times [0,1]$ for $\Lambda_{i} \in \Lambda \setminus \Pi$
and $L_{i} = \Lambda_{i}\times [0,1] \cup L_{\Lambda_i}$ for $\Lambda_{i} \in
\Pi$, where $L_{\Lambda_{i}}$ is the core of the handle attached at
$\Lambda_{i}$. We have $\partial_{-} L \subset \partial^{-} W^{\circ}_{\Pi}$
and $\partial_{+} L = \Lambda \setminus \Pi \subset \partial^{+}
W^{\circ}_{\Pi}$. We then define   
\[
	\Psi_{\Pi}: CE^{*}( \Lambda \setminus \Pi;W_{\Pi} ) \to CE^{*}( \Lambda;W ).
\] 
to be the standard Symplectic Field Theory cobordism map as defined in e.g
\cite{EGH00, Ekh08}.  Its action on a chord $c \in CE^{*}( \Lambda \setminus
\Pi;W_{\Pi} )$ is given by counting pseudo-holomorphic disks in
$W_{\Pi}^{\circ}$ with boundary on $L$, taking $c$ as input and which outputs a
word in $CE^{*}( \Lambda;W )$.

A complete proof of \autoref{res:cobordism-map} has yet to appear in the
literature in the case when $n > 1$, but the proof sketched in \cite{BEE12} is
similar to the proof of
\autoref{res:surgery-map} which is worked out in detail in \cite[Appendix
B]{EL17}\cite{Ekh19}. \autoref{res:cobordism-map} can also be reduced to
\autoref{res:surgery-map} using the following proposition.

\begin{proposition}
\label{res:commutative-square}
Up to homotopy, the following diagram of chain complexes commutes in homotopy.
(When $n = 1$ the diagram commutes on the chain level)
\[\begin{tikzcd}
	{CW^{*}(C \setminus C_\Pi;W_\Lambda)} & {CW^{*}(C;W_\Lambda)} \\
	{CE^{*}(\Lambda \setminus 
	\Pi;W_\Pi)} & {CE^{*}(\Lambda;W),}
	\arrow["{\Psi_{\Pi}}"', from=2-1, to=2-2]
	\arrow[hook,"\iota ", from=1-1, to=1-2]
	\arrow["{\Phi_{\Lambda \setminus \Pi}}"', from=1-1, to=2-1]
	\arrow["{\Phi_{\Lambda}}", from=1-2, to=2-2]
\end{tikzcd}\]
where $\iota$ is the inclusion of generators.
\end{proposition}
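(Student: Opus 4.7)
The plan is to realize both paths around the square as counts of rigid pseudo-holomorphic disks in a single Weinstein cobordism and to extract the homotopy from a neck-stretching degeneration of the almost complex structure. Decompose $W^{\circ}_{\Lambda}$ as the symplectic concatenation $W^{\circ}_{\Pi}\sqcup_{\partial W_{\Pi}}(\R\times\partial W_{\Pi})_{\Lambda\setminus\Pi}$, where the first factor is the cobordism used to define $\Psi_{\Pi}$ and the second is the cobordism used to define $\Phi_{\Lambda\setminus\Pi}$. The Lagrangian $L$ from the construction of $\Psi_{\Pi}$ extends through this decomposition: above the neck it is capped off by the core disks of the handles attached at $\Lambda\setminus\Pi$, producing a Lagrangian in $W^{\circ}_{\Lambda}$ whose positive ends are exactly the Reeb chords living in $\partial(C\setminus C_{\Pi})$. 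This is precisely the reason the diagram is formulated with the inclusion $\iota$ on the non-$\Pi$ part of the generators.

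First I would verify that, with this extended Lagrangian in place, the count of rigid pseudo-holomorphic cobordism disks in $W^{\circ}_{\Lambda}$ agrees with $\Phi_{\Lambda}\circ\iota$ on generators in the image of $\iota$. Next I would choose a one-parameter family $\{J_t\}_{t\in[0,\infty]}$ of compatible almost complex structures on $W^{\circ}_{\Lambda}$ which for $t=0$ is of the type computing $\Phi_{\Lambda}$ and which stretches the neck along $\partial W_{\Pi}$ as $t\to\infty$ while keeping both ends cylindrical. By the SFT compactness theorem, as $t\to\infty$ each rigid $J_t$-holomorphic disk breaks into a two-level building: an upper level in $(\R\times\partial W_{\Pi})_{\Lambda\setminus\Pi}$ with negative asymptotics at Reeb chords of $\Lambda\setminus\Pi\subset\partial W_{\Pi}$, exactly as counted by $\Phi_{\Lambda\setminus\Pi}$, and a lower level in $W^{\circ}_{\Pi}$ with matching positive asymptotics, exactly as counted by $\Psi_{\Pi}$. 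Hence the $t=\infty$ count equals $\Psi_{\Pi}\circ\Phi_{\Lambda\setminus\Pi}$.

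The chain homotopy $H$ is then read off from the parametric moduli space over $t\in[0,\infty]$ of $J_t$-holomorphic disks of virtual dimension zero. This space is one-dimensional, and the codimension-one boundary of its compactification decomposes as the $t=0$ and $t=\infty$ contributions above together with Reeb-chord breakings at the positive and negative ends of $W^{\circ}_{\Lambda}$; the latter assemble in the standard way into $dH + Hd$, giving the desired chain homotopy between $\Phi_{\Lambda}\circ\iota$ and $\Psi_{\Pi}\circ\Phi_{\Lambda\setminus\Pi}$.

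The main obstacle will be the usual analytic bookkeeping for SFT-style neck-stretching arguments: establishing transversality for the fixed and parametric moduli spaces, ruling out bubbling of low-action configurations (especially near the handles attached at $\Pi$), and carefully matching the partial holomorphic building structure underlying $\mu''$ across the neck so that each negative puncture of a primary disk pairs correctly with its secondary filling disk. These are precisely the technical issues handled in the proof of the surgery formula in \cite[Appendix B]{EL17}, and the arguments there should adapt with only cosmetic changes.
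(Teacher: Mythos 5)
Your argument is essentially the paper's own proof: the paper likewise identifies the terms of $\Phi_{\Lambda}\circ\iota$ with rigid disks in $W^{\circ}_{\Lambda}$ with boundary on $L$ and $C\setminus C_{\Pi}$, identifies the terms of $\Psi_{\Pi}\circ\Phi_{\Lambda\setminus\Pi}$ with two-level buildings split along $\partial W_{\Pi}$, and matches them by the standard SFT neck-stretching and gluing argument along the contact hypersurface $\partial W_{\Pi}$ inside $W^{\circ}_{\Lambda}$ (viewing the cobordism as built by first attaching the $\Pi$-handles and then the $\Lambda\setminus\Pi$-handles). Your additional detail on extracting the chain homotopy from the parametric moduli space over the stretching parameter is just the standard way of making the paper's terse statement explicit, so the two proofs coincide in substance.
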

Note again that this is true for all $n > 0$.
\begin{proof}
	Each term of $\Phi_{\Lambda} \circ \iota$ corresponds to a disk in
	$W_{\Lambda}^{\circ}$ with boundary on $L$ and $C\setminus C_{\Pi}$,
	and each term of $\Psi_{\Pi} \circ \Phi_{\Lambda \setminus \Pi}$
	corresponds to a building consisting of one disk in $W_{\Lambda
	\setminus \Pi}^{\circ}$ with boundary on $L$ and $C\setminus C_{\Pi}$
	and several disks in $W_{\Pi}^{\circ}$ with boundary on $L$, see
	\autoref{fig:commutative-square}. These are in one-to-one
	correspondence by a standard SFT-stretching and gluing argument. More
	specifically, we stretch the neck around the contact type hypersurface
	$\partial W_{\Pi}$ inside $W_{\Lambda}$ where the latter cobordism is
	obtained by first attaching the handles in $\Pi$ and then attaching the
	handles in $\Lambda \setminus \Pi$. For $n=1$, we obtain a direct bijection 
	of the moduli spaces of disks and buildings contributing to
	$\Phi_{\Lambda} \circ \iota$ and $\Psi_{\Pi} \circ \Phi_{\Lambda
	\setminus \Pi}$ respectively, by gluing relevant the disks by an
	explicit concatenation along the Reeb chords. When $n > 1$ the argument
	is more involved because of two reasons. First, one can only arrange so
	that the surgery formula becomes a quasi-isomorphism below some finite
	action level, depending on choices of contact forms and complex
	structures on the contact manifold obtained by surgery; one must take a
	limit of forms and complex structures to the full quasi-isomorphism.
	Note that the Chekanov-Eliashberg algebra in the bottom left corner of
	the diagram also depends on these choices of structures. Second, we
	need to use an analytic gluing of the pseudoholomorphic discs in order
	to relate disks before and after stretching. We give a sketch of the
	argument here. First, we choose a sequence of almost complex structures
	$J_{k}$, stretching the neck. Any sequence of pseudo-holomorphic disks,
	with respect to the respective $J_{k}$, contributing to $\Phi_{\Lambda}
	\circ \iota$, and of bounded energy, degenerates into a building as
	illustrated to the right in \autoref{fig:commutative-square}, when $k
	\to \infty$. By choosing an action cutoff for the generators of
	$CW^{*}(C \setminus C_\Pi;W_\Lambda)$ we obtain a bound on the energy
	of the disks, and can thus conclude that for a sufficiently large $k$,
	we get the desired bijection of the moduli spaces. Thus, the diagram
	commutes below any given action level. Taking the categorical limit of
	the diagrams with respect to the inclusion maps from lower to higher
	action levels, we then obtain the desired result.
\begin{figure}[ht]
    \centering
    
\begingroup%
  \makeatletter%
  \providecommand\color[2][]{%
    \errmessage{(Inkscape) Color is used for the text in Inkscape, but the package 'color.sty' is not loaded}%
    \renewcommand\color[2][]{}%
  }%
  \providecommand\transparent[1]{%
    \errmessage{(Inkscape) Transparency is used (non-zero) for the text in Inkscape, but the package 'transparent.sty' is not loaded}%
    \renewcommand\transparent[1]{}%
  }%
  \providecommand\rotatebox[2]{#2}%
  \newcommand*\fsize{\dimexpr\f@size pt\relax}%
  \newcommand*\lineheight[1]{\fontsize{\fsize}{#1\fsize}\selectfont}%
  \ifx\svgwidth\undefined%
    \setlength{\unitlength}{284.91097758bp}%
    \ifx\svgscale\undefined%
      \relax%
    \else%
      \setlength{\unitlength}{\unitlength * \real{\svgscale}}%
    \fi%
  \else%
    \setlength{\unitlength}{\svgwidth}%
  \fi%
  \global\let\svgwidth\undefined%
  \global\let\svgscale\undefined%
  \makeatother%
  \begin{picture}(1,0.29500287)%
    \lineheight{1}%
    \setlength\tabcolsep{0pt}%
    \put(0,0){\includegraphics[width=\unitlength,page=1]{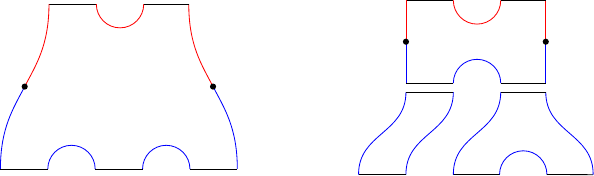}}%
    \put(0.77678248,0.21427767){\color[rgb]{0,0,0}\makebox(0,0)[lt]{\lineheight{1.25}\smash{\begin{tabular}[t]{l}\footnotesize$\Phi_{\Lambda \slash \Pi}$\end{tabular}}}}%
    \put(0.66469777,0.06228908){\color[rgb]{0,0,0}\makebox(0,0)[lt]{\lineheight{1.25}\smash{\begin{tabular}[t]{l}\footnotesize$\Psi_{\Pi}$\end{tabular}}}}%
    \put(0.86434737,0.06228908){\color[rgb]{0,0,0}\makebox(0,0)[lt]{\lineheight{1.25}\smash{\begin{tabular}[t]{l}\footnotesize$\Psi_{\Pi}$\end{tabular}}}}%
    \put(0.16120645,0.14305175){\color[rgb]{0,0,0}\makebox(0,0)[lt]{\lineheight{1.25}\smash{\begin{tabular}[t]{l}\footnotesize$\Phi_{\Lambda} \circ \iota$\end{tabular}}}}%
  \end{picture}%
\endgroup%

    \caption{Illustrated to the left is the type of disks contributing to
	    $\Phi \circ \iota$, and to the type of buildings contributing to
    	$\Psi_{\Pi} \circ \Phi_{\Lambda \setminus \Pi}$. The red segments map
	to $C \setminus C_{\Pi}$ and the blue segments map to $L$.}
    \label{fig:commutative-square}
\end{figure}

\end{proof}
\begin{proof}[Proof of \autoref{res:cobordism-map}]
	It is clear by construction of the wrapped Floer complex that the
	inclusion $\iota:CW^{*}( C \setminus C_{\Pi};W_{\Lambda} ) \to CW^{*}(
	C;W_{\Lambda} )$ is an isomorphism onto the subcomplex of all chords
	without endpoints on the co-cores $C_{\Pi}$ that correspond to the
	attaching spheres in $\Pi$.  The surgery maps in
	\autoref{res:commutative-square} are quasi-isomorphisms by
	\autoref{res:surgery-map} and will send any chord with endpoints on a
	given pair of co-cores to a sum of words of Reeb chords whose initial
	and terminal vertices both are located on the corresponding attaching
	spheres of $\Lambda$. It thus follows that $\Psi_{\Pi}$ is a
	quasi-isomorphism onto $CE^{*}( \Lambda;W )[\Lambda \setminus
	\Pi,\Lambda \setminus \Pi]$. 	
\end{proof}

\subsubsection{Removal of Legendrians with exact idempotents.}
The following is an algebraic lemma which we will make use of several times in
the subsequent sections. 
\begin{lemma}
\label{res:exact-removal}
	Let $A$ be a semi-free dg-algebra, whose underlying associative algebra
	is the path algebra of a quiver $Q$ with vertices $Q_0$ (the
	idempotents of $A$) and arrows $Q_1$ (the generators over the
	idempotent ring).  Suppose that there for a vertex $i \in Q_{0}$ exists
	an element $a \in Q_1$ that satisfies $\partial a = e_i$, where $e_i$
	is the idempotent of $i$, such that $a$ does not occur the differential
	of any other arrow. Let $I$ be the ideal generated by all arrows with
	source or target on $i$. Then the projection 
	\[
		A \to A \slash I
	\] 
	is a quasi-isomorphism.
\end{lemma}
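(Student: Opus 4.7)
The strategy is to reformulate the statement as an acyclicity claim and then exhibit an explicit contracting homotopy. The hypothesis $\partial a = e_i$ forces the dg-ideal generated by the arrows at $i$ to contain the idempotent $e_i$, so this dg-ideal equals $I = Ae_iA$. The short exact sequence of chain complexes
\[
	0 \to I \to A \to A/I \to 0
\]
reduces the lemma to the assertion that $I$ is acyclic, after which the long exact sequence yields the desired quasi-isomorphism $A \to A/I$.

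To show $I$ is acyclic, I would construct a chain contraction $s\colon I \to I$ of degree $-1$. For each basis word $w \in I$, define $s(w)$ by inserting $a$ at a canonical $i$-position of $w$: set $s(e_i) = a$, and for $w = b_1\cdots b_k$ with rightmost $i$-vertex $v_p$, set $s(w) = \pm b_1\cdots b_p \cdot a \cdot b_{p+1}\cdots b_k$ with an appropriate sign. The Leibniz term coming from $\partial a = e_i$ absorbs into the two adjacent letters (whose common vertex is $i$) and recovers $w$, so $\partial s(w)$ equals $w$ plus the remaining Leibniz terms. These are intended to cancel against $s(\partial w)$: whenever $\partial$ acts on a letter and does not create a new interior $i$-vertex, the rightmost $i$-position is unchanged and one gets a clean matching.

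The main obstacle is that this matching fails when $\partial$ applied to a letter of $w$ produces a path passing through $i$ at a new interior vertex, shifting the rightmost $i$-position and leaving an uncancelled residue. To absorb these residues, I would exploit the hypothesis that $a$ does not appear in the differential of any other arrow: it makes the subalgebra $A' \subset A$ generated by $Q_1\setminus\{a\}$ a genuine sub-dga and equips $A$ with an increasing filtration by the number of occurrences of $a$ that $\partial$ respects. Either one can iterate, writing the discrepancy as $\partial s + s\partial = \mathrm{id}_I + E$ for a chain map $E$ that strictly increases the $a$-count, and then defining the honest contraction as the filtration-convergent sum $h = s + sE + sE^2 + \cdots$; or one can pass to the spectral sequence of this filtration, on whose associated graded $I$ becomes the augmented bar-type complex
\[
	\bigoplus_{p \geq 0} (A'e_i) \otimes_{\mathbf{k}} (e_iA'e_i)^{\otimes p} \otimes_{\mathbf{k}} (e_iA') \longrightarrow A'e_iA',
\]
whose acyclicity is the standard acyclicity of the augmented bar construction over $e_iA'e_i$ (equivalently, the multiplication map $A'e_i \otimes_{e_iA'e_i} e_iA' \to A'e_iA'$ is an isomorphism). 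Either route yields $H^{*}(I) = 0$, and hence the quasi-isomorphism.
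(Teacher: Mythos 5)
Your reduction is the same as the paper's: the short exact sequence $0 \to I \to A \to A/I \to 0$ reduces the lemma to acyclicity of $I$, and the hypothesis that $a$ occurs in no other differential is used to split $\partial = \bar{\partial}_a + \partial_a$ and to organize $I$ by the number of occurrences of $a$; the paper then takes the spectral sequence of this double complex with the $\partial_a$-cohomology on the first page and observes that it vanishes. Your insertion operator $s$ is indeed an exact contraction of $(I,\partial_a)$ (the terms at non-rightmost $i$-visits cancel between $\partial_a s$ and $s\partial_a$, and at the rightmost visit the coefficients combine to $1$), so the error $E=\partial s+s\partial-\mathrm{id}$ really does raise the $a$-count; moreover your identification of $(I,\partial_a)$ with the augmented two-sided bar complex of $e_iA'e_i$ with coefficients $A'e_i$ and $e_iA'$ is correct, and its acyclicity (equivalently, the multiplication isomorphism you state) holds because $A'e_i$ is a free right $e_iA'e_i$-module, by splitting each path at its last visit to $i$. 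So the essential content of your argument is the same as the paper's, with the bar picture supplying an explicit justification of the paper's assertion that the first page vanishes.

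Two caveats. First, route (a) has a genuine gap: the filtration by $a$-count is exhaustive but not complete, and $E=\bar{\partial}_a s+s\bar{\partial}_a$ is not locally nilpotent, so $h=s+sE+sE^2+\cdots$ is an infinite sum that converges only in a completion of $I$; a contraction of the completion does not yield $H^{*}(I)=0$, so this version cannot stand on its own. Second, in route (b) you attach the bar complex to the wrong spectral sequence: the associated graded of the increasing $a$-count filtration carries the internal differential $\bar{\partial}_a$, not $\partial_a$, and the bar differential only appears on its $E_1$-page, where the required vanishing involves the cohomology of $A'$ and is no longer automatic. The complex you wrote down is the first page of the complementary spectral sequence of the bicomplex, i.e. exactly the one the paper uses ($\partial_a$-cohomology first); with that adjustment route (b) is correct and coincides with the paper's proof.
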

\begin{proof}
	Since $A$ is an algebra over the idempotent ring, the differential will
	preserve the source and target of words. Therefore, $I$ is a chain
	complex, and we have a short exact sequence 
	\[ 
		0 \to I \to A \to A \slash I \to 0, 
	\] 
	of chain complexes.  Let $\partial$ be the differential on $I$. The
	dg-algebra has a double complex structure $\partial = \bar{\partial}_{a} +
	\partial_{a}$, where $\partial_{a}$ acts as $\partial$ on $a$ and
	vanishes on all other arrows, and $\bar{\partial}_{a} = \partial -
	\partial_{a}$. If one considers the spectral sequence arising from this
	double complex with the $\partial_{a}$-cohomology on the first page,
	one sees that it vanishes. The complex $I$ is thus exact and the result
	then follows by considering the long exact sequence in cohomology.
\end{proof}
\begin{remark}
	The special case in which we will use this is when $A = CE^{*}( \Lambda
	\cup \Pi;W )$ for some Legendrians $\Lambda$ and $\Pi$, and there is a
	Reeb chord $a$ such that $\partial a = e_{\Pi}$, which does not occur
	in the differential. Then $A \slash I$ is isomorphic to $CE^{*}(
	\Lambda ;W )$, and we thus have a quasi-isomorphism $CE^{*}( \Lambda
	\cup \Pi;W ) \isomto CE^{*}( \Lambda ;W )$.
\end{remark}

\subsection{The surgery formula for surfaces}
\label{ssec:surgery-surfaces}
As mentioned above, when $n=1$, i.e. when the contact manifold is
one-dimensional, one cannot perturb the Legendrian so that all Reeb chords are
non-degenerate.  It turns out that the bijection of the generators needed in
the argument for the proof of \autoref{res:surgery-map} when $n>1$ in
\cite{EL17} does not hold.  It is therefore necessary to consider the
cohomology of the algebras in more detail.  In order to stay consistent with
the next section, we here switch notation and denote the Weinstein manifold as
$V$ and the Legendrian in $\partial V$ as $\partial \Lambda$. 

We will also allow the Legendrian to consist of stops as well as $0$-spheres.
In this dimension that means that $\partial \Lambda$ is a finite set of points
in $\partial V$ that is partitioned into two-point $0$-spheres (the attaching
spheres of the one-handles) and one-point
stops. We then get a Weinstein sector $V_{\partial \Lambda}$ by attaching
Weinstein $1$-handles at the $0$-spheres and half-handles at the stops. The
stops should be thought of as singular Legendrian skeleta of 'degenerate'
one-point Weinstein hypersurfaces in $\partial V$. Through their respective
surgery formulas, the singular and smooth Chekanov--Eliashberg dg-algebras
correspond to wrapped and partially wrapped Floer cohomology, respectively.  In
higher dimensions they require separate treatments, the singular one being
defined in terms of the smooth one. However, when $n=1$ the theory for the
'singular' stops and the 'smooth'  $0$-spheres is essentially the same, and we
will prove the surgery formula as one theorem. Write $CE^{*}( \partial
\Lambda;V )$ for the Chekanov--Eliashberg dg-algebra of $\partial \Lambda$ and
$CW^{*}( C;V_{\partial \Lambda} )$ for the partially wrapped Floer cohomology
of the co-cores $C$ produced by the surgery. 
\begin{theorem}
\label{res:surgery-map-surface}
	Let $V$ be a Weinstein surface and $\partial \Lambda$ an embedded
	collection of $0$-spheres and points, as described above. There is an
	$A_{\infty}$-quasi-isomorphism
	\[
		\Phi_{\partial \Lambda} :CW^{*}( C;V_{\partial \Lambda} )
		\isomto
		CE^{*}( \partial \Lambda;V ).
	\] 
\end{theorem}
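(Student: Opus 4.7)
The plan is to define $\Phi_{\partial \Lambda}$ by the same prescription as in the higher-dimensional surgery map: each $A_{\infty}$-term $\Phi_{\partial \Lambda}^{k}$ counts rigid pseudo-holomorphic cobordism disks in the Weinstein cobordism $V_{\partial \Lambda}^{\circ}$ with boundary on $C$ and on the core cobordism $L$, taking $k$ generators of $CW^{*}(C;V_{\partial \Lambda})$ as positive asymptotics and outputting a word of Reeb chords of $\partial \Lambda$. The compactness, transversality, and gluing arguments of \cite[Appendix B]{EL17} and \cite{Ekh19} are dimension-independent and carry over verbatim, so $\Phi_{\partial \Lambda}$ is a well-defined $A_{\infty}$-morphism.

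Proving quasi-isomorphism is where dimension two behaves differently. In higher dimensions one exhibits a bijection of generators below each action level, but in dimension two all Reeb chords are Morse--Bott degenerate and no such bijection exists. My plan is instead to compute both sides explicitly and compare them via $\Phi_{\partial \Lambda}$. The first reduction is to the connected case. Next, I would repeatedly invoke \autoref{res:exact-removal}: whenever a chord $c^{1}_{ii}$ bounds a disk so that $\partial c^{1}_{ii}=e_{i}$, both the chord and the corresponding co-core can be removed, the latter because it becomes null-homotopic in $V_{\partial \Lambda}$. By \autoref{res:commutative-square} these removals commute with $\Phi_{\partial \Lambda}$ up to homotopy, so one may reduce to the case where $V$ contains no components which are disks bounded by a single chord with $i=j$ and $p=1$.

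For the remaining cases, I would filter both sides by the winding number $p$ of each chord around its base point. On the CE side the associated graded differential is $\partial_{0}$, and a direct calculation identifies the $E_{1}$-page with the vector space spanned by unconcatable words of short chords. A parallel filtration on the CW side, by the number of wraps around the belt sphere of each handle, produces an $E_{1}$-page with the same basis, realized as paths in the dual quiver. A count of the simplest cobordism disks then shows that $\Phi_{\partial \Lambda}$ induces the identity on $E_{1}$-pages, forcing quasi-isomorphism.

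The main obstacle is the SFT-stretching argument needed to match the two filtrations: one must show that every rigid cobordism disk with a short chord as positive asymptotic breaks, in the limit of stretching along the hypersurface separating the handles from the symplectization of $\partial V$, into exactly one primary disk in the symplectization capped off by filling disks inside the handles, and that these filling disks are essentially unique. Two-dimensionality actually helps here, since for a short chord the moduli of filling disks in a Weinstein $1$-handle is classified by elementary topology, and the primary disk is forced by its asymptotics. The bookkeeping of signs (if working outside $\textbf{k}=\Z_{2}$) and of orientations of the associated moduli spaces is the most delicate piece.
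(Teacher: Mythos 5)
Your skeleton matches the paper's in outline: define $\Phi_{\partial \Lambda}$ by counting cobordism disks as in \cite{EL17}, reduce to the first component, filter both sides, and compare associated graded pieces (the paper filters by \emph{limit action}, the action in the limit where the handles are shrunk, which plays the role of your winding-number filtration). However, there are two genuine gaps. First, your preliminary reduction via \autoref{res:exact-removal} does not apply as stated: a chord $c_{ii}^{1}$ bounding a disk does occur in the differential of other generators (it appears as a factor in $\partial_{0}c_{ij}^{p}$ for longer chords split at $i$), so the hypothesis that the exact generator appears in no other differential fails; moreover that lemma deletes the entire vertex $i$, i.e.\ a whole sphere or stop together with all chords touching it, not merely ``the chord and the corresponding co-core.'' In the paper no such reduction is needed for the surgery theorem at all: null-homotopic co-cores are only dealt with in \autoref{res:wrapped-floer-diff}, which is used for \autoref{res:zero-dim-leg-min-mod}, not here.

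Second, and more seriously, the heart of the matter --- that the action-preserving part $\Phi_{\partial \Lambda}'$ induces an isomorphism on the associated graded level --- is precisely the step you leave as ``a count of the simplest cobordism disks'' plus a sketched SFT-stretching match, which you yourself flag as the main obstacle. The paper proves this by a concrete geometric argument: a rigid unanchored cobordism disk has no branch points, so the interior preimage of $L$ would be an embedded curve that is either a closed loop (impossible because the map is a locally injective immersion) or joins two negative punctures (impossible because the enclosed half-annulus would have negative area by Stokes); hence the disk factors through the surface obtained by cutting $V_{\partial \Lambda}^{\circ}$ open along $L$, and passing to its universal cover and invoking the Riemann mapping theorem yields a bijection between Reeb chords of $C$ and unconcatable words of short chords. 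Combined with \autoref{res:short-chords-generate}, which identifies these words with a basis of $H^{*}(CE(\partial \Lambda;V),\partial_{0})$, this makes $\Phi_{\partial \Lambda}'$ a quasi-isomorphism, and the spectral sequence of the mapping cone with the limit action filtration finishes the proof. Your proposal supplies no substitute for this bijection: the claim that the CW-side $E_{1}$-page consists of ``paths in the dual quiver'' and that $\Phi_{\partial \Lambda}$ is the identity on $E_{1}$ is exactly what must be proved (note that on the CW side the action-preserving differential simply vanishes, so its $E_{1}$-page is spanned by the chords of $\partial C$ themselves), so as written the argument is incomplete.
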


To prove this, we begin by giving a combinatorial description of $CE^{*}(
\partial \Lambda;V )$. The manifold $V$ is a surface, so $\partial V$ is a
disjoint union of copies of $\mathbb{S}^{1}$. There is one idempotent for each
$0$-sphere and stop. For each component of $\partial V$ we pick a base point
not in $\partial \Lambda$ and write $c_{ij}^{p}$ for the Reeb chord which
starts at $i \in \partial \Lambda$, ends at $j \in \partial \Lambda$, and
passes though the base point $p$ times.

\begin{proposition}
\label{res:zero-dim-diff}
	Let $n=1$. For a chord $c_{ij}^{p}$, let
	\[
		\partial_{0}( c_{ij}^{p} )  = \sum \pm c_{kj}^{l}c_{ik}^{p-l}
	\]
	with the sum taken over all $k$ and $l$ for which the chords on the
	right hand side exist. Let $\partial_{-1}( c_{ij}^{p} ) = e_{k}$ if
	$c_{ij}^{p}$ is a chord with $i=j$, $p=1$, and which lives in the
	boundary of a simply connected component of $V$, where $e_{k}$ is the
	corresponding idempotent of the component of $i$, and let
	$\partial_{-1}(c_{ij}^{p} )=0$ otherwise. The differential $\partial$
	of $CE^{*}( \partial \Lambda;V )$ is then given by $\partial =
	\partial_{0} + \partial_{-1}$, where each $\partial_{i}$ has been
	extended to the whole algebra by the Leibniz rule.
\end{proposition}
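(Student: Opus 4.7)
The plan is to enumerate the rigid pseudo-holomorphic curves contributing to the differential of $CE^{*}( \partial \Lambda;V )$ by exploiting the very explicit geometry of the $1$-dimensional contact boundary $\partial V$. Such a curve consists of a pseudo-holomorphic disk in the symplectization $\R \times \partial V$ with boundary on $\R \times \partial \Lambda$ and one positive boundary puncture asymptotic to $c_{ij}^{p}$, together with possibly some anchor data in $V$. Since $\partial V$ is a disjoint union of circles, the symplectization splits correspondingly into cylinders $\R \times S^{1}$, and both the primary disk and the base of each anchor must sit inside the component containing $i$ and $j$.

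For the unanchored contribution producing $\partial_{0}$, I would first classify rigid disks in $\R \times S^{1}$ with Lagrangian boundary on the vertical lines $\R \times \{i_{1},\ldots,i_{m}\}$ corresponding to the points of $\partial \Lambda$ in that component. Lifting to the universal cover $\C = \R \times \R$, a holomorphic disk with one positive puncture at $c_{ij}^{p}$ and several negative punctures lifts to a disk in a half-plane, uniquely determined up to biholomorphism by the cyclic sequence of vertical lines its boundary traverses. The Riemann mapping theorem then identifies the moduli space as discrete, indexed by the ways to subdivide the arc of $c_{ij}^{p}$ at intermediate points of $\partial \Lambda$. The dimension formula of \cite{CEL10} forces rigidity (modulo $\R$-translation) to occur precisely when there are exactly two negative punctures, i.e. a single subdivision point $k$, which is exactly the data appearing in the stated $\partial_{0}$ formula; the winding numbers of the two sub-arcs read off the superscripts.

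For the anchored contribution producing $\partial_{-1}$, I would use that Reeb orbits of the $1$-dimensional manifold $\partial V$ are exactly the boundary circles of $V$ (possibly with multiplicity). An anchor therefore requires a pseudo-holomorphic plane in $V$ asymptotic to such a boundary circle, and the standard contractibility obstruction ensures this exists precisely when the circle bounds in $V$, i.e. when the corresponding component of $V$ is simply connected. In that case, uniformization supplies an essentially unique biholomorphism from $\C$ to that component, giving a unique anchor up to automorphism. A dimension count then shows that a rigid primary disk with one positive puncture, one interior anchor puncture and no negative boundary punctures exists only when the asymptotic chord is the simple loop $c_{ii}^{1}$ and the anchor is a simple cover; this contributes the idempotent $e_{i}$ to the differential.

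The main obstacle is the careful dimension/transversality bookkeeping for the anchored case: one must rule out, using \cite{CEL10} together with the automatic regularity for holomorphic curves in a Riemann surface, all other potentially rigid configurations --- multi-anchor disks, anchors on higher-multiplicity covers of boundary circles, and mixed configurations with simultaneous negative boundary punctures and anchors. Once these are excluded, combining the two classifications yields $\partial = \partial_{0} + \partial_{-1}$ as claimed, with $\partial^{2}=0$ following from the standard SFT compactness argument already recorded in \autoref{ssec:smooth-ce}.
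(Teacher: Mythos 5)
Your proposal is correct and follows essentially the same route as the paper: identify $\partial_{0}$ with rigid unanchored symplectization disks, which rigidity (equivalently the one-slit picture) forces to have exactly two negative boundary punctures, and identify $\partial_{-1}$ with anchored disks, where the anchor plane exists only over a simply connected component and only the simply covered orbit (hence $i=j$, $p=1$) gives a rigid configuration. The bookkeeping you defer at the end (excluding multiply covered anchors, multi-anchor disks, and mixed configurations) is treated just as briefly in the paper, via the observation that extra slits and the branch points forced by higher-multiplicity internal punctures destroy rigidity.
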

\begin{proof}
	The terms $\partial_{0}$ and $\partial_{-1}$ correspond to unanchored
	and anchored symplectization disks respectively, illustrated in
	\autoref{fig:ce-diff-symplectization-disks}.
\begin{figure}[H]
    \centering
    
\begingroup%
  \makeatletter%
  \providecommand\color[2][]{%
    \errmessage{(Inkscape) Color is used for the text in Inkscape, but the package 'color.sty' is not loaded}%
    \renewcommand\color[2][]{}%
  }%
  \providecommand\transparent[1]{%
    \errmessage{(Inkscape) Transparency is used (non-zero) for the text in Inkscape, but the package 'transparent.sty' is not loaded}%
    \renewcommand\transparent[1]{}%
  }%
  \providecommand\rotatebox[2]{#2}%
  \newcommand*\fsize{\dimexpr\f@size pt\relax}%
  \newcommand*\lineheight[1]{\fontsize{\fsize}{#1\fsize}\selectfont}%
  \ifx\svgwidth\undefined%
    \setlength{\unitlength}{225.59328335bp}%
    \ifx\svgscale\undefined%
      \relax%
    \else%
      \setlength{\unitlength}{\unitlength * \real{\svgscale}}%
    \fi%
  \else%
    \setlength{\unitlength}{\svgwidth}%
  \fi%
  \global\let\svgwidth\undefined%
  \global\let\svgscale\undefined%
  \makeatother%
  \begin{picture}(1,0.80539048)%
    \lineheight{1}%
    \setlength\tabcolsep{0pt}%
    \put(0,0){\includegraphics[width=\unitlength,page=1]{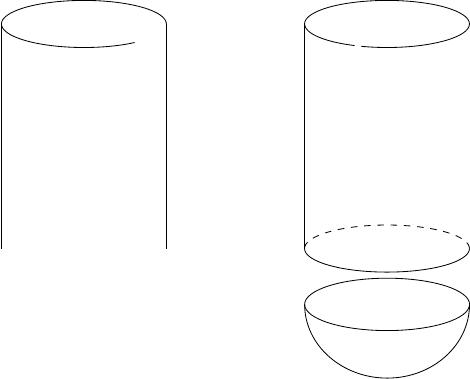}}%
    \put(0.79847808,0.64489842){\color[rgb]{0,0,0}\makebox(0,0)[lt]{\lineheight{1.25}\smash{\begin{tabular}[t]{l}$c_{ii}^{1}$\end{tabular}}}}%
    \put(0.15358389,0.64489842){\color[rgb]{0,0,0}\makebox(0,0)[lt]{\lineheight{1.25}\smash{\begin{tabular}[t]{l}$c^{p}_{ij}$\end{tabular}}}}%
    \put(-0.00156959,0.18056195){\color[rgb]{0,0,0}\makebox(0,0)[lt]{\lineheight{1.25}\smash{\begin{tabular}[t]{l}$c_{im}^{l}$\end{tabular}}}}%
    \put(0.13038804,0.16036416){\color[rgb]{0,0,0}\makebox(0,0)[lt]{\lineheight{1.25}\smash{\begin{tabular}[t]{l}$c_{mj}^{p-l}$\end{tabular}}}}%
    \put(0,0){\includegraphics[width=\unitlength,page=2]{ce-diff-symplectization-disks.pdf}}%
  \end{picture}%
\endgroup%

    \caption{The two types of symplectization disks contributing to $\partial$,
    with the unanchored disk to the right and the anchored disk to the left.
    The blue parts are the boundary segments of the disk which map to $\partial
    \Lambda \times \R$.}
    \label{fig:ce-diff-symplectization-disks}
\end{figure}
	Having more than one slit in the symplectization disk would result in
	it being non-rigid, and if an internal puncture were to wind more
	than once around the boundary, the anchor at the puncture would be
	non-rigid, as it would need to have a branch point.
\end{proof}

Note that $\partial_{0}$ is action preserving, while $\partial_{-1}$ is
strictly action decreasing, and that
$\partial_{0}^{2}=\partial_{-1}^{2}=\partial_{0}\circ \partial_{-1} =
\partial_{-1} \circ \partial_{0} = 0$. We will compute the
$\partial_{0}$-cohomology of $CE^{*}( \partial \Lambda;V)$, for which we need
some additional terminology. 

\begin{definition}
\label{def:short-chord-def}
	Given two Reeb chords $c_{ik}^{p}$ and $c_{kj}^{q}$ of $\partial
	\Lambda$ whose endpoint and starting point agree, their
	\emph{concatenation} is
	\[
		c_{kj}^{q} * c_{ik}^{p} := c_{ij}^{p+q}.
	\]
	A chord is called \emph{short} if it is not the concatenation of any
	other chords. We also consider the idempotents to be short chords.  Let
	$\bbc = c_{i_mj_m}^{p_m}\ldots c_{i_{1}j_{1}}^{p_{1}} \in A$ be a word
	of chords and let $l_1,\ldots,l_k$ be the subsequence $1,\ldots,m-1$ of
	all $l_{h}$ such that $j_{l_{h}}\neq i_{l_{h}+1}$. The \emph{total
	concatenation}, $\bbc_{\text{tc}}$, of $\bbc$ is the word
	\[
		\bbc_{\text{tc}}:=
		(c_{i_mj_m}^{p_m} * \ldots *
		c_{i_{l_{k}+1}j_{l_{k}+1}}^{p_{l_{k}+1}})
		\ldots
		(c_{i_{l_{h+1}}j_{l_{h+1}}}^{p_{l_{h+1}}} * \ldots *
		c_{i_{l_{h}}j_{l_{h}}}^{p_{l_h}})
		\ldots
		(c_{i_{l_{1}}j_{l_{1}}}^{p_{l_1}} * \ldots *
		c_{i_{1}j_{1}}^{p_{1}})
	\]
	i.e. the word obtained by concatenating all adjacent chords in $\bbc$
	whose respective starting point and endpoint agree. The \textit{total
	splitting}, $\bbc_{\text{ts}}$, of $\bbc$ is the unique word consisting
	only of short chords such that $(\bbc_{\text{ts}})_{\text{tc}} =
	\bbc_{\text{tc}}$, i.e. the word obtained by splitting the chords of
	$\bbc$ into two whenever they pass a point of $\partial \Lambda$. A
	word $\bbc$ is called \textit{unconcatable} if $\bbc=\bbc_{\text{tc}}$
	or if $\bbc = e_{i}$.
\end{definition}

\begin{lemma}
\label{res:short-chords-generate}
	The residue classes of the unconcatable words which only consist of
	short chords, i.e. words $\bbc$ with $\bbc = \bbc_{\textup{tc}} =
	\bbc_{\textup{ts}}$, form a basis of $H^{*}(CE( \partial
	\Lambda;V),\partial_{0})$.
\end{lemma}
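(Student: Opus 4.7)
The plan is to stratify $(CE(\partial \Lambda; V), \partial_0)$ by total splitting. For every word $\bbm$ of short chords, let $A_{\bbm}$ denote the subspace spanned by all words $\bbc$ with $\bbc_{\text{ts}} = \bbm$. Since the formula for $\partial_0$ in \autoref{res:zero-dim-diff} splits an individual chord at an intermediate $\partial \Lambda$-point, and such a splitting leaves the total splitting of the ambient word unchanged, we have $\partial_0(A_{\bbm}) \subseteq A_{\bbm}$ and
\[
	(CE(\partial \Lambda; V), \partial_0) \;=\; \bigoplus_{\bbm} (A_{\bbm}, \partial_0|_{A_{\bbm}})
\]
as chain complexes, so it suffices to compute the cohomology of each $A_{\bbm}$ separately.

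Fix now $\bbm = s_1 \cdots s_N$ and let $G(\bbm) \subseteq \{1, \ldots, N-1\}$ be the set of \emph{concatenable gaps}, i.e.\ those $i$ for which the endpoint of $s_i$ coincides with the starting point of $s_{i+1}$. A word $\bbc \in A_{\bbm}$ is uniquely specified by choosing a subset $S \subseteq G(\bbm)$ of gaps to close, the long chords of $\bbc$ being the maximal runs of consecutive indices in $S$. Writing $e_S \in A_{\bbm}$ for the resulting word, splitting a long chord of $e_S$ at one of its internal gaps $g$ simply opens that gap, so
\[
	\partial_0 e_S \;=\; \sum_{g \in S} \pm \, e_{S \setminus \{g\}}.
\]
This identifies $(A_{\bbm}, \partial_0)$ with the tensor product complex $V^{\otimes |G(\bbm)|}$, where $V = \mathbf{k}\langle v_0, v_1 \rangle$ is the two-term complex with $d v_0 = 0$ and $d v_1 = v_0$.

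Since $V$ is acyclic, the Künneth formula gives $H^{*}(V^{\otimes |G(\bbm)|}) = 0$ as soon as $|G(\bbm)| \geq 1$. When $|G(\bbm)| = 0$, that is, precisely when $\bbm$ is unconcatable, the stratum $A_{\bbm}$ is one-dimensional, spanned by $\bbm$ itself with vanishing differential, and contributes a single cohomology class $[\bbm]$. Summing over all strata yields the claimed basis of $H^{*}(CE(\partial \Lambda; V), \partial_0)$.

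The main obstacle is verifying that the identification $(A_{\bbm}, \partial_0) \cong V^{\otimes |G(\bbm)|}$ is a genuine chain isomorphism: one must check that the only intermediate points along a long chord in $e_S$ at which $\partial_0$ can split are precisely the closed gaps of that long chord, and conversely that every such closed gap does contribute a term. Once this combinatorial matching is set up, the cohomology computation is purely algebraic. Because the paper works over $\mathbf{k} = \Z_2$ there are no signs to track; over an arbitrary field one would additionally need to match the Koszul signs in the tensor product with the signs prescribed in \autoref{res:zero-dim-diff}.
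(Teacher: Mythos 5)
Your proposal is correct and follows essentially the same route as the paper: since a word's total splitting and total concatenation determine one another, your strata $A_{\bbm}$ coincide with the paper's subspaces $C(\bbc)$, and both arguments identify each stratum with a tensor power of the two-term acyclic complex indexed by the concatenable gaps and conclude by the Künneth formula, with the unconcatable words of short chords spanning the surviving strata. Your write-up just makes explicit (via the subsets $S$ of closed gaps) the identification that the paper dismisses with ``one can readily compute.''
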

\begin{proof}
	Let $\bbc$ be a word in $CE^{*}( \partial \Lambda;V )$, and let $C(
	\bbc )$ be the span of all words $\bbc'$ such that $\bbc_{\text{tc}}' =
	\bbc_{\text{tc}}$. Note that, as a complex, $(CE^{*}( \partial
	\Lambda;V ), \partial_{0})$ splits as a direct sum of the idempotents
	and the summands $C( \bbc )$ for all words $\bbc$ such that $\bbc =
	\bbc_{\text{ts}}$. Let $k$
	be the number of concatenation points of $\bbc_{\text{ts}}$ and note
	that $k=0$ if and only if $\bbc$ is an unconcatable word of short
	chords. In this case, $\partial_{0}$ vanishes on $C( \bbc ) =
	\text{Span}( \bbc )$. On the
	other hand, if $k > 0$, then one can readily compute that $C( \bbc )$
	is as a chain complex isomorphic to the tensor product of $k$ copies of
	the exact complex 
\[\begin{tikzcd}
		{0} & {\textbf{k}} & {\textbf{k}} & {0}  
		\arrow[from=1-1, to=1-2]
		\arrow["\sim",from=1-2, to=1-3]
		\arrow[from=1-3, to=1-4]
\end{tikzcd}\]
	and since we are working over a field, it then follows from the Künneth
	formula that $C(\bbc)$ itself is exact. The result follows.
\end{proof}
	We next compute the cohomology of $CW^{*}( C;V_{\partial \Lambda} )$.

\begin{lemma}
\label{res:wrapped-floer-diff}
	Suppose that $n=1$ and let $C'$ be the set of all null-homotopic
	components of $C$.  By null-homotopy we mean a contraction of the
	component keeping the endpoints in $\partial V_{\partial \Lambda}$.
	Then the inclusion,
	\[
		CW^{*}( C \setminus C';V_{\partial \Lambda} ) \to 
		CW^{*}( C;V_{\partial \Lambda} ) 
	\] 
	is a quasi-isomorphism, and moreover, $\mu_{1}$ vanishes on $CW^{*}( C
	\setminus C';V_{\partial \Lambda} )$.
\end{lemma}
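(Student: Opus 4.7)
The plan hinges on exploiting the one-dimensionality of $\partial V_{\partial \Lambda}$ to give an explicit geometric description of all disks contributing to $\mu_1$, and then combining a direct vanishing argument with an iterative algebraic cancellation.

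First, to prove that $\mu_1$ vanishes on $CW^*(C \setminus C'; V_{\partial \Lambda})$, I would decompose $\mu_1 = \mu_1' + \mu_1''$ as in the construction recalled above and rule out both kinds of contributing disks. A $\mu_1'$-term is a filling disk in $V_{\partial \Lambda}$ with boundary on a component $C_i \in C \setminus C'$ together with its parallel copy, with one positive Reeb chord puncture and passing through the self-intersection $e_i$; collapsing the parallel copy back onto $C_i$ shows this disk exhibits a null-homotopy of a sub-arc of $C_i$ rel boundary, contradicting the hypothesis that $C_i$ is non-null-homotopic. A $\mu_1''$-term relies on a secondary disk in $V_{\partial \Lambda}$ whose boundary lies on $C$ and which has a single positive Reeb chord puncture; the boundary of such a cap is a path in $C_i$ between the two endpoints of the Reeb chord, so its existence again forces the component $C_i$ carrying it to be null-homotopic. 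Hence neither $\mu_1'$ nor $\mu_1''$ has any contribution when all involved components lie in $C \setminus C'$.

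Next, to establish the quasi-isomorphism, I would exhibit an explicit acyclic complement to $CW^*(C \setminus C'; V_{\partial \Lambda})$ inside $CW^*(C; V_{\partial \Lambda})$. For each null-homotopic $C_j \in C'$, by definition $C_j$ together with a short Reeb arc $a_j \subset \partial V_{\partial \Lambda}$ bounds an embedded disk $D_j \subset V_{\partial \Lambda}$; after perturbing $C_j$ to a parallel copy, $D_j$ becomes a filling disk crossing the self-intersection $e_j$, yielding $\mu_1(a_j) = e_j$. The pair $(a_j, e_j)$ is therefore a contractible summand, and an $A_\infty$-analog of Lemma~\ref{res:exact-removal} permits the iterated removal of the pairs $(a_j, e_j)$ together with all remaining generators supported on $C_j$, reducing $CW^*(C; V_{\partial \Lambda})$ to $CW^*(C \setminus C'; V_{\partial \Lambda})$ up to quasi-isomorphism.

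The main obstacle is the rigorous justification of this cancellation, since $a_j$ can appear in the output of $\mu_1''$ applied to longer chords whose primary-disk decomposition uses $a_j$ as the distinguished negative puncture, with further non-distinguished negatives filled by secondary disks on other components of $C'$. To handle this I would filter $CW^*(C; V_{\partial \Lambda})$ by total action of the underlying Reeb chord word and carry out the cancellation on the associated graded, where the obstructing mixed terms drop out for degree reasons. The associated graded then splits as a tensor product of elementary two-term complexes of the form $\textbf{k}\langle a_j \rangle \xrightarrow{\sim} \textbf{k}\langle e_j \rangle$, and a Künneth-style argument in the spirit of Lemma~\ref{res:short-chords-generate} yields the desired acyclicity of the complement, completing the proof.
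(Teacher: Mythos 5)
Your first half is fine and is essentially the paper's own argument: a nonzero $\mu_1'$-term would produce a filling disk exhibiting a null-homotopy of the component carrying the endpoints of the input chord, and a nonzero $\mu_1''$-term would require a secondary cap whose boundary arc lies on a single component (necessarily one carrying an endpoint of the input), again forcing that component to be null-homotopic. Both are excluded when the input lies in $CW^{*}(C\setminus C';V_{\partial\Lambda})$.

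The second half, however, has a genuine gap at the step you flag as the main obstacle. Your plan is to filter by (limit) action and cancel the pairs $(a_j,e_j)$ on the associated graded, claiming it splits into acyclic two-term pieces $\mathbf{k}\langle a_j\rangle \isomto \mathbf{k}\langle e_j\rangle$. But on the wrapped side the \emph{entire} differential is strictly action-decreasing: every $\mu_1'$- or $\mu_1''$-contribution involves disks of positive symplectic area, so $\mu_1(a_j)=e_j$ itself strictly drops the filtration level (this is exactly why, in the proof of \autoref{res:surgery-map-surface}, the limit-action-preserving part of the differential of $CW^{*}(C;V_{\partial\Lambda})$ is said to vanish). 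Consequently the associated graded of any action filtration carries the zero differential; its cohomology is the full underlying vector space, no two-term complexes survive, and the spectral sequence tells you nothing about acyclicity of the complement. The Künneth-style splitting you invoke has no analogue here, since that argument in \autoref{res:short-chords-generate} relies on an action-\emph{preserving} piece $\partial_0$, which the wrapped complex lacks. Relatedly, \autoref{res:exact-removal} is a statement about semi-free dg-algebras and quotients by ideals; $CW^{*}$ is an $A_\infty$-algebra whose product is not free concatenation, so the ``$A_\infty$-analog'' you appeal to is not available off the shelf, and its hypothesis (that $a_j$ not appear in other outputs of $\mu_1$) is precisely what fails. A workable route is to use that $\mu_1$ preserves the pair of components carrying the source and target of a chord, so the span of generators with an endpoint on $C'$ is a direct complement subcomplex, and then verify its acyclicity directly (for instance by an explicit disk count in the universal cover, in the spirit of the surgery-formula proof), which is the content behind the paper's brief appeal to the Riemann mapping theorem and the exactness of the idempotents $e_j$.
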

\begin{proof}
	If a component is null-homotopic, it will together with some segment of
	the boundary form a contractible loop, which bounds some simply
	connected subset of $V_{\partial \Lambda}$.  This boundary segment is a
	Reeb chord $c$ of $\partial C$. There is then by the Riemann mapping
	theorem a unique holomorphic disk which takes $c$ as input, and by
	definition of the differential this disk will output the
	self-intersection point corresponding to the null-homotopic component.
	The self-intersection points act as idempotents in $CW^{*}(
	C;V_{\partial \Lambda} )$, so it follows that the inclusion is a
	quasi-isomorphism.

	To show that $\mu_{1}$ vanishes on $CW^{*}( C \setminus C';V_{\partial
	\Lambda})$, we first note that there are no intersection points between
	the different cores, while the differential vanishes on the idempotents
	(which can be though of as the unique self-intersection points of a
	small push off).  If there is a Reeb chord $c$ such that $\mu_{1}'( c )
	\neq 0$, then $c$ must begin and end at the same component. Since the
	output is non-zero, there will be a filling disk taking $c$ as input
	which outputs an intersection point. This filling disk then gives rise
	to a null-homotopy of the component at which $c$ has its endpoints. On
	the other hand, if $\mu_{1}''( c ) \neq 0$, there will be a partial
	holomorphic building taking $c$ as input, which outputs some Reeb
	chord. In order for the primary disk of this building to be rigid, it
	must have one positive puncture and two negative punctures. The
	building must therefore have a secondary filling disk with a single
	puncture meeting one of the two negative punctures of the primary disk.
	This disk then induces a null-homotopy of one of the components which
	the input has an endpoint on. 
\end{proof}

We now have everything we need to prove the surgery formula.

\begin{proof}[Proof of \autoref{res:surgery-map-surface}]
Recall that an $A_{\infty}$-morphism consists of an infinite sequence of
morphisms $\Phi_{\partial \Lambda}^{i}$. This sequence of maps was defined in
\cite{EL17} by counting cobordism disks.  Since an $A_{\infty}$-morphism is a
quasi-isomorphism if and only if the chain map $\Phi_{\partial \Lambda}^{1}$ is
a quasi-isomorphism of complexes, we will in the following restrict our
attention to this map and denote it by $\Phi_{\partial \Lambda}$ for
simplicity.  We define the \emph{action} of a chord in the boundary of $V$,
$V_{\partial \Lambda}$, or $V_{\partial \Lambda}^{\circ}$ to be the integral of
the contact form along the chord. We define the \emph{limit action} of a chord
to be the limit of its action when the handles and half-handles are shrunk.
Chords at different stages of the shrinking process can be canonically
identified, so this is well-defined.  It follows from Stokes' theorem that this
limit action is a filtration that is respected by the differentials of $CW^{*}(
C;V_{\partial \Lambda} )$ and $CE^{*}(\partial \Lambda;V )$, and that
$\Phi_{\partial \Lambda}$ is filtration preserving.

We write $\Phi_{\partial \Lambda} = \Phi_{\partial \Lambda}' + \Phi_{\partial
\Lambda}''$, where for each chord $c \in CW^{*}( C;V_{\partial \Lambda} )$, the
term $\Phi_{\partial \Lambda}(c)'$ consists of the words in $\Phi_{\partial
\Lambda}( c )$ of limit action equal to that of $c$, and $\Phi_{\partial
\Lambda}''( c )$ consists of the words of strictly lower limit action. Again by
Stokes' theorem, $\Phi_{\partial \Lambda}'$ and $\Phi''_{\partial \Lambda}$ are
the parts of $\Phi_{\partial \Lambda}$ corresponding to unanchored and anchored
cobordism disks respectively. 

We will view $\Phi_{\partial \Lambda}'$ as going between the chain complexes
$CW^{*}( C;V_{\partial \Lambda} )$ and $CE^{*}(\partial \Lambda;V )$ equipped
with the limit action preserving part of their differentials. This limit action
preserving part vanishes for $CW^{*}( C;V_{\partial \Lambda} )$ and for
$CE^{*}(\partial \Lambda;V )$ it is the differential $\partial_{0}$ from above.
We will show that with respect to these differentials, $\Phi_{\partial
\Lambda}'$  is a quasi-isomorphism.

Let $u:D^{2}\setminus \{z_1,\ldots,z_k\} \to V_{\Lambda}^{\circ}$ be a rigid
unanchored cobordism disk, as in the definition of $\Phi_{\partial \Lambda}$.
We claim that no interior points of $D^{2}\setminus \{z_1,\ldots,z_k\}$ get
mapped to $L$. This is indeed the case: The curve $u$ will be a branched cover
and the rigidity of $u$ implies that there are no branch points. The preimage
$u^{-1}\mid_{\text{int} D^{2}}(L)$ of the curve will thus be a smoothly
embedded curve in $D^{2}$, which will then either be a closed loop, or have
endpoints at two of the negative punctures, as illustrated in
\autoref{fig:rigid-curve-preimage}.  
\begin{figure}[!htb]
    \centering
    
    \import{./figures/}{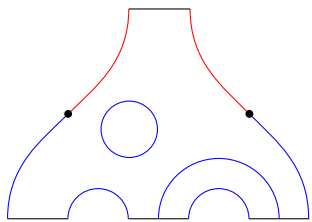}

    \caption{The preimage of $L$ under $u\mid_{\text{int} D^{2}}$ will be a
	    smooth curve whose
	    components are either closed or have endpoints at two negative
	    punctures.}
    \label{fig:rigid-curve-preimage}
\end{figure}
The first situation is impossible as $u$ has no critical points and thus is a
locally injective immersion. The second situation is also impossible. This is
because the half-annuls region in \autoref{fig:rigid-curve-preimage} bounded by
the preimage curve and the boundary will have negative area by Stokes' theorem.
Thus, $u$ cannot map any interior points to $L$. 

We can therefore consider $u$ as a map to the surface $\widetilde{V}_{\partial
\Lambda}^{\circ}$ obtained from 'opening up' $V_{\partial \Lambda}^{\circ}$
along $L$ and inserting two boundary components. (Topologically this is the
same as removing a small open neighborhood of $L$.) The universal cover of
$\widetilde{V}_{\partial \Lambda}^{\circ}$ is a disjoint union of surfaces as
illustrated in the center of \autoref{fig:universal-cover}, either continuing
indefinitely, or ending at a half-handle as to the left or right in the figure.
Chords of $C$ correspond to segments of the upper component of the boundary,
and unconcatable words of short chords of $\Lambda$ correspond to segments of
the bottom component of the boundary, shown in the aforementioned image.  It
thus follows from the Riemann mapping theorem that $\Phi_{\partial \Lambda}'$
is a bijection from the set of Reeb chords on $C$ onto the set of unconcatable
words of short chords, and is hence a quasi-isomorphism by
\autoref{res:short-chords-generate}.

\begin{figure}[!htb]
    \centering
    
    \import{./figures/}{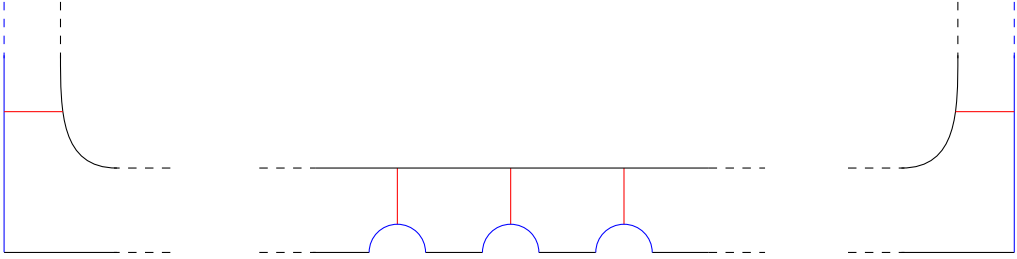}

    \caption{The universal cover of $\widetilde{V}_{\partial \Lambda}^{\circ}$
    	consists of a disjoint union of surfaces as in the center, either
    	continuing indefinitely, or stopping at half-handle as to the left and
    	right.}
    \label{fig:universal-cover}
\end{figure}

Finally, we consider the mapping cone  $\text{Cone}(\Phi_{\partial \Lambda})$
of $\Phi_{\partial \Lambda}$.  Since $\Phi_{\partial \Lambda}$ preserves the
filtration, $\text{Cone}(\Phi_{\partial \Lambda})$ inherits the limit action
filtration. Considering the spectral sequence arising from this filtration, one
sees that the first page is isomorphic to $\text{Cone}(\Phi_{\partial
\Lambda}')$. Since  $\Phi_{\partial \Lambda}'$ is a quasi-isomorphism, the
spectral sequence vanishes on the second page, making $\Phi_{\partial \Lambda}$
a quasi-isomorphism.
\end{proof}
	Using the above, we can now prove \autoref{res:zero-dim-leg-min-mod}
	from the introduction.  
\begin{proof}[Proof of \autoref{res:zero-dim-leg-min-mod}]
	The assumption that no co-core is null-homotopic guarantees that
	$\partial$ vanishes on the short chords and hence they are all cycles.
	If we then consider the inclusion of the chain complex of the
	unconcatable words of short chords into $CE^{*}(\partial \Lambda;V )$,
	this is by \autoref{res:short-chords-generate} a quasi-isomorphism on
	the action preserving level, and the differential is action decreasing,
	so it follows that it is a quasi-isomorphism onto the full chain
	complex (consider the mapping cone of the inclusion, filter by action,
	and take the spectral sequence of this filtration).  This shows that
	the unconcatable words of short chords form a basis of $H^{*}CE(
	\partial \Lambda;V )$. 	

	The multiplication is induced by the multiplication on $CE^{*}(
	\partial \Lambda;V )$, and comes from the boundary $\partial( c_2 *
	c_1)=c_2c_1 + \partial_{-1}( c_2 * c_1 )$, which is satisfied whenever
	$c_{2}$ and $c_{1}$ can be concatenated. Note that the second term
	indeed is a cycle, since $\partial_{-1}^{2} = 0$ holds and since
	$\partial_0( c_2c_1 ) = 0$ by the assumption that the $c_{i}$ are short
	chords. Since the differential of
	$CW^{*}( C;V_{\partial \Lambda} )$ is zero in view of
	\autoref{res:wrapped-floer-diff} (by the assumption that no co-cores
	are null-homotopic) the higher operations can be obtained using
	\autoref{res:surgery-map}. They have been  described combinatorially in
	\cite{HKK17}, and are also easy to see in our setup.  If one takes a
	filling disk and shrinks the handles, its input and output will
	converge to a word whose total concatenation goes once around the
	boundary of $V$. When the number of inputs of a partial holomorphic
	building is greater than two, the symplectization disk can only have
	one positive puncture, as it would otherwise be non-rigid.  There
	exists such a rigid unanchored symplectization disk taking a chord $c$
	as input and outputting a word $c_2c_1$ if and only if $c=c_2*c_1$.
	Since any handle attachment results in a space not homeomorphic to a
	disk, the symplectization disks do not have any anchors. Taken
	together, these observations give the desired result. See
	\autoref{fig:higher-operations}.
\begin{figure}[!htb]
    \centering
    
    \import{./figures/}{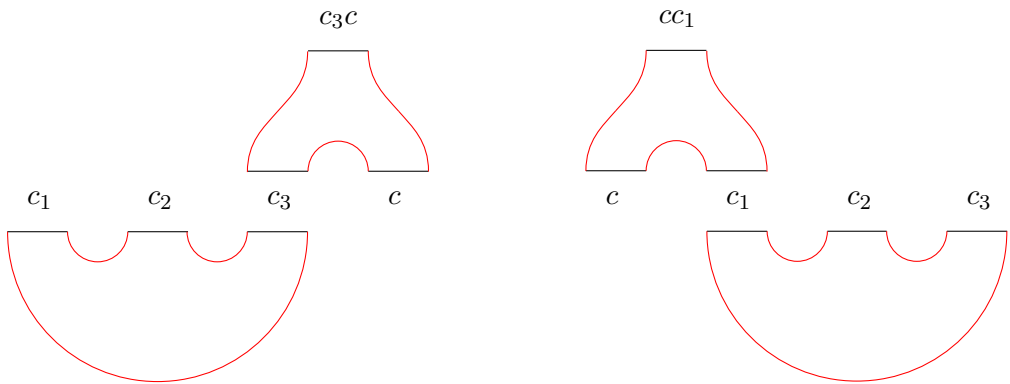}

    \caption{The partial holomorphic buildings contributing to the higher
    operations.}
    \label{fig:higher-operations}
\end{figure}

	Finally, the last statement follows from
	\autoref{res:wrapped-floer-diff} and \autoref{res:commutative-square}
	combined with the fact that the horizontal arrows in
	\autoref{res:commutative-square} are quasi-isomorphisms by
	\autoref{res:surgery-map}.
\end{proof}

\section{Chekanov--Eliashberg dg-algebras for singular Legendrians}
In this section, we give an overview of the Asplund--Ekholm construction of the
Chekanov--Eliashberg dg-algebra for singular Legendrians, and how to compute it
in $\R^{3}$ using the combinatorial description by An--Bae \cite{AB20}. The
definition immediately gives singular versions of the surgery and cobordism
maps for smooth Legendrians. We also review the singular Legendrian
Reidemeister moves and the Ng resolution. 

\subsection{Chekanov--Eliashberg dg-algebras for singular Legendrians}
\label{ssec:ceforsing}
This material follows \cite{AE21}, to which we refer for details.  Let $W$ be a
$2n$-dimensional Weinstein manifold, and let $V \subset \partial W$ be a
$(2n-2)$-dimensional Weinstein hypersurface. We will also allow the case
$W=B^{2n} \setminus \text{pt}$ for some $\text{pt} \in \partial B^{2n} =
S^{2n-1}$. The latter corresponds to the Weinstein manifold with a stop at a
point in the boundary, and $\partial W = \R^{2n+1}$ can be identified with the
standard contact vector space, i.e. the standard contact Darboux ball. We
denote the singular Legendrian skeleton of $V$ as $\Lambda$, and write $V_{0}$
for the subcritical part of $V$ and $\partial \Lambda \subset \partial V_{0}$
for the Legendrian attaching attaching spheres in the handle decomposition of
$V$. We construct a cobordism $W_{V}^{0}$ as follows.  Let
$D^{*}_{\epsilon}[-1,1]$ be an $\epsilon$-disk subbundle of $T^{*}[-1,1]$. Then
$W_{V}^{0}$ is obtained by attaching the handle $V_{0}\times
D^{*}_{\epsilon}[-1,1]$ to $W \sqcup (\R\times ( \R\times V ))$ along
$V_{0}\times D^{*}_{\epsilon}[-1,1]\mid_{\{-1,1\}} \subset \partial W \sqcup (
\R\times V )$. We have $\partial_{-}W_{V_{0}} = \R \times V$.  We construct a
smooth Legendrian link $\Sigma$ of $n$-spheres in $\partial W_{V}^{0}$ by
joining the two copies of the top strata $\Lambda \setminus V_{0}$ in $\partial
W$ and $\R \times V$ across the handle by $\partial \Lambda \times [-1,1]$, as
shown in \autoref{fig:singular-definition}. For each Legendrian handle $\Pi$ in
$\Lambda$ we denote the corresponding Legendrian attaching sphere by $\Sigma(
\Pi ) \subset \Sigma$.
\begin{figure}[!htb]
    \centering
    
\begingroup%
  \makeatletter%
  \providecommand\color[2][]{%
    \errmessage{(Inkscape) Color is used for the text in Inkscape, but the package 'color.sty' is not loaded}%
    \renewcommand\color[2][]{}%
  }%
  \providecommand\transparent[1]{%
    \errmessage{(Inkscape) Transparency is used (non-zero) for the text in Inkscape, but the package 'transparent.sty' is not loaded}%
    \renewcommand\transparent[1]{}%
  }%
  \providecommand\rotatebox[2]{#2}%
  \newcommand*\fsize{\dimexpr\f@size pt\relax}%
  \newcommand*\lineheight[1]{\fontsize{\fsize}{#1\fsize}\selectfont}%
  \ifx\svgwidth\undefined%
    \setlength{\unitlength}{285.09281429bp}%
    \ifx\svgscale\undefined%
      \relax%
    \else%
      \setlength{\unitlength}{\unitlength * \real{\svgscale}}%
    \fi%
  \else%
    \setlength{\unitlength}{\svgwidth}%
  \fi%
  \global\let\svgwidth\undefined%
  \global\let\svgscale\undefined%
  \makeatother%
  \begin{picture}(1,0.43147167)%
    \lineheight{1}%
    \setlength\tabcolsep{0pt}%
    \put(0,0){\includegraphics[width=\unitlength,page=1]{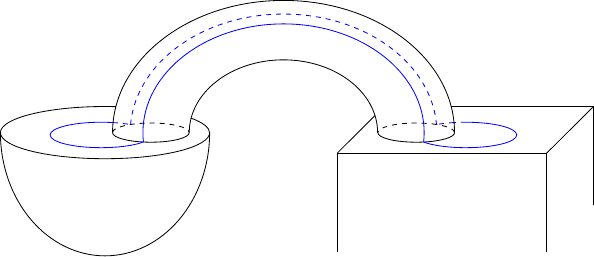}}%
    \put(0.12677687,0.33781367){\color[rgb]{0,0,0}\makebox(0,0)[lt]{\lineheight{1.25}\smash{\begin{tabular}[t]{l}\color{blue}\footnotesize$\Sigma( \Lambda )$\end{tabular}}}}%
    \put(-0.04862647,0.2488417){\color[rgb]{0,0,0}\makebox(0,0)[lt]{\lineheight{1.25}\smash{\begin{tabular}[t]{l}\footnotesize$\partial W$\end{tabular}}}}%
    \put(0.1557651,0.08413319){\color[rgb]{0,0,0}\makebox(0,0)[lt]{\lineheight{1.25}\smash{\begin{tabular}[t]{l}\footnotesize$W$\end{tabular}}}}%
    \put(0.65896484,0.08413319){\color[rgb]{0,0,0}\makebox(0,0)[lt]{\lineheight{1.25}\smash{\begin{tabular}[t]{l}\footnotesize$\R \times ( \R \times V )$\end{tabular}}}}%
  \end{picture}%
\endgroup%

    \caption{The cobordism $W_{V}^{0}$ in the Asplund--Ekholm construction.}
    \label{fig:singular-definition}
\end{figure}

\begin{definition}[{\cite[Definition 3.2]{AE21}}]
The \emph{singular Chekanov--Eliashberg dg-algebra} of $\Lambda$ in $\partial
W$, denoted by $CE^{*}( \Lambda;V_{0};W)$, is the ordinary smooth
Chekanov--Eliashberg dg-algebra as in \autoref{ssec:smooth-ce} of $\Sigma$ in
$\partial W_{V}^{0}$, i.e.
\[
	CE^{*}( \Lambda;V_{0};W ):=CE^{*}( \Sigma;W_{V}^{0} ).
\] 
\end{definition}

\begin{lemma}[\cite{AE21}]
\label{res:ce-sing-subalg}
	The Chekanov--Eliashberg dg-algebra 
	$CE^{*}( \Lambda;V_{0};W )$ contains $CE^{*}( \partial
	\Lambda;V_{0} )$ as a dg-subalgebra.
\end{lemma}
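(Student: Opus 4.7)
The plan is to exhibit an embedding of $CE^{*}(\partial \Lambda;V_{0})$ into $CE^{*}(\Lambda;V_{0};W) = CE^{*}(\Sigma;W_{V}^{0})$ by identifying the Reeb chords of $\partial \Lambda \subset \partial V_{0}$ with a distinguished subset of the Reeb chords of $\Sigma$, and then verifying that the differential of the ambient algebra preserves the subspace these chords span.

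First, I would identify the generators. By construction, $\Sigma$ contains the cylindrical piece $\partial \Lambda \times [-1,1]$ running across the handle $V_{0}\times D^{*}_{\epsilon}[-1,1]$. The contact form on $\partial W_{V}^{0}$ can be chosen so that a neighborhood of this cylinder is contactomorphic to a neighborhood of $\partial V_{0}$ inside its contactization, with Reeb dynamics matching those of $\partial V_{0}$. The Reeb chords of $\Sigma$ whose interior lies in this neighborhood are then in canonical bijection with the Reeb chords of $\partial \Lambda \subset \partial V_{0}$, and the idempotents associated to the connected components of $\partial \Lambda$ sit among the idempotents of $CE^{*}(\Sigma;W_{V}^{0})$. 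This yields the inclusion as associative unital algebras over the respective idempotent rings.

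Second, I would show that the differential of $CE^{*}(\Sigma;W_{V}^{0})$ applied to a chord $c$ of this \emph{handle type} outputs a word in chords of the same type. To this end, arrange the contact form so that every handle-type chord has strictly smaller action than every Reeb chord of $\Sigma$ located in the $\partial W$- or $\R\times V$-region away from the handle. By Stokes' theorem, for any rigid pseudo-holomorphic disk asymptotic to $c$ at its positive puncture, the sum of the actions of the negative asymptotes is at most the action of $c$, so each negative asymptote must itself be a handle-type chord. The restricted differential then coincides with that of $CE^{*}(\partial \Lambda;V_{0})$: via the neighborhood identification above, the disks contributing to $\partial c$ correspond to the disks in the symplectization of $\partial V_{0}$ with boundary on $\partial \Lambda$, possibly with anchors in $V_{0} \subset W_{V}^{0}$, which is precisely what $\partial_{0}+\partial_{-1}$ of $CE^{*}(\partial \Lambda;V_{0})$ counts.

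The main obstacle is arranging the local model near the handle so that the action separation holds \emph{and} the moduli spaces of holomorphic disks on the two sides are in genuine bijection. Once a suitable contact form and an adapted cylindrical almost-complex structure on $\R\times \partial W_{V}^{0}$ compatible with this model have been fixed, the energy estimate and the identification of moduli spaces become routine, and together they yield the claimed dg-subalgebra structure.
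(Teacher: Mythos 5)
Your overall strategy (identify the chords of $\Sigma$ lying over the handle $V_{0}\times D^{*}_{\epsilon}[-1,1]$ with the Reeb chords of $\partial \Lambda \subset \partial V_{0}$, then show the differential of $CE^{*}(\Sigma;W_{V}^{0})$ cannot output non-handle chords) is the right one, and it is essentially the argument the paper defers to in \cite[Section 3.3]{AE21}. However, your second step contains a genuine gap: you cannot ``arrange the contact form so that every handle-type chord has strictly smaller action than every Reeb chord of $\Sigma$ located away from the handle.'' The subalgebra $CE^{*}(\partial \Lambda;V_{0})$ is infinitely generated precisely because the Reeb chords of $\partial \Lambda$ in $\partial V_{0}$ have unbounded action (in the surface case these are the chords $c_{ij}^{p}$ winding $p$ times around $\partial V_{0}$, with action growing linearly in $p$), whereas the chords of $\Sigma$ outside the handle region have action bounded above. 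Shrinking the handle rescales all handle-type actions by a common factor but does not change the fact that they are unbounded, so for any fixed contact form infinitely many handle-type chords are longer than the exterior chords, and the Stokes estimate alone says nothing about the differential of those generators.

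To close this you need one of two additional inputs. Either a geometric confinement statement -- a maximum-principle/monotonicity argument showing that any pseudo-holomorphic disk whose positive puncture is at a chord contained in the handle region stays in (the symplectization of) that region, independently of action, which is how the cited argument in \cite{AE21} proceeds -- or a limit argument of the kind this paper uses elsewhere: shrink the handle, check that the relevant rigid counts are independent of the handle size under the canonical identification of chords, and then apply Stokes at each stage to conclude that the ``limit action'' of every output is bounded by that of the input, which forces all outputs to be handle-type chords. As written, your energy estimate applies only to the finitely many handle-type chords that happen to be shorter than the shortest exterior chord, so the claimed closure under the differential is not established for the bulk of the generators. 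The moduli-space identification in your final step is fine as a sketch once confinement is in place.
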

\begin{proof}
	See \cite[Section 3.3]{AE21}.
\end{proof}
The definition immediately gives an analogue of the surgery formula for
singular Legendrians.  We write $W_{V}:=W_{V,\Sigma}^{0}$ for the Weinstein
manifold obtained by handle attachment to $W_{V}^{0}$ to along $\Sigma$, and
denote the co-cores of $W_{V}$ as $C$. 

\begin{theorem}[{\cite[Theorem 1.1]{AE21}}]
\label{res:sing-surgery-map}
	There is an $A_{\infty}$-quasi-isomorphism	
	\[
		CE^{*}( \Lambda;V_{0};W ) \isomto CW^{*}( C;W_{V} ).
	\] 
\end{theorem}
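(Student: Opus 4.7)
The proof is essentially tautological, given the definition of the singular Chekanov--Eliashberg algebra together with the smooth surgery formula. The plan is to unpack the Asplund--Ekholm construction and apply Theorem~\ref{res:surgery-map} directly to the smooth Legendrian $\Sigma$ living in $\partial W_V^0$.

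First I would recall that by construction $CE^*(\Lambda;V_0;W) := CE^*(\Sigma;W_V^0)$, where $\Sigma \subset \partial W_V^0$ is the smooth Legendrian link of spheres obtained by joining the two copies of $\Lambda \setminus V_0$ across the handle $V_0 \times D^*_\epsilon[-1,1]$, as in \autoref{fig:singular-definition}. Moreover, by the notation introduced just before the theorem, $W_V$ is precisely the Weinstein manifold $(W_V^0)_\Sigma$ obtained from $W_V^0$ by attaching critical Weinstein handles along $\Sigma$, and the co-cores $C$ are exactly the co-cores of these new handles. So the pair $(C, W_V)$ in the statement is nothing but the pair that appears on the wrapped side of the smooth surgery formula applied to $(\Sigma, W_V^0)$.

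Applying Theorem~\ref{res:surgery-map} to this pair then yields an $A_\infty$-quasi-isomorphism
\[
\Phi_\Sigma \colon CW^*(C;W_V) \isomto CE^*(\Sigma;W_V^0),
\]
which, under the definitional identification $CE^*(\Sigma;W_V^0) = CE^*(\Lambda;V_0;W)$, is exactly an $A_\infty$-quasi-isomorphism between $CW^*(C;W_V)$ and $CE^*(\Lambda;V_0;W)$. To get a map in the direction stated in the theorem one invokes the standard fact that every $A_\infty$-quasi-isomorphism admits a homotopy inverse, and passes to such an inverse.

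The whole point is that all the work has been pushed into Theorem~\ref{res:surgery-map}: for $n > 1$ this is \cite[Theorem 2]{EL17}, while for $n = 1$ it is \autoref{res:surgery-map-surface}, which was established earlier in the paper by the explicit cohomology computations via the $\partial_0$-filtration and the combinatorial description of short chords. So the only genuine obstacle sits in the surface case of the smooth surgery formula; once that is in place, the singular surgery formula stated here is a formal consequence of unwinding the Asplund--Ekholm definition.
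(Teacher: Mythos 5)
Your proposal matches the paper's argument exactly: the paper's proof is the one-line observation that the statement follows from Theorem~\ref{res:surgery-map} applied to $\Sigma \subset \partial W_V^0$ together with the definition $CE^*(\Lambda;V_0;W) := CE^*(\Sigma;W_V^0)$, which is precisely what you unpack (your extra remark about passing to a homotopy inverse to fix the direction of the arrow is a reasonable clarification the paper leaves implicit).
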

\begin{proof}
	Follows from \autoref{res:surgery-map} and the definition of $CE^{*}(
	\Lambda;V_{0};W )$.
\end{proof}
	This surgery formula is the same as the one in \cite[Conjecture
	3]{EL17} \cite{Syl19}. The wrapped Floer cohomology of $W_{V}$, can
	equivalently be thought of as the partially wrapped Floer cohomology of
	$W$ stopped at $V$. 
\begin{remark}	
	A smooth Legendrian $\Lambda$ can be considered as a singular
	Legendrian with a single top handle, and the singular
	Chekanov--Eliashberg dg-algebra is then quasi-isomorphic to the
	Chekanov--Eliashberg dg-algebra with based loop space coefficients, see
	\cite[Theorem 1.2, Section 4]{AE21}. 
\end{remark}

	We will also consider a relative version of the algebra. 

	\begin{definition}
	Let $\Pi \subset \Lambda$ be a subset of the Legendrian cores of the
	top handles of $V$. Let $W_{V,\Sigma( \Pi )}^{0}$ be the cobordism
	obtained by handle attachment along $\Sigma( \Pi )$ in $W_{V}^{0}$. The
	Chekanov--Eliashberg dg-algebra of $\Lambda \setminus \Pi$ \emph{relative}
	$\Pi$ is 
	\[
		CE^{*}( \Lambda \setminus \Pi;V_{0,\partial \Pi};W ):= 
		CE^{*}( \Sigma \setminus \Sigma_{\Pi};W_{V,\Sigma_\Pi}^{0}).  
	\]
	\end{definition}
	This is a slightly different construction than in
	\cite[Section 6.1]{AE21}. However, since there are no Reeb chords in
	the negative part of $\partial W^{0}_{V,\Sigma( \Pi )}$ that
	corresponds to $V \times \R$, and no pseudo-holomorphic curves can
	enter there (\cite[Lemma 3.5]{AE21}), it results in a dg-algebra that
	is isomorphic to the original relative algebra construction by
	Asplund--Ekholm. This definition gives an analogue of the cobordism map.

\begin{theorem}
\label{res:sing-cobordism-map}
	There is a dg-algebra morphism
	\[
		CE^{*}( \Lambda \setminus \Pi;V_{0,\partial \Pi};W ) \to 
		CE^{*}( \Lambda;V_{0};W )
	\]
	which is a quasi-isomorphism onto 
	$CE^{*}( \Lambda;V_{0};W )[\Lambda \setminus \Pi,\Lambda \setminus
	\Pi]$.
\end{theorem}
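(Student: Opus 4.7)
The plan is to reduce the statement to the smooth cobordism map, Theorem \ref{res:cobordism-map}, which has already been established. By the Asplund--Ekholm definition, the relative singular Chekanov--Eliashberg algebra on the left is just a smooth Chekanov--Eliashberg algebra:
\[
	CE^{*}(\Lambda \setminus \Pi; V_{0,\partial \Pi}; W) = CE^{*}(\Sigma \setminus \Sigma_{\Pi}; W^{0}_{V,\Sigma_{\Pi}}),
\]
and the right hand side is $CE^{*}(\Lambda; V_0; W) = CE^{*}(\Sigma; W_V^0)$. By construction, $W^{0}_{V,\Sigma_{\Pi}}$ is obtained from $W^{0}_{V}$ by attaching Weinstein handles along the subset $\Sigma_{\Pi} \subset \Sigma$ of smooth Legendrian attaching spheres, and $\Sigma \setminus \Sigma_{\Pi}$ is viewed as a Legendrian in the positive boundary of $W^{0}_{V,\Sigma_{\Pi}}$. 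This is precisely the setup of Theorem \ref{res:cobordism-map} with $\Sigma$ playing the role of $\Lambda$ and $\Sigma_{\Pi}$ playing the role of $\Pi$.

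First I would invoke Theorem \ref{res:cobordism-map} to obtain the dg-algebra morphism
\[
	\Psi_{\Sigma_{\Pi}}: CE^{*}(\Sigma \setminus \Sigma_{\Pi}; W^{0}_{V,\Sigma_{\Pi}}) \to CE^{*}(\Sigma; W_V^0),
\]
and define the map in the statement to be $\Psi_{\Sigma_{\Pi}}$ under the identifications above. Theorem \ref{res:cobordism-map} then immediately gives that this map is a quasi-isomorphism onto the subalgebra $CE^{*}(\Sigma; W_V^0)[\Sigma \setminus \Sigma_{\Pi}, \Sigma \setminus \Sigma_{\Pi}]$ consisting of words of Reeb chords with both endpoints among components of $\Sigma \setminus \Sigma_{\Pi}$.

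Next I would verify the bookkeeping of idempotents: there is a bijection between top handles of $\Lambda$ (i.e. components of $\Lambda \setminus V_0$) and connected components of the smooth link $\Sigma$, by the construction of $\Sigma$ which glues the two copies of $\Lambda \setminus V_0$ across the handle $V_0 \times D^{*}_{\epsilon}[-1,1]$ via $\partial \Lambda \times [-1,1]$. Under this bijection, $\Sigma_{\Pi}$ corresponds to $\Pi \subset \Lambda$, so the subalgebras
\[
	CE^{*}(\Sigma; W_V^0)[\Sigma \setminus \Sigma_{\Pi}, \Sigma \setminus \Sigma_{\Pi}] \quad \text{and} \quad CE^{*}(\Lambda; V_0; W)[\Lambda \setminus \Pi, \Lambda \setminus \Pi]
\]
agree. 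The only mild subtlety is that the relative algebra as defined here is slightly less general than \cite[Section 6.1]{AE21}, but as noted just before the statement, the absence of Reeb chords and pseudo-holomorphic curves in the negative end $V \times \R$ (by \cite[Lemma 3.5]{AE21}) implies the two give isomorphic algebras, so the smooth cobordism theorem applies to our version.

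The main obstacle, such as it is, is verifying this last bookkeeping step — that the identification of relative singular algebras with smooth Chekanov--Eliashberg algebras on $W^{0}_{V,\Sigma_{\Pi}}$ is genuinely an isomorphism of dg-algebras (not just an abstract quasi-isomorphism), so that the subalgebra decomposition transports correctly. Granted that, the statement follows formally from Theorem \ref{res:cobordism-map} without any new pseudo-holomorphic curve analysis.
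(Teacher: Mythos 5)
Your proposal is correct and is essentially the paper's own argument: the paper likewise deduces \autoref{res:sing-cobordism-map} directly from \autoref{res:cobordism-map} applied to $\Sigma$ and $\Sigma_{\Pi}$ in $W_{V}^{0}$, using the definitions $CE^{*}( \Lambda;V_{0};W )=CE^{*}( \Sigma;W_{V}^{0} )$ and $CE^{*}( \Lambda \setminus \Pi;V_{0,\partial \Pi};W )=CE^{*}( \Sigma \setminus \Sigma_{\Pi};W_{V,\Sigma_{\Pi}}^{0} )$. Your extra bookkeeping about the bijection between top handles of $\Lambda$ and components of $\Sigma$ is a fine (if implicit in the paper) check and introduces no gap.
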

\begin{proof}
	Follows from \autoref{res:cobordism-map} and the definition of $CE^{*}(
	\Lambda;V_{0};W )$ and $CE^{*}( \Lambda \setminus \Pi;V_{0,\partial
	\Pi};W )$.
\end{proof}
As suggested by the notation, there is also a relative version of
\autoref{res:ce-sing-subalg}.
\begin{lemma}[\cite{AE21}]
	The Chekanov--Eliashberg dg-algebra $CE^{*}( \Lambda \setminus
	\Pi;V_{0,\partial \Pi};W )$ contains $CE^{*}( \partial \Lambda
	\setminus \partial \Pi;V_{0, \partial \Pi} )$ as a dg-subalgebra.
\end{lemma}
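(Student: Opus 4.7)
The plan is to reduce this to the non-relative lemma \autoref{res:ce-sing-subalg} by unfolding the definition. By definition,
\[
	CE^{*}( \Lambda \setminus \Pi; V_{0,\partial \Pi}; W ) = CE^{*}( \Sigma \setminus \Sigma_\Pi; W^{0}_{V,\Sigma_\Pi} ),
\]
so we are working with an ordinary smooth Chekanov--Eliashberg algebra of a smooth Legendrian in a genuine Weinstein manifold. The Reeb chords of $\Sigma \setminus \Sigma_\Pi$ partition into two disjoint collections: those whose underlying Reeb chord in the contact boundary is supported in the subcritical piece $\partial V_{0,\partial \Pi}$ (appearing in the negative end $\R \times V$ of the cobordism after the $\Pi$-handles have been attached), and those whose endpoints lie on the top strata $\Lambda \setminus \Pi$. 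The first collection is in canonical bijection with the generators of $CE^{*}( \partial \Lambda \setminus \partial \Pi; V_{0,\partial \Pi} )$, giving the sought-after inclusion at the level of the underlying path algebras.

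Next, I would show that the subspace generated by chords of the first type is closed under the differential. This is the same confinement statement that underlies \autoref{res:ce-sing-subalg}: a rigid pseudo-holomorphic disk in $\R \times \partial W^{0}_{V,\Sigma_\Pi}$ (possibly with anchors in $W^{0}_{V,\Sigma_\Pi}$) whose positive puncture asymptotes to a Reeb chord in $\partial V_{0,\partial \Pi}$ is, by the standard SFT action/Stokes estimate together with a monotonicity argument near the subcritical locus, forced to have all of its negative asymptotics again in $\partial V_{0,\partial \Pi}$ and all of its anchors in $V_{0,\partial \Pi}$. This is precisely the content of \cite[Section 3.3]{AE21}, and that argument is local in the neighborhood of the subcritical piece, so it applies verbatim after performing the handle attachment along $\Pi$.

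Finally, once the confinement is in place, the count of such disks visibly agrees with the count defining the differential on $CE^{*}( \partial \Lambda \setminus \partial \Pi; V_{0,\partial \Pi} )$, so the inclusion identifies the latter with a dg-subalgebra of the former. The main obstacle is the confinement step, but since it only depends on the local geometry near the subcritical region and is insensitive to which of the top handles at $\Lambda$ have been filled in by the cobordism, it transfers from the absolute lemma without modification.
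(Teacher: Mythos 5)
Your proposal is correct and follows essentially the same route as the paper, which for this relative statement simply invokes the confinement argument of \cite[Section 3.3]{AE21} and observes (as you do) that it is local near the subcritical piece and hence unaffected by the handle attachment along $\Sigma(\Pi)$. One small slip: the chords of $\partial\Lambda\setminus\partial\Pi$ live in the boundary of the handle region $V_{0,\partial\Pi}\times D^{*}_{\epsilon}[-1,1]$ rather than in the negative end $\R\times V$ of the cobordism, where in fact there are no Reeb chords at all and into which no holomorphic curves can enter (\cite[Lemma 3.5]{AE21}); this does not affect your argument.
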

\begin{proof}
	See \cite[Section 3.3]{AE21}.
\end{proof}
Asplund--Ekholm give a surgery presentation of this algebra. Since
\autoref{res:cobordism-map} fails to be an isomorphism for $n=2$ this case
needs to be treated separately.

\begin{lemma}[{\cite[Proposition 6.2]{AE21}}]
\label{res:surgery-presentation}
	For $n > 2$, the algebra $CE^{*}( \Lambda \setminus \Pi;V_{0,\partial
	\Pi};W)$ is generated by composable words of Reeb chords of $\Lambda$
	in $\partial W$ and $\partial \Lambda$ in $V_{0}$, without an endpoint
	on $\Pi$ or $\partial \Pi$. The subalgebra $CE^{*}( \partial \Lambda
	\setminus \partial \Pi; V_{0,\partial \Pi})$ is generated by composable
	words of Reeb chords of $\partial \Lambda$ in $V_{0}$, without
	endpoints on $\partial \Pi$. For $n=2$, the algebra is quasi-isomorphic
	to the algebra described above, and isomorphic to the algebra generated
	by composable words of Reeb chords of $\Lambda$ in $\partial W$ and
	$\partial \Lambda \setminus \partial \Pi$ in $V_{0,\partial \Pi}$,
	where the words have no endpoint on $\Pi$.  
\end{lemma}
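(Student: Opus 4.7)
The plan is to unpack the definition $CE^{*}(\Lambda \setminus \Pi;V_{0,\partial\Pi};W) = CE^{*}(\Sigma \setminus \Sigma_{\Pi};W^{0}_{V,\Sigma_{\Pi}})$ and identify the Reeb chord generators on the right-hand side. These are Reeb chords of the smooth Legendrian $\Sigma \setminus \Sigma_{\Pi}$ in the contact boundary of $W^{0}_{V,\Sigma_{\Pi}}$, so the problem reduces to an analysis of the smooth theory in an appropriate Weinstein cobordism.

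First I would describe the Reeb chords of $\Sigma$ in $\partial W^{0}_{V}$, \emph{before} attaching handles at $\Sigma_{\Pi}$. By choosing the Asplund--Ekholm handle $V_{0}\times D^{*}_{\epsilon}[-1,1]$ sufficiently thin and using the localization \cite[Lemma 3.5]{AE21} to rule out curves entering $V \times \R$, the chords split cleanly into two families: chords of $\Lambda$ in $\partial W$ coming from the top stratum $\Lambda \setminus V_{0}$, and chords of $\partial \Lambda$ in $V_{0}$ supported in the joining strip $\partial \Lambda \times [-1,1]$. This is exactly the decomposition underlying the dg-subalgebra of \autoref{res:ce-sing-subalg}.

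Next, for $n > 2$, the attaching spheres $\Sigma_{\Pi}$ are $(n-1)$-spheres in a $(2n-1)$-dimensional contact manifold with $n-1 \geq 2$, so by the remark following \autoref{res:surgery-map} the cobordism formula \autoref{res:cobordism-map} applied to the pair $(\Sigma,\Sigma_{\Pi})$ realizes $CE^{*}(\Sigma \setminus \Sigma_{\Pi};W^{0}_{V,\Sigma_{\Pi}})$ as an action-filtered chain-level isomorphism onto the subalgebra of $CE^{*}(\Sigma;W^{0}_{V})$ spanned by words with endpoints on $\Sigma \setminus \Sigma_{\Pi}$. Combined with the chord identification of the previous paragraph, this produces the desired description of generators. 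The subalgebra statement then follows by applying \autoref{res:ce-sing-subalg} to the pair $(\Lambda \setminus \Pi,V_{0,\partial\Pi})$, together with the observation that for $n > 2$ the spheres $\partial \Pi \subset \partial V_{0}$ are $(n-2)$-spheres in a $(2n-3)$-dimensional contact manifold with $n-2 \geq 1$, so that by a second application of \autoref{res:cobordism-map} chords of $\partial \Lambda \setminus \partial \Pi$ in $V_{0,\partial\Pi}$ correspond bijectively to chords of $\partial \Lambda$ in $V_{0}$ away from $\partial \Pi$.

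For $n = 2$, the surgery at $\Sigma_{\Pi}$ (1-spheres in a 3-dimensional contact manifold) is still non-degenerate, so the argument above yields a direct isomorphism between $CE^{*}(\Lambda \setminus \Pi;V_{0,\partial\Pi};W)$ and the algebra generated by chords of $\Lambda$ in $\partial W$ (without endpoints on $\Pi$) together with chords of $\partial \Lambda \setminus \partial \Pi$ in $V_{0,\partial\Pi}$; this gives the second (isomorphism) description in the statement. To pass from this to the first description one must identify chords of $\partial \Lambda \setminus \partial \Pi$ in $V_{0,\partial\Pi}$ with chords of $\partial \Lambda$ in $V_{0}$ without endpoints on $\partial \Pi$, but now $\partial \Pi$ consists of $0$-spheres in a $1$-dimensional contact manifold, so \autoref{res:surgery-map-surface} (and its cobordism analogue, proved in the same way) produces only a \emph{quasi-isomorphism} on the relevant subalgebras rather than a bijection of generators. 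The main obstacle is precisely to keep track of which of the two surgery steps is an action-filtered isomorphism and which is only a quasi-isomorphism, as this is exactly what forces the two formulations in the $n = 2$ case.
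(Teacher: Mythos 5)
You should note at the outset that the paper does not prove this lemma at all: it is imported from \cite[Proposition 6.2]{AE21}, so there is no in-paper argument to compare against, and your proposal has to be measured against the argument in that reference. Your reconstruction has the right architecture, and it locates the dichotomy in exactly the right place: the handle attachment along $\Sigma(\Pi)$ happens in the $(2n-1)$-dimensional contact manifold $\partial W^{0}_{V}$ and is unproblematic, while the internal attachment along $\partial\Pi$ happens in $\partial V_{0}$, which is $1$-dimensional precisely when $n=2$, where only the quasi-isomorphism statements of the type proved in \autoref{res:surgery-map-surface} are available instead of a chord-level identification. That is indeed what forces the two formulations in the $n=2$ case.

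Two points need repair. First, the logical direction: \autoref{res:cobordism-map} is stated only as a quasi-isomorphism onto the subalgebra $CE^{*}(\Sigma;W^{0}_{V})[\Sigma\setminus\Sigma_{\Pi},\Sigma\setminus\Sigma_{\Pi}]$, so it cannot by itself deliver a description of the \emph{generators} of $CE^{*}(\Sigma\setminus\Sigma_{\Pi};W^{0}_{V,\Sigma_{\Pi}})$. The generator-level statement --- that Reeb chords after a critical handle attachment are in action- and grading-preserving bijection with composable words of chords of the original link, with interior junctions on the attaching spheres and endpoints off them --- is the input coming from the Reeb-dynamics analysis of \cite{BEE12,EL17}; the remark following \autoref{res:surgery-map} records a consequence of that analysis for the surgery map, it is not a substitute for it. Your proof should invoke that chord description directly, once for the attachment along $\Sigma(\Pi)$ and once internally along $\partial\Pi$. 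Second, in two places you drop the words: the claim that chords of $\partial\Lambda\setminus\partial\Pi$ in $V_{0,\partial\Pi}$ correspond bijectively to chords of $\partial\Lambda$ in $V_{0}$ away from $\partial\Pi$ is false as written, since after attaching handles along $\partial\Pi$ there are new chords running through those handles; the correct statement is a bijection with composable words of chords of $\partial\Lambda$ with junctions on $\partial\Pi$ and ends off $\partial\Pi$. Likewise, for $n=2$ the target of your claimed isomorphism must be generated by composable words of chords of $\Lambda$ and of $\partial\Lambda\setminus\partial\Pi$ in $V_{0,\partial\Pi}$ with junctions on $\Pi$, not merely by the single chords avoiding $\Pi$. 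With these corrections your sketch agrees with the argument of \cite{AE21}.
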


\subsection{The singular algebra in $\R^{3}$}	
In \cite[Section 7.1]{AE21}, Asplund--Ekholm show that in $\R^{3}$, their
invariant is isomorphic to the combinatorial version by An--Bae in \cite{AB20}.
We will use this below, and give a brief summary of the method. 

Let $V \subset \R^{3}$ be a Weinstein hypersurface and let $\pi: \R^{3} \to
\R^{2}$, $( x,y,z ) \to ( x,y )$ be the Lagrangian projection. We assume that
$V$ is in generic position so that $\pi|_{V_{0}}$ is an embedding and let
$\R^{2,\circ} := (\R^{2} \setminus \pi( V_{0} ) )\sqcup_{\partial V_{0}}
(\partial V_{0} \times (-\infty, 0 ] )$, i.e. the surface obtained by cutting
out the image of the subcritical part of the hypersurface and attaching the
negative end of the symplectization of $\partial V_{0}$ instead.  The image
$\widetilde{\Lambda} := \pi( \Lambda \setminus (\Lambda \cap V_{0}))$ of the
cores of the top handles of $V$ forms an exact Lagrangian, with negative end
$\partial \Lambda$ in $\partial_{-} \R^{2, \circ} \cong \partial V_{0}$. 

We can then consider the version of the Legendrian dg-algebra $CE^{*}(
\widetilde{\Lambda};\R \times \R^{2,\circ})$ as described by An--Bae
\cite{AB20}. It is generated by double points of $\widetilde{\Lambda}$ and Reeb
chords of $\partial \Lambda$, and its differential is given by counting disks
in $\R^{2,\circ}$ with boundary on $\widetilde{\Lambda}$, whose punctures
converge to either a double point or to a chord in the boundary, similarly to
in the classical Chekanov--Eliashberg dg-algebra of smooth knots in $\R^{3}$.

The grading is easiest to compute in the front projection. Let $S$ be the
subset of $\Lambda$ consisting of all singularities along with the points
which are cusps in the front (i.e. the points with $y$-coordinate zero). We
choose a \emph{Maslov potential} $m:\Lambda \setminus S \to \Z$, such that $m$
is locally constant on $\Lambda \setminus S$, and increases with one when we
pass an up cusp and decreases with one when we pass a down cusp. The degree of
Reeb chord $a$ of $\Lambda$ with starting point $a^{-}$ and endpoint $a^{+}$
then has grading $ | a | = -m( a^{+} ) + m( a^{-} )$. The chords of
$c_{ij}^{p}$ of $\Lambda$ are graded by $| c_{ij}^{p} | = q + 1 + (- m( j ) +
m( i ))$, where $q$ is the number of times $c_{ij}^{p}$ passes through the
$z$-axis in the front (or equivalently, the $y$-axis in the Lagrangian
projection).

\begin{proposition}
\label{res:ce-in-contactization}
	There is an isomorphism of dg-algebras,
	\[
		CE^{*}(\widetilde{\Lambda};\R \times \R^{2,\circ}) 
		\cong CE^{*}( \Lambda;V_{0};\R^{4}  ) 
	\] 
\end{proposition}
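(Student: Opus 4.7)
The plan is to prove the isomorphism by matching generators and differentials on both sides, leveraging the fact that $CE^{*}(\Lambda;V_0;\R^4)$ is by definition the smooth Chekanov--Eliashberg algebra $CE^{*}(\Sigma;W_V^0)$ of a genuine smooth Legendrian link. The proof naturally splits into three steps. First I would set up a Lagrangian projection for the Legendrian $\Sigma \subset \partial W_V^0$ which is compatible with the projection $\pi:\R^{3}\to\R^{2}$ used to define $\widetilde{\Lambda}$. Concretely, the contact manifold $\partial W_V^0$ is glued from the original $\R^{3}$, the symplectization-like piece $\R\times V$, and the handle $V_0\times D^{*}_\epsilon[-1,1]$; on the $\R^{3}$-part we use $\pi$, and on the handle/symplectization part we use the natural fibrewise projection that collapses the $D^{*}_\epsilon$-fibre and the $\R$-factor. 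The image of $\Sigma$ under this global projection is exactly $\widetilde{\Lambda}\subset \R^{2,\circ}$, after identifying $\partial V_0\times(-\infty,0]$ with the cylindrical end of the handle.

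Next I would match generators. The Reeb chords of $\Sigma$ in $\partial W_V^0$ split according to where they live: chords lying over the $\R^{3}$-piece are exactly the Reeb chords of $\Lambda$ above $\Lambda\setminus V_0$, and these correspond under $\pi$ to double points of $\widetilde{\Lambda}$; chords lying in the handle/symplectization region are, after shrinking the handle, in canonical bijection with Reeb chords of $\partial \Lambda$ in $\partial V_0$ (two points of $\partial \Lambda$ joined by a Reeb arc of $\partial V_0$ give a chord of $\Sigma$ that crosses the handle). The grading formula $|c_{ij}^{p}|=q+1+(m(i)-m(j))$ for boundary chords, and the standard Maslov-potential rule $|a|=m(a^-)-m(a^+)$ for double points, both arise from the Conley--Zehnder computation for the corresponding Reeb chords of $\Sigma$, which I would verify by a direct trivialization of the contact structure across the handle.

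For the differentials, I would apply the classical Chekanov Lagrangian-projection argument to $\Sigma$: after a small perturbation to a generic almost complex structure compatible with the projection, the rigid pseudo-holomorphic disks in the symplectization of $\partial W_V^0$ with boundary on $\R\times\Sigma$ correspond bijectively to immersed polygons in $\R^{2,\circ}$ with boundary on $\widetilde{\Lambda}$ whose corners are either double points or boundary Reeb chords of $\partial\Lambda$. The standard proof uses that $\pi$ is a local biholomorphism so a holomorphic disk descends to a holomorphic polygon; conversely, a polygon lifts uniquely using the primitive of the Liouville form. This argument goes through on the $\R^{3}$-piece verbatim, and on the handle/cylindrical end by the same open-book/Morse--Bott analysis that Asplund--Ekholm carry out in \cite[Section 7.1]{AE21}; crucially, since the handle is a small neighborhood of a cotangent fibre, no nontrivial anchors enter.

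The main obstacle is ensuring that no holomorphic disks get lost or gained as one passes between the $\R^{3}$-piece and the handle region. Concretely, one has to rule out disks that enter the handle and produce bubbles or winding behaviour not visible in the $\R^{2,\circ}$-picture. I would handle this by an SFT-stretching argument along the contact hypersurface $\{\pm 1\}\times\partial V_0$ separating the two pieces: by Stokes and energy estimates, a rigid disk cannot have any interior point mapped into the handle region in a way inconsistent with the projection model, and the broken configurations in the stretched limit match exactly the polygons in $\R^{2,\circ}$ whose corners at the negative end are labelled by chords $c_{ij}^{p}$. Once this bijection of disks is established with correct signs, the two differentials agree term-by-term, yielding the desired isomorphism of dg-algebras.
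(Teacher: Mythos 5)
The paper disposes of this proposition in one line: it is quoted as a special case of \cite[Lemma 7.1]{AE21}, so no independent argument is given there. Your sketch is essentially a reconstruction of the proof of that cited lemma --- matching the projection, the generators (double points of $\widetilde{\Lambda}$ versus chords of $\Lambda$ over $\R^{3}$, and Morse--Bott families over the handle versus chords of $\partial\Lambda$ in $\partial V_{0}$), the gradings, and the disk counts via the Chekanov-style correspondence with a stretching argument near $\{\pm 1\}\times\partial V_{0}$ --- and the picture you describe is the correct one. Note, however, that at the decisive analytic step (the Morse--Bott/handle analysis and the exclusion of curves entering the negative end $\R\times V$) you yourself appeal to \cite[Section 7.1]{AE21}, which is precisely where the result you are proving is established; so as a standalone proof your argument is circular at that point, while as an account of why the identification holds it agrees in substance with what the paper delegates to the citation. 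If you wanted a genuinely self-contained proof you would need to carry out that Morse--Bott perturbation and the no-escape argument for the negative end (the paper's \cite[Lemma 3.5]{AE21}) yourself rather than quoting them.
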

\begin{proof}
	This is a special case of \cite[Lemma 7.1]{AE21}.	
\end{proof}
We give an example of how to compute the algebra. It is adapted from
\cite[Section 4.6.2]{AB20}.

\begin{example}
	Let $\Lambda$ be the singular Legendrian illustrated in
	\autoref{fig:pinched-figure-eight}. 
\begin{figure}[!htb]
    \centering
    
    \import{./figures/}{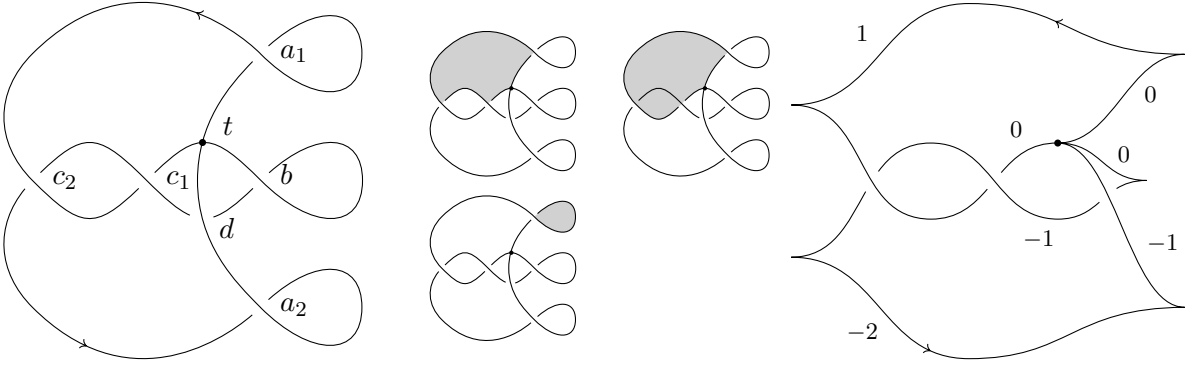}

    \caption{To the left is the Lagrangian projection of a singular Legendrian
    $\Lambda$. In the center are the disks contributing to $\partial a_1$. To
    the right is the front projection, with a choice of Maslov potential.}
    \label{fig:pinched-figure-eight}
\end{figure}
	It has six Reeb chords
	$a_1,a_2,b,c_1,c_2$, and $d$, and an infinite number of
	chords $t_{ij}^{p}$ of $\partial \Lambda$. We label the points of
	$\partial \Lambda$ as $1,2,3,4,$ going counter-clockwise from the top. 
	Giving the strands of the front
	projections Maslov potentials as shown in
	\autoref{fig:pinched-figure-eight}, we get the degrees
	\begin{equation*}
	\begin{aligned}[c]
		| a_1 | &= -1,\\
		| a_2 | &= -1,\\
		| b | &= -1,
	\end{aligned}
	\quad\quad\quad\quad
	\begin{aligned}[c]
		| c_1 | &= -1,\\
		| c_2 | &= 1,\\
		| d | &= 0,
	\end{aligned}
	\quad\quad\quad\quad
	\begin{aligned}[c]
		| t_{ij}^{p} | &= -2p+1.
	\end{aligned}
	\end{equation*}
	Counting the disks in the Lagrangian projection we get the differential,
	\begin{equation*}
	\begin{aligned}[c]
		\partial a_1  &= c_1c_2t_{12}^{0} + t_{12}^{0} + e_1,\\
		\partial a_2  &= dc_2c_1 + t_{23}^{0}c_1 + d + e_2,\\
		\partial b  &= dt_{34}^{0} + e_2,
	\end{aligned}
	\quad\quad\quad\quad
	\begin{aligned}[c]
		\partial  c_1  &= 0,  \\
		\partial  c_2  &= 0, \\
		\partial  d  &= 0,
	\end{aligned}
	\end{equation*}
	and as before
	\[
		\partial t_{ij}^{p} = \sum_{k,l} t_{kj}^{l}t_{ik}^{p-l}.
	\] 
	The disks contributing to $\partial a_{1}$ are illustrated in 
	\autoref{fig:pinched-figure-eight}. 
\end{example}
\subsection{Weinstein isotopy and invariance}
\label{ssec:isotopy}
There are two notions of isotopy of $V$. The first is \emph{Weinstein isotopy},
which is a smooth isotopy (not necessarily preserving the contact distribution) of 
$V$ through Weinstein hypersurfaces, see
\cite{Eli18}. For a generic Weinstein isotopy, the skeleton undergoes an
isotopy, together with handle-slides and introduction and contraction of
handles. See \autoref{fig:weinstein-reidemeister}. 
The second notion is that of \emph{Legendrian isotopy} of a Weinstein skeleton,
which is an ambient contact isotopy of this singular Legendrian.
In particular, a Legendrian isotopy of a Weinstein skeleton induces a Weinstein
isotopy of the corresponding thickening, preserving the handle
decomposition. Recall that any Weinstein hypersurface
has a naturally defined
skeleton that is independent of the contact form; see \cite[Section 2]{Eli18}.
The important difference between these notions
is that the quasi-isomorphism type of $CE^{*}( \Lambda;V_{0};W )$ is invariant
under Legendrian isotopy, but not Weinstein isotopy. 

\begin{theorem}[Asplund--Ekholm \cite{AE21}]
\label{res:invariance}
	The Chekanov--Eliashberg dg-algebra $CE^{*}( \Lambda;V_{0};W )$ is up to
	quasi-isomorphism invariant under Legendrian isotopy of $\Lambda$.
\end{theorem}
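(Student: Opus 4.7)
The plan is to reduce the claim to invariance of the smooth Chekanov--Eliashberg algebra via the defining identity $CE^{*}(\Lambda;V_{0};W)=CE^{*}(\Sigma;W_{V}^{0})$ from \autoref{ssec:ceforsing}. So the game is: take a Legendrian isotopy $\Lambda_{t}$ of the singular skeleton, promote it to a one-parameter family of the auxiliary data $(W_{V_{t}}^{0},\Sigma_{t})$, and then apply smooth invariance to the family of smooth Legendrians $\Sigma_{t}$.

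First, I would use the observation in \autoref{ssec:isotopy} that a Legendrian isotopy of a Weinstein skeleton induces a Weinstein isotopy of its thickening preserving the handle decomposition. Applying this to $\Lambda_{t}$ yields a smooth family $V_{t}\subset \partial W$ of Weinstein hypersurfaces with a fixed handle decomposition, moving the subcritical piece $V_{0,t}$ and the attaching spheres $\partial \Lambda_{t}\subset \partial V_{0,t}$ smoothly. Since the Asplund--Ekholm construction of $W_{V}^{0}$ (attaching the handle $V_{0}\times D^{*}_{\epsilon}[-1,1]$ to $W\sqcup (\R\times (\R\times V))$) and of the smooth Legendrian link $\Sigma$ depends only on $V_{0}$, $\partial \Lambda$, and the top stratum $\Lambda\setminus V_{0}$, it extends to a one-parameter family $(W_{V_{t}}^{0},\Sigma_{t})$ in which $W_{V_{t}}^{0}$ is a Weinstein deformation and $\Sigma_{t}\subset \partial W_{V_{t}}^{0}$ is a genuine Legendrian isotopy of smooth Legendrian $n$-spheres.

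Next, I would invoke the standard fact that the smooth Chekanov--Eliashberg algebra of a Legendrian in the boundary of a Weinstein manifold is invariant under Legendrian isotopy together with Weinstein deformation of the ambient. This goes through the trace Lagrangian cobordism of the isotopy $\Sigma_{t}$, sitting inside the symplectization interpolating between $\partial W_{V_{0}}^{0}$ and $\partial W_{V_{1}}^{0}$; the SFT cobordism map counting rigid pseudo-holomorphic disks with boundary on this trace cobordism defines a dg-algebra morphism
\[
CE^{*}(\Sigma_{1};W_{V_{1}}^{0})\longrightarrow CE^{*}(\Sigma_{0};W_{V_{0}}^{0}),
\]
which is a quasi-isomorphism by a parametrized version of the argument behind \autoref{res:commutative-square}, using that the Weinstein deformation and the Legendrian isotopy produce no chord births/deaths below any fixed action level that cannot be cancelled by an action filtration spectral sequence argument.

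The main obstacle, and the step requiring real work, is making the correspondence between the singular and smooth deformations honest: one has to check that the chosen Weinstein isotopy of the thickening $V_{t}$ can be realized so that the attaching handle $V_{0,t}\times D^{*}_{\epsilon}[-1,1]$ varies smoothly without creating new Reeb orbits interfering with the moduli spaces that define the differential, and so that $\Sigma_{t}$ stays a genuine Legendrian submanifold throughout. This is essentially the content of \cite[Section 3]{AE21}; once it is in place, the argument above concludes by combining the smooth invariance with the identity $CE^{*}(\Lambda_{t};V_{0,t};W)=CE^{*}(\Sigma_{t};W_{V_{t}}^{0})$ at each endpoint.
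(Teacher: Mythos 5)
Your proposal follows essentially the same route as the paper: reduce to the defining identity $CE^{*}(\Lambda;V_{0};W)=CE^{*}(\Sigma;W_{V}^{0})$, observe that a Legendrian isotopy of the skeleton carries the Asplund--Ekholm data along so that $\Sigma$ undergoes a Legendrian isotopy, and conclude by the standard isotopy invariance of the smooth Chekanov--Eliashberg algebra. The paper's proof is exactly this (citing \cite[Section 2.1]{AE21}), so apart from your extra bookkeeping about the Weinstein deformation of $W_{V}^{0}$, the arguments coincide.
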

\begin{proof}
	See \cite[Section 2.1]{AE21}. Legendrian isotopies of $V$ induce
	Legendrian isotopies of the link $\Sigma$ in the definition of $CE^{*}(
	\Lambda;V_{0};W )$, so the result follows from the isotopy invariance
	of the Chekanov--Eliashberg dg-algebra for smooth Legendrians.  
\end{proof}

\begin{remark}
For $\partial W=\R^{3}$, An--Bae have introduced the stronger notion of
\emph{generalized stable tame isomorphism} and shown that two singular
Legendrian knots in $\R^{3}$ are Legendrian isotopic if and only if their
Chekanov--Eliashberg dg-algebras are generalized stable tame isomorphic
\cite[Theorem 5.1]{AB20}.
\end{remark}

Below we will explore several concrete examples of how invariance fails for
Weinstein isotopies. Also see \cite[Examples 7.4 and 7.5]{AE21}. A
Weinstein isotopy will typically alter the handle decomposition of $W_{V}$.
Since the co-cores $C$ generate the wrapped Fukaya category
$\mathcal{W}(W_{V})$ \cite{CRGG17, GPS18} one expects the category
$\text{Perf}(CE^{*}( \Lambda;V_0;W ))$ of perfect complexes of dg-modules, i.e.
semi-free dg-modules over $CE^{*}( \Lambda;V_{0};W )$, to be invariant up to
quasi-equivalence under Weinstein isotopy. See also \cite[Remark 1.5]{AE21}.

When $\partial W=\R^{3}$, one can see that the algebra changes in a predicable way when
performing a handle contraction.

\begin{corollary}
\label{res:weinstein-isotopy-cor}
	Let $V \subset \R^{3}$ be a Weinstein hypersurface with skeleton
	$\Lambda$ and let $V' \subset \R^{3}$ be a Weinstein isotopic
	hypersurface with skeleton $\Lambda'$, obtained by performing a
	Legendrian isotopy and contracting a single handle $\Pi$ of $V$ as
	in \autoref{fig:weinstein-reidemeister}. Then there is a dg-algebra
	morphism
	\[
		CE^{*}( \Lambda';V_{0}';\R^{4} ) \to CE^{*}(
		\Lambda;V_{0};\R^{4} )
	\]
	which is a quasi-isomorphism onto $CE^{*}( \Lambda;V_{0};\R^{4}
	)[\Lambda\setminus\Pi,\Lambda\setminus\Pi]$.
\end{corollary}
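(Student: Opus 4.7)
The plan is to reduce the statement directly to the relative cobordism map of \autoref{res:sing-cobordism-map} with $W = \R^4$. Weinstein isotopy is by assumption the composite of a Legendrian isotopy and the contraction of the single handle $\Pi$, so I would split the proof into those two pieces.

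First, using \autoref{res:invariance}, the Legendrian isotopy portion of the Weinstein isotopy changes $CE^{*}(\Lambda;V_{0};\R^{4})$ only up to quasi-isomorphism (and preserves its handle decomposition, hence the subalgebra indexed by $\Lambda \setminus \Pi$). So after replacing $\Lambda$ with its image we may assume that $V'$ differs from $V$ only by the contraction of the handle $\Pi$. The key geometric observation is now that, under this handle contraction, the skeleton of $V'$ is exactly $\Lambda' = \Lambda \setminus \Pi$, and its subcritical part $V_{0}'$ is obtained from $V_{0}$ by attaching a Weinstein $1$-handle along $\partial \Pi \subset \partial V_{0}$; that is, $V_{0}' = V_{0,\partial \Pi}$ in the notation of \autoref{ssec:ceforsing}. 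This matching is essentially the local model of the Weinstein--Reidemeister move in \autoref{fig:weinstein-reidemeister} and is the main content of the corollary; unpacking it is the step I expect to require the most care.

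With that identification in place, the definition of the relative singular algebra gives an equality
\[
    CE^{*}(\Lambda';V_{0}';\R^{4})
    = CE^{*}(\Lambda \setminus \Pi; V_{0,\partial \Pi};\R^{4}),
\]
and the conclusion then follows immediately from \autoref{res:sing-cobordism-map} applied with $W = \R^{4}$: the cobordism map
\[
    CE^{*}(\Lambda\setminus\Pi;V_{0,\partial\Pi};\R^{4}) \longrightarrow CE^{*}(\Lambda;V_{0};\R^{4})
\]
is a dg-algebra morphism and a quasi-isomorphism onto the subalgebra $CE^{*}(\Lambda;V_{0};\R^{4})[\Lambda\setminus \Pi,\Lambda\setminus \Pi]$.

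The main obstacle, as noted, is the identification of $V_{0}'$ with $V_{0,\partial \Pi}$: one must verify that the Weinstein handle absorbed by the skeleton during the contraction of $\Pi$ is precisely the handle attached at $\Sigma(\Pi)$ inside $W_{V}^{0}$ appearing in the definition of the relative algebra. Once this is settled, no new pseudo-holomorphic analysis is required, since all disk counts are packaged into \autoref{res:sing-cobordism-map} and \autoref{res:invariance}.
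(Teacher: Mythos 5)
Your overall route is the same as the paper's: handle the Legendrian isotopy with \autoref{res:invariance}, identify $CE^{*}(\Lambda';V_{0}';\R^{4})$ with the relative algebra $CE^{*}(\Lambda\setminus\Pi;V_{0,\partial\Pi};\R^{4})$, and then invoke \autoref{res:sing-cobordism-map}. The problem is the middle step, which you assert follows from the definition of the relative algebra once one observes geometrically that $V_{0}'=V_{0,\partial\Pi}$. It does not: the relative algebra is by definition $CE^{*}(\Sigma\setminus\Sigma_{\Pi};W^{0}_{V,\Sigma_{\Pi}})$, i.e.\ it is built from the Asplund--Ekholm cobordism $W^{0}_{V}$ with a \emph{four-dimensional} Weinstein handle attached along the Legendrian sphere $\Sigma(\Pi)$, not from the new hypersurface $V'$ with its own cobordism $W^{0}_{V'}$. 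Matching these two constructions is precisely what the surgery presentation of Asplund--Ekholm (\autoref{res:surgery-presentation}) together with the combinatorial description in $\R^{3}$ (\autoref{res:ce-in-contactization}) accomplishes, and this is how the paper closes the step you leave open. Moreover the case at hand is $n=2$, which is exactly the case singled out in \autoref{res:surgery-presentation} as needing separate treatment: there the naive generator-level identification fails to be an isomorphism and one only gets the modified presentation (generated by chords of $\Lambda$ and of $\partial\Lambda\setminus\partial\Pi$ in $V_{0,\partial\Pi}$, up to quasi-isomorphism with the other description). So the "equality by definition" you write down is the crux of the corollary, you explicitly flag it as the main obstacle, and the proposal does not supply the argument; as written this is a genuine gap rather than an omitted routine verification.
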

\begin{proof}
	By \autoref{res:invariance}, Legendrian isotopy preserves the
	quasi-isomorphism type, so we need only consider the handle
	contraction. It follows from \autoref{res:ce-in-contactization} and
	\autoref{res:surgery-presentation} that $CE^{*}(
	\Lambda';V_{0}';\R^{4} )$ is isomorphic to the relative algebra
	$CE^{*}( \Lambda \setminus \Pi;V_{0,\partial \Pi};\R^{4} )$, and by
	\autoref{res:sing-cobordism-map} there is a dg-algebra morphism
	$CE^{*}( \Lambda \setminus \Pi;V_{0,\partial \Pi};\R^{4} ) \to CE^{*}(
	\Lambda;V_{0};\R^{4} )$ which is a quasi-isomorphism onto $CE^{*}(
	\Lambda;V_{0};\R^{4})[\Lambda\setminus\Pi,\Lambda\setminus\Pi]$. The result follows.
\end{proof}

\subsection{Reidemeister moves and the Ng resolution} 
In $\R^{3}$, there are versions of the Reidemeister moves for singular
Legendrians, extending the moves for smooth Legendrians introduced in
\cite{Swi92}.

\begin{proposition}[\cite{BI09}]
	Two singular Legendrians in $\R^{3}$ are Legendrian isotopic (after
	deforming the angles of the strands at the singular points) if and
	only if their front diagrams are related by a sequence of planar
	isotopies and singular Legendrian Reidemeister moves as shown in
	\autoref{fig:sing-legendrian-reidemeister}.
\end{proposition}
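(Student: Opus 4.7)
The plan is to follow the classical Świątkowski-style proof for smooth Legendrian Reidemeister moves, adapting it to accommodate the singular points. The forward direction (every move is realized by a Legendrian isotopy) amounts to writing down explicit one-parameter families of fronts and lifting them to contact isotopies; the only extra care needed is to check that when a move takes place near a singularity, one can deform the strand angles continuously so that the singular condition (the cone point structure) is preserved throughout. This is a local, essentially combinatorial check for each of the moves in \autoref{fig:sing-legendrian-reidemeister}.

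For the converse direction, I would start with a generic Legendrian isotopy $\Lambda_t$ between two singular Legendrians and project it to a one-parameter family of fronts in the $(x,z)$-plane. By a transversality/stratification argument (exactly as in the smooth case), outside a finite set of times $t_1 < \ldots < t_N$ the family is a planar isotopy of the front preserving its combinatorial type, and at each $t_i$ a single codimension-one degeneration occurs. The smooth Świątkowski moves (triple point, cusp-crossing, cusp birth, tangencies) account for all degenerations away from the singular points. The additional work is to enumerate the codimension-one events involving the singular locus: a singularity passing through a crossing, a singularity passing through a cusp, a singularity passing through another singularity, and the ``angle deformation'' events where the strand directions at a cone point vary but no combinatorial change occurs. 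I would then match each such event with a move in the figure.

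The main obstacle is the completeness check in this case analysis, and in particular the event ``two singularities meet'' together with the angle-deformation events, which are the genuinely new phenomena compared to the smooth case. Here one uses the hypothesis that isotopy is allowed to deform angles: this has the effect of collapsing a whole connected family of local models at a singularity into a single Reidemeister move, so one must verify that within each such connected family of local pictures the fronts are related by planar isotopy together with the listed moves. Equivalently, one shows that the local moduli space of cone-point front germs (modulo planar isotopy) is connected, with codimension-one walls corresponding exactly to the moves displayed.

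Once the completeness of the list is established, the conclusion follows by concatenating the local moves at each $t_i$ with the planar isotopies on the intervals between them, exactly as in Świątkowski's original argument. Since this is precisely what is carried out in \cite{BI09}, I would reference their case analysis for the details rather than reproduce it.
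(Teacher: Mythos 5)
The paper does not prove this proposition itself: its proof is simply a citation to \cite[Section 4]{BI09}, and your proposal likewise defers the crucial completeness-of-moves case analysis to that reference, so the two are in essentially the same position. Your outline of the Świątkowski-style argument (local realizability of the moves, plus a codimension-one stratification of generic front isotopies with the extra singular-point events) is a reasonable summary of what the cited proof does, so this is fine.
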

\begin{proof}
	See \cite[Section 4]{BI09}.
\end{proof}
\begin{figure}[!htb]
    \centering
    
\begingroup%
  \makeatletter%
  \providecommand\color[2][]{%
    \errmessage{(Inkscape) Color is used for the text in Inkscape, but the package 'color.sty' is not loaded}%
    \renewcommand\color[2][]{}%
  }%
  \providecommand\transparent[1]{%
    \errmessage{(Inkscape) Transparency is used (non-zero) for the text in Inkscape, but the package 'transparent.sty' is not loaded}%
    \renewcommand\transparent[1]{}%
  }%
  \providecommand\rotatebox[2]{#2}%
  \newcommand*\fsize{\dimexpr\f@size pt\relax}%
  \newcommand*\lineheight[1]{\fontsize{\fsize}{#1\fsize}\selectfont}%
  \ifx\svgwidth\undefined%
    \setlength{\unitlength}{355.46604187bp}%
    \ifx\svgscale\undefined%
      \relax%
    \else%
      \setlength{\unitlength}{\unitlength * \real{\svgscale}}%
    \fi%
  \else%
    \setlength{\unitlength}{\svgwidth}%
  \fi%
  \global\let\svgwidth\undefined%
  \global\let\svgscale\undefined%
  \makeatother%
  \begin{picture}(1,0.64330392)%
    \lineheight{1}%
    \setlength\tabcolsep{0pt}%
    \put(0,0){\includegraphics[width=\unitlength,page=1]{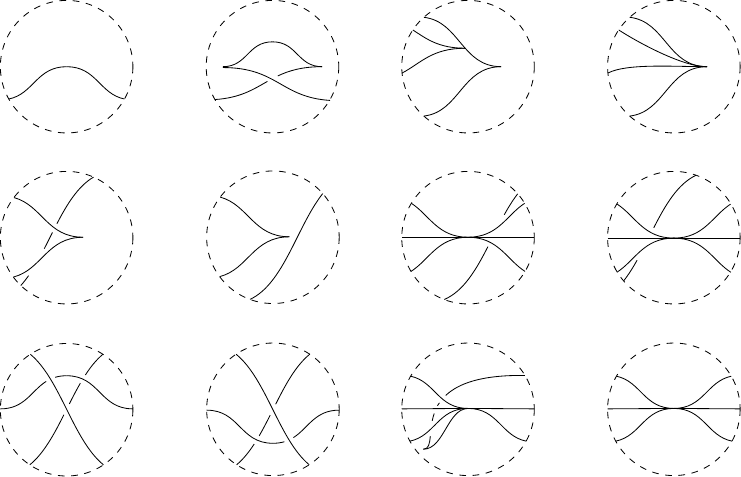}}%
    \put(0.22090751,0.56842621){\color[rgb]{0,0,0}\makebox(0,0)[lt]{\lineheight{1.25}\smash{\begin{tabular}[t]{l}I\end{tabular}}}}%
    \put(0.76037983,0.33681141){\color[rgb]{0,0,0}\makebox(0,0)[lt]{\lineheight{1.25}\smash{\begin{tabular}[t]{l}V\end{tabular}}}}%
    \put(0.75338624,0.10534578){\color[rgb]{0,0,0}\makebox(0,0)[lt]{\lineheight{1.25}\smash{\begin{tabular}[t]{l}VI\end{tabular}}}}%
    \put(0.20824806,0.10575925){\color[rgb]{0,0,0}\makebox(0,0)[lt]{\lineheight{1.25}\smash{\begin{tabular}[t]{l}III\end{tabular}}}}%
    \put(0.21519589,0.33669593){\color[rgb]{0,0,0}\makebox(0,0)[lt]{\lineheight{1.25}\smash{\begin{tabular}[t]{l}II\end{tabular}}}}%
    \put(0.75606115,0.56791284){\color[rgb]{0,0,0}\makebox(0,0)[lt]{\lineheight{1.25}\smash{\begin{tabular}[t]{l}IV\end{tabular}}}}%
    \put(0,0){\includegraphics[width=\unitlength,page=2]{sing-legendrian-reidemeister.pdf}}%
  \end{picture}%
\endgroup%

    \caption{The singular Legendrian Reidemeister moves, introduced in
	    \cite{BI09}. Reflections of these moves, with the crossings adjusted
	    accordingly, are also allowed. In the
	    moves involving singularities, any number of strands are allowed.}
    \label{fig:sing-legendrian-reidemeister}
\end{figure}
\begin{remark}
	The set of moves in \autoref{fig:sing-legendrian-reidemeister} is not
	minimal; IV is a special case of VI.
\end{remark}
There is an analogous result for Weinstein isotopies.
\begin{proposition}
\label{res:weinstein-lemma}
	Two singular Legendrians in $\R^{3}$ have Weinstein isotopic
	thickenings if and only
	if their front diagrams are related by a sequence of planar isotopies,
	singular Legendrian Reidemeister moves as shown in
	\autoref{fig:sing-legendrian-reidemeister}, and Weinstein handle
	introductions and contractions as shown in
	\autoref{fig:weinstein-reidemeister}.
\end{proposition}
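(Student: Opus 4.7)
The plan is to prove both implications of the if-and-only-if, using the previous proposition to handle the Legendrian isotopy pieces and a generic decomposition of Weinstein isotopies for the rest.

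\textbf{Forward direction.} Each elementary move on the list induces a Weinstein isotopy of the thickening. Planar isotopies and singular Reidemeister moves from \autoref{fig:sing-legendrian-reidemeister} arise from ambient contact isotopies of the singular Legendrian skeleton, which by the discussion in \autoref{ssec:isotopy} induce Weinstein isotopies of the thickening preserving the handle decomposition. The handle introduction and contraction moves from \autoref{fig:weinstein-reidemeister} are Weinstein isotopies by construction. A composition of such moves therefore yields a Weinstein isotopy between the two thickenings.

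\textbf{Reverse direction.} Given a Weinstein isotopy $\{V_t\}_{t\in[0,1]}$ from $V$ to $V'$, perturb it so as to be generic in the sense of the Cerf-type theory for Weinstein structures developed in \cite{Eli18}. Then the time-varying skeleton $\Lambda_t$ is smooth away from a finite set of critical times, and at each such time one of three elementary events occurs: (a) a Legendrian Reidemeister-type crossing through a non-generic front configuration, (b) a handle-slide of one top handle over another, or (c) a handle-introduction or handle-contraction. Between critical times the front undergoes only planar isotopies. Events of type (a) are covered by the previous proposition, and events of type (c) appear explicitly in \autoref{fig:weinstein-reidemeister}.

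\textbf{The main obstacle.} The delicate step is showing that a handle-slide event of type (b) can be written as a finite composition of the listed moves, despite not appearing among them. The strategy is to first introduce an auxiliary canceling pair of handles adjacent to the handle being slid, using a type (c) move. The attaching point of the original handle can then be dragged across the enlarged configuration by a Legendrian isotopy, which by the previous proposition is realized by a sequence of singular Reidemeister moves from \autoref{fig:sing-legendrian-reidemeister}, in particular using the moves that allow a strand to cross a singular vertex. Once the slide has been performed, the canceling pair is contracted away by another type (c) move. The net effect of this sequence is precisely the original handle-slide. Assembling these local decompositions over all critical times of the generic Weinstein isotopy yields the required presentation of the entire isotopy as a sequence of planar isotopies, singular Reidemeister moves, and handle introductions and contractions.
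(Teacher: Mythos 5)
Your overall strategy coincides with the paper's: realize each listed move by a Weinstein isotopy of the thickening, and conversely decompose a generic Weinstein isotopy into Morse times, where the skeleton moves by an ambient contact isotopy and hence the front changes by planar isotopies and the moves of \autoref{fig:sing-legendrian-reidemeister}, together with finitely many singular moments (births, deaths, handle-slides) absorbed by the moves of \autoref{fig:weinstein-reidemeister} plus Legendrian isotopy. Your treatment of the handle-slide is in fact more explicit than the paper's, which only asserts that handle-slides and births/deaths can be realized by the handle moves together with a Legendrian isotopy; your canceling-pair mechanism is consistent with how such isotopies are actually carried out later in the paper (compare the proof of \autoref{res:mutation}, where a handle is introduced to split a singularity, the configuration is rearranged by Reidemeister moves, and a handle is then contracted).

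The one genuinely thin spot is the forward direction for the handle introduction and contraction moves, which you dispose of ``by construction''. These moves are a priori only operations on front diagrams, and the content of this implication is precisely that performing such a move on the skeleton is induced by a smooth isotopy of the thickening through Weinstein hypersurfaces. The paper's argument is short but substantive: a sufficiently small thickening is Weinstein for the restriction of the standard contact form, the Liouville forms induced before and after the move can be joined by an exact family $\lambda^0+dh_t$ through Weinstein structures, and adding $h_t$ to the $z$-coordinate of the hypersurface realizes this family as an ambient isotopy through Weinstein hypersurfaces. Some argument of this kind is needed in your write-up. A smaller omission of the same nature occurs in the reverse direction: during the Morse times you implicitly take the motion of the skeleton to be an ambient contact isotopy, whereas the paper first performs an explicit deformation near the $0$-skeleton to arrange this before invoking the standard fact that Legendrian isotopies extend to ambient contact isotopies.
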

\begin{remark}
	The set of moves in \autoref{fig:weinstein-reidemeister} is not minimal
	either; any one of the three types of moves can can be
	obtained from any other by composing with Legendrian isotopies. 
\end{remark}
\begin{figure}[!htb]
    \centering
    
    \import{./figures/}{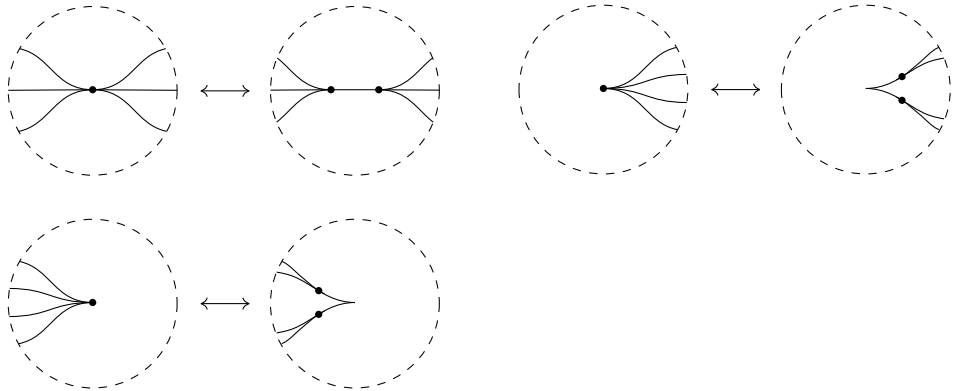}

    \caption{Introduction and contraction of Weinstein handles. The
	    two groups of strands that become separated after inserting the new
    strand (i.e. located to the left and right in the top left picture) do not
    necessarily consist of the same number of components, they can even be
    empty.}
    \label{fig:weinstein-reidemeister}
\end{figure}

\begin{proof}
	We first prove that the moves applied to the skeleton can be realized
	by a Weinstein isotopy of the thickening. Consider a small deformation
	of a singular Legendrian $\Lambda^0$ with
	thickening $V^0$ by one of the moves in \autoref{fig:weinstein-reidemeister} 
	to produce a singular
	Legendrian $\Lambda^1$. 

	Recall that the Lagrangian projection of the Weinstein hypersurface to $\R^{2}$
	is an immersed Weinstein surface. If $\Lambda^{0}$ is the skeleton of a
	Weinstein thickening, one can readily find a Weinstein homotopy inside that
	surface that deforms the skeleton to the Lagrangian projection of
	$\Lambda^{1}$. The Weinstein homotopy deforms the Liouville form by the
	addition of an exact term $dh_{t}$ where $h_{0} = 0$. The Weinstein isotopy
	can then be obtained from the initial Weinstein embedding by addition of the
	function $h_{t}$ to the $z$-coordinate. 


	We now conversely show that a generic Weinstein isotopy can be assumed to
	induce a deformation of the skeleton that can be realized by the moves.
	For a Weinstein isotopy $(V^t,\lambda^t,\phi^t)$ for which $\phi^t$ remains
	Morse, one can readily deform the skeleton so that it is induced by an
	ambient contact isotopy. First, one perform an explicit deformation near
	the 0-skeleton to ensure that there is an ambient contact isotopy that
	induces that deformation. Then, one deforms the remaining part of the
	skeleton by using the standard result that a Legendrian isotopy is
	induced by an ambient contact isotopy (see \cite{Gei08}). Such
	deformations are induced by the Reidemeister moves.

	A generic Weinstein isotopy consists of generic times when $\phi^t$
	remains Morse, together with a finite number of singular moments that
	are handle-slides, or births or deaths. Note that both these singular
	moments can be realized by the moves from Figure 12 together with a
	Legendrian isotopy if one assume the handle-slides and birth-deaths to
	be generic. More precisely, the birth-death is given by any of the
	moves in \autoref{fig:weinstein-reidemeister} where the new singularity
	is one-valent. Handle-slides can be performed by a contraction of an
	arc as in \autoref{fig:weinstein-reidemeister}, followed by an
	introduction of a new arc. The statement follows from this.
\end{proof}
Since one typically computes the Chekanov--Eliashberg dg-algebra in the Lagrangian
projection but constructs isotopies in the front projection, it is useful to be able to
go back and forth between them. This is done using the so called
Ng resolution, which was introduced by Ng for smooth Legendrians in
\cite{Ng03} and generalized to singular Legendrians by An--Bae--Su in
\cite{ABS22}.
\begin{definition}
	Let $\Lambda \subset \R^{3}$ be a singular Legendrian. The \emph{Ng
	resolution} of $\Lambda$ is the singular Legendrian whose Lagrangian
	projection (up to an isotopy of $\R^{2}$ which corrects
	the areas bounded by the different regions in the projections and the
	angles of the strands at the singular points)
	is obtained  by performing the operations illustrated in
	\autoref{fig:legendrian-to-lagrangian} to the front diagram of
	$\Lambda$.
	\begin{figure}[!htb]
	    \centering
	    
    \import{./figures/}{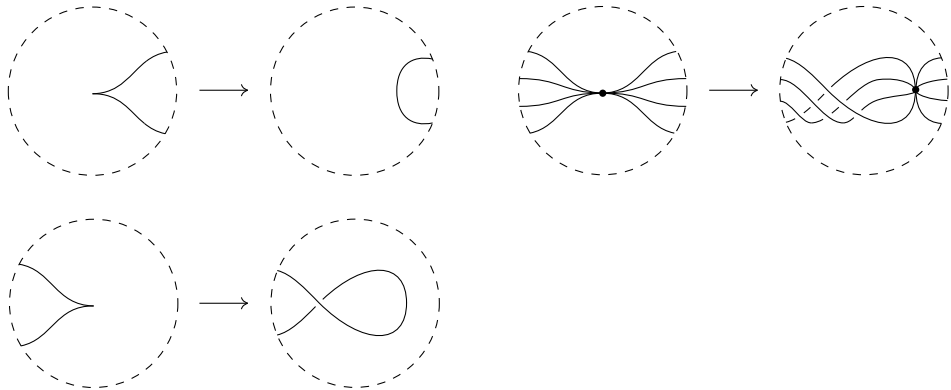}

	    \caption{Obtaining the Lagrangian projection of an isotopic
		    Legendrian from the front projection. In the operation to the 
		    upper right, the singularity is allowed to have any
	    	valency, and the operation consists of performing a twist of the
    		strands to left of the singularity.}
	    \label{fig:legendrian-to-lagrangian}
	\end{figure}
\end{definition}
\begin{proposition}[\cite{ABS22}]
	The Ng resolution is well defined in the sense that, after a planar
	isotopy, the Lagrangian projection indeed lifts to a Legendrian.
	Moreover, every singular Legendrian in $\R^{3}$ is Legendrian isotopic
	to its Ng resolution after deforming the angles at the singularities.
\end{proposition}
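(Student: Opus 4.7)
The plan is to split the statement into two parts: first, well-definedness, i.e.\ that the diagram produced by the moves in \autoref{fig:legendrian-to-lagrangian} is the Lagrangian projection of some actual Legendrian after a suitable planar isotopy; second, that this Legendrian is Legendrian isotopic to $\Lambda$ in the sense allowed by the statement (up to deformation of angles at the singularities). Both parts will be established by reducing to local models, since each of the operations in \autoref{fig:legendrian-to-lagrangian} is supported in an arbitrarily small neighborhood of a cusp, a crossing, or a singularity of the front.

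For the well-definedness, recall that a generic immersed curve in the $(x,y)$-plane lifts to a Legendrian in $(\R^3,\ker(dz-ydx))$ precisely when for every bounded face of the diagram, the oriented area integral $\int y\,dx$ around its boundary vanishes (with the appropriate convention for branches at double points), so that $z=\int y\,dx$ is well-defined on each strand and the height gap at each crossing is the correct one. In Ng's original smooth setting this is arranged by a planar isotopy that scales the face areas, and the lift is unique up to vertical translation. The new ingredient for singular Legendrians is the twist placed to the left of each singularity: I would show that near a singular point with $k$ meeting strands, the strands have $k$ distinct slopes (i.e.\ $y$-values), and that inserting a half-twist arranges them in the cyclic order dictated by their slopes, after which the local picture again admits a consistent $z$-lift by Stokes's theorem applied to the small faces created by the twist, adjusted by a planar isotopy that rescales their areas.

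For the Legendrian isotopy claim, it suffices to realize each local operation as a compactly supported ambient contact isotopy, possibly combined with an allowed deformation of angles at the singular points. The cusp-smoothing and crossing operations in the smooth part of the diagram are Ng's original moves, and their realization as Legendrian isotopies is well-known. The twist operation near a singularity will be handled by writing it as a concrete Legendrian isotopy of the $k$ strands that keeps the singular point fixed, rotates the tangent directions into the required slope order, and is smooth away from the singular point, while altering the angles at the singular point in the allowed fashion. Alternatively, this can be decomposed into a finite sequence of the singular Legendrian Reidemeister moves from \autoref{fig:sing-legendrian-reidemeister} combined with planar isotopy.

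The step I expect to be the main obstacle is the twist move at a singularity of arbitrary valency: one must verify both that the reordering of strands by slope produces a well-defined Lagrangian projection (no accidental triple points, correct over/under data at all new crossings) and that the associated ambient deformation is genuinely a Legendrian isotopy with angle deformation at the singular point, rather than something only smoothly isotopic. A clean way to handle this is to exhibit an explicit $1$-parameter family of contactomorphisms of a Darboux ball that implements the twist and restricts to the required motion on the strands, using that the singular point can be placed at the origin with strands tangent to a generic configuration of Legendrian lines.
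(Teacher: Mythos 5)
The paper does not actually prove this statement: it is quoted from An--Bae--Su and the proof given is just the citation to \cite[Lemma 2.2.16]{ABS22}. So the relevant comparison is between your outline and the argument one would actually have to carry out, and there your proposal has two genuine problems. First, your lifting criterion is wrong as stated. An immersed curve in the $(x,y)$-plane lifts to a Legendrian for $\ker(dz-ydx)$ when, on each closed component, the total integral $\oint y\,dx$ vanishes (so that $z=\int y\,dx$ closes up), and the lift is an embedded Legendrian with the prescribed crossing data when at every double point the difference of the two partial integrals is nonzero and has the sign dictated by the over/under assignment. By Stokes this is one weighted area condition per component plus open inequalities at crossings --- not the condition that ``every bounded face'' has vanishing signed area, which a generic diagram never satisfies and which would make the well-definedness argument collapse at the first step. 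The planar isotopy in the definition is exactly what adjusts face areas so that these (correct) conditions hold; your version of the criterion cannot be the basis of that adjustment.

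Second, you yourself identify the crux --- the half-twist inserted to the left of a valence-$k$ singularity, where one must check both that the resulting diagram lifts with the right crossing data and that the modification is realized by a Legendrian isotopy together with the allowed angle deformation --- and then defer it (``I would show\ldots'', ``a clean way to handle this is to exhibit an explicit $1$-parameter family of contactomorphisms''). That step is the entire new content of the singular case: in the front the strands at a singularity are ordered by $z$, while in the Lagrangian projection they are ordered by $y$ (the slope), and the half-twist is precisely the diagrammatic record of this reordering; verifying that it is induced by an honest ambient contact isotopy fixing the singular point (modulo angle change), with no spurious crossings and the correct over/under data among the $\binom{k}{2}$ new crossings, is where all the work lies. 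As written, the proposal is a plausible plan in the spirit of Ng's original argument and of \cite{ABS22}, but it does not yet constitute a proof: the foundational criterion needs to be corrected and the singularity-twist step needs to be carried out rather than announced.
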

\begin{proof}
	This is \cite[Lemma 2.2.16]{ABS22}.
\end{proof}

\section{Stopped subalgebras} 
\label{sec:stopped}
In this section we prove our main result, which provides simplified models of
the singular Chekanov--Eliashberg dg-algebra in $\R^{3}$.
\subsection{Bordered Legendrians}
We will be working with bordered Legendrians, of which we here give an
overview. A more detailed account of this topic can
be found in \cite{ABS22}.   

\begin{definition}
A \emph{bordered Legendrian} in $\R^{3}$ is a subset
$\Gamma \subset  \R^{3}$ of the form $\Gamma = \Lambda \cap \{ x \in \R^{3} : | x | \leq M\}$
where $\Lambda$ is a proper embedding of a singular Legendrian skeleton of a Weinstein hypersector 
$V \subset \R^{3}$ such that for $| x | \geq M$, each point of $\Lambda$ has 
$y$-coordinate equal to $0$ (possibly with finitely many two-valent singularities). If $\Lambda$ does not have any singularities
inside $\{  x \in \R^{3} : | x | \leq M\}$ we call $\Gamma$ \emph{smooth} and
otherwise we say that
$\Gamma$ is \emph{singular}. If $\{ x \in \R^{3} : x=-M\} \cap \Gamma = \emptyset$ or 
$\{ x \in \R^{3} : x=M\} \cap \Gamma = \emptyset$ we say that $\Gamma$ has no
\emph{left ends} or \emph{right ends}, respectively.
\end{definition}

\begin{definition}
Let $\Gamma$ be a bordered Legendrian with no left ends. Let $\Lambda'$
be the singular Legendrian obtained by closing up the left ends of $\Gamma$
into a singularity consisting of a single $0$-cell, as shown to the left in
\autoref{fig:closing}. Let $V'$ be the
thickening of $\Lambda'$ to a Weinstein surface with skeleton $\Lambda'$.
The \emph{bordered Chekanov--Eliashberg dg-algebra} of $\Gamma$,
denoted by $CE^{*}( \Gamma;V_{0};\R^{4} )$, is the unital dg-subalgebra of 
the Chekanov--Eliashberg dg-algebra $CE^{*}( \Lambda';V_{0}';\R^{4} )$
generated by all chords which through the Ng resolution correspond to crossings
in the front of $\Gamma$, and the chords of $\partial \Lambda'$ which belong to
singularities of $\Gamma$. 
\begin{figure}[!htb]
    \centering
    
\begingroup%
  \makeatletter%
  \providecommand\color[2][]{%
    \errmessage{(Inkscape) Color is used for the text in Inkscape, but the package 'color.sty' is not loaded}%
    \renewcommand\color[2][]{}%
  }%
  \providecommand\transparent[1]{%
    \errmessage{(Inkscape) Transparency is used (non-zero) for the text in Inkscape, but the package 'transparent.sty' is not loaded}%
    \renewcommand\transparent[1]{}%
  }%
  \providecommand\rotatebox[2]{#2}%
  \newcommand*\fsize{\dimexpr\f@size pt\relax}%
  \newcommand*\lineheight[1]{\fontsize{\fsize}{#1\fsize}\selectfont}%
  \ifx\svgwidth\undefined%
    \setlength{\unitlength}{264.97974546bp}%
    \ifx\svgscale\undefined%
      \relax%
    \else%
      \setlength{\unitlength}{\unitlength * \real{\svgscale}}%
    \fi%
  \else%
    \setlength{\unitlength}{\svgwidth}%
  \fi%
  \global\let\svgwidth\undefined%
  \global\let\svgscale\undefined%
  \makeatother%
  \begin{picture}(1,0.24157684)%
    \lineheight{1}%
    \setlength\tabcolsep{0pt}%
    \put(0,0){\includegraphics[width=\unitlength,page=1]{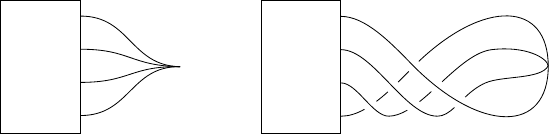}}%
    \put(0.06167841,0.10380619){\color[rgb]{0,0,0}\makebox(0,0)[lt]{\lineheight{1.25}\smash{\begin{tabular}[t]{l}$\Gamma$\end{tabular}}}}%
    \put(0.53358731,0.10380635){\color[rgb]{0,0,0}\makebox(0,0)[lt]{\lineheight{1.25}\smash{\begin{tabular}[t]{l}$\Gamma$\end{tabular}}}}%
    \put(0,0){\includegraphics[width=\unitlength,page=2]{closing.pdf}}%
  \end{picture}%
\endgroup%

    \caption{The closure $\Lambda'$ of $\Lambda$, illustrated in the front
    projection to the left and the Lagrangian projection to the right.}
    \label{fig:closing}
\end{figure}
\end{definition}
\begin{lemma}
	The algebra $CE^{*}( \Gamma;V_{0};\R^{4} )$ is well-defined in the sense
	that it is closed under the differential.
\end{lemma}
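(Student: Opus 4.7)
The plan is to show that for each generator $c$ of $CE^{*}(\Gamma; V_{0}; \R^{4})$, every pseudo-holomorphic disk in $\R^{2, \circ}$ contributing to the differential of $c$ in $CE^{*}(\Lambda'; V_{0}'; \R^{4})$ has all of its negative asymptotics at generators that also lie in the bordered subalgebra. Once this is established, $\partial c$ is itself a sum of words in bordered generators and closure under $\partial$ follows.

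There are two types of generators to consider. If $c = t_{ij}^{p}$ is a chord of $\partial \Lambda'$ at a singularity of $\Gamma$, the explicit description of $\partial$ in \autoref{res:zero-dim-diff} shows that $\partial_{0} c$ is a sum of concatenations of shorter chords at the same singularity and that $\partial_{-1} c$ is either zero or the idempotent of that singularity. All of these remain in the bordered subalgebra, so this case is immediate. The substantial case is when $c$ corresponds via the Ng resolution to a crossing in the front of $\Gamma$; then $c$ is a double point of $\widetilde{\Lambda'}$ lying in the bounded region containing $\Gamma$. Here I must rule out the possibility that a rigid disk with positive puncture at $c$ has a negative puncture at either a double point in the closure region or an internal chord of $\partial \Lambda'$ at the new singularity introduced by the closure.

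To rule this out, I would use the specific geometry of the closure. The new singularity consists of a single $0$-cell on the far left, and the strands joining it to $\Gamma$ are horizontal (from the condition $y = 0$ for $|x| \geq M$) and carry no crossings among themselves in the Lagrangian projection. Thus the closure region is a simply connected domain of $\R^{2,\circ}$ bounded by arcs of $\widetilde{\Lambda'}$ that contain no double points in their interiors; the only generators of $CE^{*}(\Lambda'; V_{0}'; \R^{4})$ located strictly to the left of $\Gamma$ are chords of $\partial \Lambda'$ at the new singularity. Any holomorphic disk with positive puncture at $c$ whose boundary enters the closure region would therefore have to travel along the horizontal strands all the way to the new singularity, cross it, and return, producing a configuration that is either non-rigid or has negative total area by Stokes' theorem.

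The main obstacle is making this geometric intuition rigorous. The cleanest route is an SFT-stretching argument along the neck separating the $\Gamma$-region from the closure region: any disk escaping from $\Gamma$ degenerates in the stretched limit into a two-level building with a top disk in the $\Gamma$-region and a bottom disk in the closure region sharing a puncture at an intermediate chord along the neck. A direct inspection, using that the closure's top strata carry no double points and meet only at the new singularity, shows that the only rigid bottom disk available is a trivial strip, which forces the intermediate chord already to be in the bordered region. This completes the verification that $\partial c$ stays inside the bordered subalgebra.
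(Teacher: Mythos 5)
Your overall strategy (check generator by generator that every contributing disk has all negative asymptotics in the bordered subalgebra) is sound, and your treatment of the boundary chords at singularities of $\Gamma$ via \autoref{res:zero-dim-diff} is fine. The gap is in the crossing case: you assert that the closure region ``carries no crossings'' and that the only generators to the left of $\Gamma$ are the boundary chords at the new singularity, and your concluding SFT-stretching step is explicitly premised on ``the closure's top strata carry no double points.'' That premise is false in the setting where the differential is actually computed. The differential of $CE^{*}( \Lambda';V_{0}';\R^{4} )$ is computed in the Ng-resolved Lagrangian projection, and closing the left ends of $\Gamma$ into a single $0$-cell forces a twist of the closing strands near the new singularity (this is the twist visible on the right of \autoref{fig:closing}). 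The crossings of that twist are genuine double points of $\widetilde{\Lambda'}$, hence Reeb chord generators of $CE^{*}( \Lambda';V_{0}';\R^{4} )$ that are \emph{excluded} from the bordered subalgebra; they are exactly the ``excluded chords of $\Lambda'$'' that the lemma must keep out of the differentials, and your argument never addresses them.

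The paper's proof handles precisely this point by an action comparison: using the Ng resolution one arranges that all excluded chords of $\Lambda'$ (the twist crossings) have strictly higher action than the included generators, and since each disk has positive area the differential strictly decreases action, so these chords cannot appear in $\partial$ of any bordered generator. For the remaining excluded generators, the boundary chords of $\partial \Lambda'$ at the new singularity, the paper argues geometrically that no disk contributing to the differential of a bordered generator can cross the twist --- this is the role your neck-stretching paragraph is trying to play, but as written it is both resting on the wrong geometric picture and asserting the key steps (``non-rigid or negative area,'' ``the only rigid bottom disk is a trivial strip'') without justification. To repair your proof you would either need to incorporate the twist crossings and rule them out (the action filtration is the clean way to do this), or justify a closure model with no double points, which is not what the construction in the paper provides.
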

\begin{proof}
	Using the Ng resolution, one sees that the chords of $\Lambda'$
	which are excluded from $CE^{*}( \Gamma;V_{0};\R^{4} )$ are all of
	higher action than the ones included, and can therefore not occur in
	the differential of any chord in $CE^{*}( \Gamma;V_{0};\R^{4} )$. The
	chords of $\partial \Lambda'$ at the singularity in
	\autoref{fig:closing} can not occur in the differential as no
	disks contributing to the differential of $CE^{*}( \Gamma;V_{0};\R^{4}
	)$ can cross the twist to the right in \autoref{fig:closing}.
\end{proof}

This algebra was first introduced in \cite{Siv11} for smooth bordered Legendrians and
generalized to singular bordered Legendrians in \cite{ABS22}. 
In \cite{ABS22}, it is shown that
$CE^{*}(\Gamma;V_{0};\R^{4} )$ is invariant up to quasi-isomorphism
under Legendrian isotopy of $\Gamma$, by which we mean an isotopy of $\Lambda$ which is
constant outside $\{  x \in \R^{3} : | x | < M\}$. 

\begin{theorem}[{\cite[Theorem 3.3.15]{ABS22}}]
	The dg-algebra $CE^{*}( \Gamma;V_{0};\R^{4})$ is up to quasi-isomorphism
	invariant under Legendrian isotopy of $\Gamma$ supported in some fixed
	compact subset.
\end{theorem}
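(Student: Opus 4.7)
The plan is to deduce this from the invariance of the full singular Chekanov--Eliashberg algebra given by \autoref{res:invariance}, by first extending the compactly supported isotopy to the closure $\Lambda'$ and then showing that the resulting quasi-isomorphism restricts to the bordered subalgebras. Let $\Lambda_s$, $s\in[0,1]$, realize the isotopy of $\Lambda=\Lambda_0$, with support in $\{|x|\le M_0\}$. Choosing $M>M_0$ so that the closing construction lies in $\{|x|\ge M\}$, the isotopy extends to a Legendrian isotopy $\Lambda'_s$ of the closures which is constant outside $\{|x|\le M_0\}$. Then \autoref{res:invariance} provides a quasi-isomorphism
\[
	\Psi\colon CE^{*}(\Lambda'_0;V'_0;\R^{4}) \isomto CE^{*}(\Lambda'_1;V'_0;\R^{4}),
\]
and the remaining task is to show $\Psi$ restricts to a quasi-isomorphism between $CE^{*}(\Gamma_0;V_0;\R^{4})$ and $CE^{*}(\Gamma_1;V_0;\R^{4})$.

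Next I would decompose the isotopy $\Lambda'_s$, via \autoref{res:weinstein-lemma}, into a finite sequence of elementary moves, namely singular Reidemeister moves from \autoref{fig:sing-legendrian-reidemeister} and Weinstein handle moves from \autoref{fig:weinstein-reidemeister}, each of which is supported inside $\{|x|\le M_0\}$. For each such move, the corresponding chain map on the full Chekanov--Eliashberg algebra is, in the Ng resolution, an An--Bae-type stable tame isomorphism with an explicit local formula: it acts as the identity on all generators outside a neighborhood of the move and modifies the generators inside that neighborhood by prescribed polynomials in the nearby generators. Since the neighborhood is contained in the bordered region, each elementary map preserves the bordered subalgebra and restricts to a stable tame isomorphism of it. Composing these restrictions yields the sought-after restriction of $\Psi$, which is then automatically a quasi-isomorphism.

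The main obstacle is verifying that the local formulas really do not involve any excluded generators, namely chords in the closing twist, or boundary chords at the closing singularity. This will follow from the same geometric argument used in the lemma that $CE^{*}(\Gamma;V_0;\R^{4})$ is closed under the differential: any holomorphic disk feeding into such a local formula would need to cross the closing twist to reach an excluded generator, and no rigid disk supported inside $\{|x|\le M_0\}$ can do so without exceeding the action budget available to the move. With this localization in hand, the elementary-move-by-elementary-move argument produces the desired quasi-isomorphism of bordered subalgebras, completing the proof.
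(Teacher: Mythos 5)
First, note that the paper does not actually prove this statement; it is quoted from \cite[Theorem 3.3.15]{ABS22}, so you are attempting to reprove the cited result. Your overall strategy (close up the left ends, extend the compactly supported isotopy to the closure, and show that the invariance maps preserve the bordered subalgebra) is reasonable, but two steps have genuine problems. The first is the decomposition of the isotopy: you invoke \autoref{res:weinstein-lemma} and allow Weinstein handle moves from \autoref{fig:weinstein-reidemeister} as elementary moves. A Legendrian isotopy requires only planar isotopies and the singular Reidemeister moves of \cite{BI09}; including handle introductions and contractions is not only unnecessary but fatal to your argument, since such moves do not preserve the quasi-isomorphism type of the Chekanov--Eliashberg algebra (this is exactly the point of \autoref{res:weinstein-isotopy-cor} and of the examples in Section 5), so a composition containing them could not produce the quasi-isomorphism $\Psi$ you want to restrict.

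The second, and central, problem is that the locality claim is both overstated and unproven. The stable tame isomorphisms of Chekanov/An--Bae type attached to a Reidemeister move are in general \emph{not} the identity on all generators outside a neighborhood of the move with corrections that are polynomials in nearby generators: for the moves creating or cancelling a pair of crossings, the correction terms are built from the differential, hence from words of generators that may lie anywhere in the diagram. What you actually need --- and this is the real content of \cite[Theorem 3.3.15]{ABS22} --- is that these words never involve the excluded generators (the closing-twist crossings and the boundary chords at the closing singularity), that the same holds for the inverse maps, and that the stabilizing generators can be taken inside the bordered part, so that the restriction is again a stable tame isomorphism rather than merely an injective chain map. Your action argument gestures at this but does not establish it: the disks to be excluded are not a priori ``supported inside $\{|x|\le M_0\}$'' (that is what must be shown), and ``the action budget available to the move'' is undefined for a combinatorial tame isomorphism. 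One would have to set up the action filtration, arrange the closure so that every excluded chord has action exceeding every included chord uniformly along the compactly supported isotopy, and then verify move by move that the explicit formulas respect this filtration (as in the lemma showing $CE^{*}(\Gamma;V_{0};\R^{4})$ is closed under the differential). Until that verification is carried out, the key step of your proof is missing.
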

\begin{proof}
	See {\cite[Theorem 3.3.15]{ABS22}}.
\end{proof}

\begin{definition}
Let $\Gamma$ be a bordered Legendrian. The \emph{reflection} of
$\Gamma$ is the bordered Legendrian ${}^{*}\Gamma$ whose front is the
reflection of the front of $\Gamma$ over the horizontal axis.
\end{definition}

\begin{lemma}
\label{res:push-twist}
	Let $\Gamma$ be a bordered Legendrian. Let $\Lambda$
	be a Legendrian whose front contains $\Gamma$ as a
	tangle, along with a positive or negative half-twist 
	as illustrated to the left in \autoref{fig:half-twists}. 
	Then the operations 
	in \autoref{fig:half-twists} preserve the Legendrian isotopy type of
	$\Lambda$.
\begin{figure}[!htb]
    \centering
    
\begingroup%
  \makeatletter%
  \providecommand\color[2][]{%
    \errmessage{(Inkscape) Color is used for the text in Inkscape, but the package 'color.sty' is not loaded}%
    \renewcommand\color[2][]{}%
  }%
  \providecommand\transparent[1]{%
    \errmessage{(Inkscape) Transparency is used (non-zero) for the text in Inkscape, but the package 'transparent.sty' is not loaded}%
    \renewcommand\transparent[1]{}%
  }%
  \providecommand\rotatebox[2]{#2}%
  \newcommand*\fsize{\dimexpr\f@size pt\relax}%
  \newcommand*\lineheight[1]{\fontsize{\fsize}{#1\fsize}\selectfont}%
  \ifx\svgwidth\undefined%
    \setlength{\unitlength}{213.18031131bp}%
    \ifx\svgscale\undefined%
      \relax%
    \else%
      \setlength{\unitlength}{\unitlength * \real{\svgscale}}%
    \fi%
  \else%
    \setlength{\unitlength}{\svgwidth}%
  \fi%
  \global\let\svgwidth\undefined%
  \global\let\svgscale\undefined%
  \makeatother%
  \begin{picture}(1,0.89854764)%
    \lineheight{1}%
    \setlength\tabcolsep{0pt}%
    \put(0,0){\includegraphics[width=\unitlength,page=1]{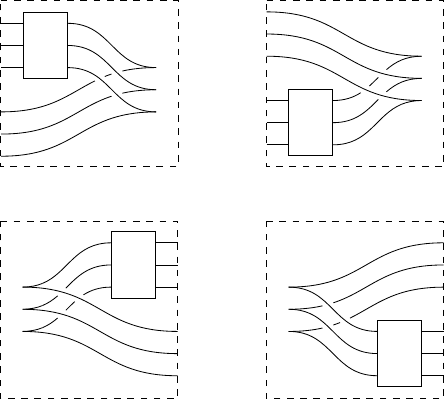}}%
    \put(0.08816361,0.77510932){\color[rgb]{0,0,0}\makebox(0,0)[lt]{\lineheight{1.25}\smash{\begin{tabular}[t]{l}$\Gamma$\end{tabular}}}}%
    \put(0.28604201,0.28086576){\color[rgb]{0,0,0}\makebox(0,0)[lt]{\lineheight{1.25}\smash{\begin{tabular}[t]{l}$\Gamma$\end{tabular}}}}%
    \put(0.66444651,0.60823938){\color[rgb]{0,0,0}\makebox(0,0)[lt]{\lineheight{1.25}\smash{\begin{tabular}[t]{l}${}^{*}\Gamma$\end{tabular}}}}%
    \put(0.86426754,0.08825736){\color[rgb]{0,0,0}\makebox(0,0)[lt]{\lineheight{1.25}\smash{\begin{tabular}[t]{l}${}^{*}\Gamma$\end{tabular}}}}%
    \put(0,0){\includegraphics[width=\unitlength,page=2]{half-twists.pdf}}%
  \end{picture}%
\endgroup%

    \caption{Pushing $\Gamma$ through the cusp of a half-twist.}
    \label{fig:half-twists}
\end{figure}
\end{lemma}
\begin{remark}
	Note that $\Gamma$ need not have the same number of left ends as right
	ends, or may lack left or right ends altogether. 
\end{remark}
\begin{proof}	
	We perform Reidemeister moves and push the left cusps, right cusps,
	crossings, and singularities of $\Gamma$ through the half-twist, as shown in
	\autoref{fig:elementary-permutation}. Doing this turns left cusps into
	right cusps and vice versa, flips the singularities, and reverses the
	order, so we get the bordered Legendrian ${}^{*}\Gamma$ on the other
	side. See the proof of \cite[Lemma 5.1.4]{Ng01} for a similar
	construction.
\begin{figure}[!htb]
    \centering
    
    \import{./figures/}{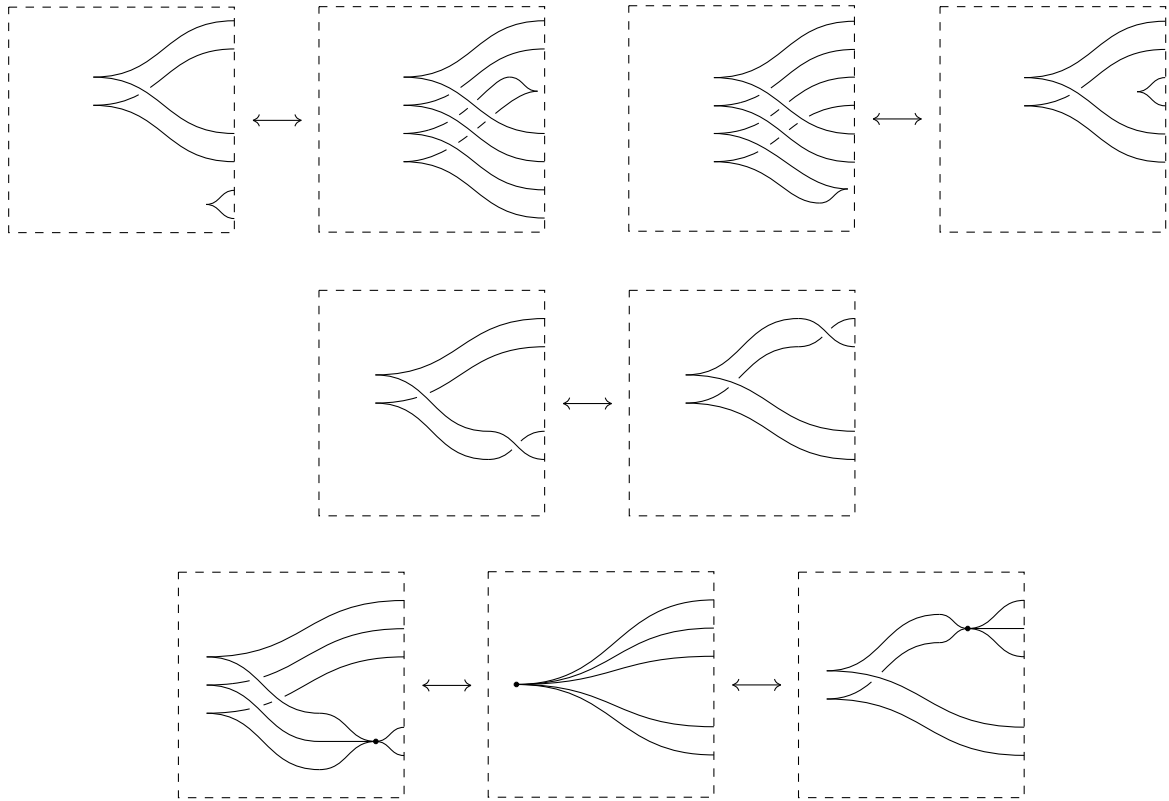}

    \caption{Pushing cusps, crossings, and singularities through a half-twist.}
    \label{fig:elementary-permutation}
\end{figure}
\end{proof}

\subsection{Stopped subalgebras}
Let $\Lambda$ be the singular Legendrian skeleton of some Weinstein
hypersurface $V \subset \R^{3}$. 

\begin{definition}
If there are two values $x_0 < x_1$ such that all singularities of $\Lambda$ have
$x$-coordinates equal to either $x_0$ or $x_1$ and all other points of $\Lambda$ have
$x$-coordinates in the interval $( x_0,x_1 )$ we say that $\Lambda$ has 
\emph{all singularities to the sides}. We call the singularities with
$x$-coordinate equal to $x_0$ \emph{left singularities} and those with
$x$-coordinate equal to $x_{1}$ \emph{right singularities}.
\end{definition}

Every singular Legendrian in $\R^{3}$ is isotopic to a Legendrian with all
singularities to the sides and we now assume that $\Lambda$ is in such a
position. We construct a bordered Legendrian $\Lambda
\cup \Omega$ by attaching one strand $\Omega_{i}$ to each left singularity of
$\Lambda$, numbering from bottom to top, as shown in
\autoref{fig:exact-stops}. 
\begin{figure}[!htb]
    \centering
    
\begingroup%
  \makeatletter%
  \providecommand\color[2][]{%
    \errmessage{(Inkscape) Color is used for the text in Inkscape, but the package 'color.sty' is not loaded}%
    \renewcommand\color[2][]{}%
  }%
  \providecommand\transparent[1]{%
    \errmessage{(Inkscape) Transparency is used (non-zero) for the text in Inkscape, but the package 'transparent.sty' is not loaded}%
    \renewcommand\transparent[1]{}%
  }%
  \providecommand\rotatebox[2]{#2}%
  \newcommand*\fsize{\dimexpr\f@size pt\relax}%
  \newcommand*\lineheight[1]{\fontsize{\fsize}{#1\fsize}\selectfont}%
  \ifx\svgwidth\undefined%
    \setlength{\unitlength}{151.45558395bp}%
    \ifx\svgscale\undefined%
      \relax%
    \else%
      \setlength{\unitlength}{\unitlength * \real{\svgscale}}%
    \fi%
  \else%
    \setlength{\unitlength}{\svgwidth}%
  \fi%
  \global\let\svgwidth\undefined%
  \global\let\svgscale\undefined%
  \makeatother%
  \begin{picture}(1,0.93024509)%
    \lineheight{1}%
    \setlength\tabcolsep{0pt}%
    \put(0,0){\includegraphics[width=\unitlength,page=1]{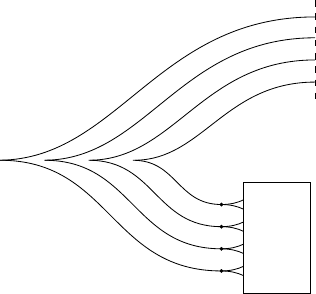}}%
    \put(0.84956713,0.15444858){\color[rgb]{0,0,0}\makebox(0,0)[lt]{\lineheight{1.25}\smash{\begin{tabular}[t]{l}$\Lambda$\end{tabular}}}}%
    \put(0.27751066,0.65653802){\color[rgb]{0,0,0}\makebox(0,0)[lt]{\lineheight{1.25}\smash{\begin{tabular}[t]{l}$\Omega$\end{tabular}}}}%
  \end{picture}%
\endgroup%

    \caption{Adding strands $\Omega = \Omega_{1} \cup
	    \Omega_{2} \cup \ldots \cup \Omega_{k}$ to $\Lambda$. There are no
    Reeb chords of $\Lambda \cup \Omega$ with endpoints on $\Omega$.}
    \label{fig:exact-stops}
\end{figure}
Note that $\partial \Lambda$ consists of two-point
spheres and $\partial \Omega$ of one-point stops.
The bordered Legendrian $\Lambda \cup \Omega$ has the property that there are
no Reeb chords
of $\Lambda \cup \Omega$ with an endpoint on $\Omega$ in the algebra 
$CE^{*}( \Lambda \cup \Omega;V_{0};\R^{4} )$. Note however that there are Reeb
chords of $\partial \Lambda \cup \partial \Omega$ with endpoints on $\partial
\Omega$.

\begin{definition}
The \emph{stopped subalgebra} of $CE^{*}( \Lambda \cup \Omega;V_{0};\R^{4} )$, denoted by 
$CE^{*}(\Lambda;V_{0,\partial \Omega};\R^{4} )$, is the unital dg-subalgebra generated by all
chords of $\Lambda$, and the chords of $\partial \Lambda$ which do not pass
though $\partial \Omega$.
\end{definition}
\begin{lemma}
	The stopped subalgebra is a subcomplex, and hence itself a unital
	dg-algebra.
\end{lemma}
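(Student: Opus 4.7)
The plan is to verify closedness of $\partial$ on generators, handling the two types of generators of the stopped subalgebra separately.

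For a chord $c = c_{ij}^{p}$ of $\partial \Lambda$ not passing through $\partial \Omega$, I would use the explicit description of the differential from \autoref{res:zero-dim-diff}, namely $\partial c = \partial_{0} c + \partial_{-1} c$. The term $\partial_{-1} c$ is either zero or an idempotent and so automatically lies in the subalgebra. Each term of $\partial_{0} c$ splits $c$ into two sub-arcs at an intermediate point $k \in \partial \Lambda \cup \partial \Omega$ through which $c$ passes. Since $c$ avoids $\partial \Omega$ by assumption, $k$ must lie in $\partial \Lambda$, and the two resulting sub-arcs are themselves chords on $\partial V_{0}$ that avoid $\partial \Omega$; both factors therefore lie in the stopped subalgebra.

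For a chord $a$ of $\Lambda$ in $\R^{3}$, the differential $\partial a$ is a signed count of admissible disks in the Lagrangian projection of $\Lambda \cup \Omega$ in $\R^{2,\circ}$ with positive puncture at $a$. The key input is the defining property of $\Omega$: no Reeb chord of $\Lambda \cup \Omega$ has an endpoint on $\Omega$, so the projections of $\Lambda$ and $\Omega$ are disjoint in the interior of $\R^{2,\circ}$. Since a boundary arc of such a disk can only switch between components of $\widetilde{\Lambda \cup \Omega}$ at a double point, the whole boundary stays on $\widetilde{\Lambda}$. Consequently, interior negative punctures produce chords of $\Lambda$, and negative boundary arcs on $\partial V_{0}$ have endpoints in $\partial \Lambda$, as needed.

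The main obstacle is then to rule out that such a negative boundary arc passes over a point of $\partial \Omega$ in its interior. The plan is to argue geometrically: each strand $\Omega_{k}$ emerges transversely from $\partial \Omega_{k}$ into the side of $\partial V_{0}$ occupied by the disk's interior, and is disjoint from $\widetilde{\Lambda}$. If the arc contained $\partial \Omega_{k}$ in its interior, then $\Omega_{k}$ would enter the disk's interior while being part of the Lagrangian projection but not of the disk's boundary, cutting the disk into disconnected pieces and contradicting its being a single admissible disk. Combining the two cases, $\partial$ preserves the stopped subalgebra, and the lemma follows.
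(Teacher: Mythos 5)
Your treatment of the first type of generator is the same as the paper's: for a chord $c_{ij}^{p}$ of $\partial \Lambda$ avoiding $\partial \Omega$, the explicit formula of \autoref{res:zero-dim-diff} shows that every term of $\partial_{0}c$ splits $c$ at a point it actually passes through, hence at a point of $\partial \Lambda$, producing two sub-chords that again avoid $\partial \Omega$, while $\partial_{-1}c$ is an idempotent of a component of $\partial\Lambda$. That half is fine.

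The half concerning chords of $\Lambda$ (which the paper dispatches with ``this can be seen using the Ng resolution'') has a genuine gap, and in fact both of your key justifications invoke properties that admissible disks do not have. First, the assertion that a boundary arc ``can only switch between components at a double point'' is false for the disks of this theory: at a negative corner at a singular vertex asymptotic to a chord of $\partial V_{0}$ from $i$ to $j$, the boundary jumps from the strand attached at $i$ to the strand attached at $j$ (this is exactly how the generators $t_{ij}^{p}$ enter differentials, as in the pinched figure-eight example). This is precisely the mechanism by which a chord with an endpoint on some $\partial\Omega_{k}$ could appear in $\partial a$, so concluding that the boundary stays on $\widetilde{\Lambda}$ assumes what is to be proved; the correct reason such a corner cannot occur is that the adjacent boundary arc would have to lie on $\widetilde{\Omega}_{k}$, which carries no double points and meets a vertex only at its single endpoint $\partial\Omega_{k}$, so that arc has no corner at which to terminate. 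Second, the claim that a strand entering the interior of the disk would ``cut the disk into disconnected pieces'' is not valid: these disks are immersed, not embedded plane regions, and strands of the Lagrangian projection routinely pass through the interiors of admissible disks without any contradiction. What actually excludes a chord passing through $\partial\Omega_{k}$ is that $\widetilde{\Omega}_{k}$ would emanate from the corner point into the image of the disk and, having no crossing with anything in the projection, could never escape through $u(\partial D)$, even though it runs off towards the left end of the diagram (to make this airtight one also needs that the disk cannot reach the closing twist, i.e.\ the action argument used for the well-definedness of the bordered algebra). As written, the exclusion of chords with an endpoint on, or passing through, $\partial\Omega$ is therefore not established.
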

\begin{proof}
	By \autoref{res:zero-dim-diff}, the differential of a chord of
	$\partial \Lambda \cup \partial \Omega$ which does not pass through
	$\partial \Omega$, nor has any endpoint on $\partial \Omega$, 
	will not contain any chords which
	pass through or have endpoints on $\partial \Omega$. For
	a chord $c$ of $\Lambda$, it is clear from the Ng resolution that 
	$\partial c$ does not contain any chords of $\partial \Lambda \cup
	\partial \Omega$ which pass through or have endpoints on $\partial
	\Omega$
\end{proof}
If the base points of $\partial V_{0}$ are placed near $\partial \Omega$ then 
the chords of $\partial \Lambda$ which do not pass though $\partial \Omega$
are precisely the chords of the form $c_{ij}^{0}$. In particular, $CE^{*}(
\Lambda \cup \Pi;V_{0};W )$ is finitely generated. Note that $CE^{*}(
\Lambda;V_{0,\partial \Omega};W )$ also embeds as subalgebra of $CE^{*}(
\Lambda;V_{0};W )$.

The motivation for the notation comes from considering $CE^{*}(
\Lambda;V_{0,\partial \Omega};W )$ as being a
version of the relative algebra of \autoref{ssec:ceforsing}, 
but where $\Sigma( \Sigma )$ is a
non-compact stop at which we attach a 'non-compact half-handle', as shown in 
\autoref{fig:singular-definition-stop}. 
\begin{figure}[!htb]
    \centering
    
\begingroup%
  \makeatletter%
  \providecommand\color[2][]{%
    \errmessage{(Inkscape) Color is used for the text in Inkscape, but the package 'color.sty' is not loaded}%
    \renewcommand\color[2][]{}%
  }%
  \providecommand\transparent[1]{%
    \errmessage{(Inkscape) Transparency is used (non-zero) for the text in Inkscape, but the package 'transparent.sty' is not loaded}%
    \renewcommand\transparent[1]{}%
  }%
  \providecommand\rotatebox[2]{#2}%
  \newcommand*\fsize{\dimexpr\f@size pt\relax}%
  \newcommand*\lineheight[1]{\fontsize{\fsize}{#1\fsize}\selectfont}%
  \ifx\svgwidth\undefined%
    \setlength{\unitlength}{302.4969963bp}%
    \ifx\svgscale\undefined%
      \relax%
    \else%
      \setlength{\unitlength}{\unitlength * \real{\svgscale}}%
    \fi%
  \else%
    \setlength{\unitlength}{\svgwidth}%
  \fi%
  \global\let\svgwidth\undefined%
  \global\let\svgscale\undefined%
  \makeatother%
  \begin{picture}(1,0.40664688)%
    \lineheight{1}%
    \setlength\tabcolsep{0pt}%
    \put(0,0){\includegraphics[width=\unitlength,page=1]{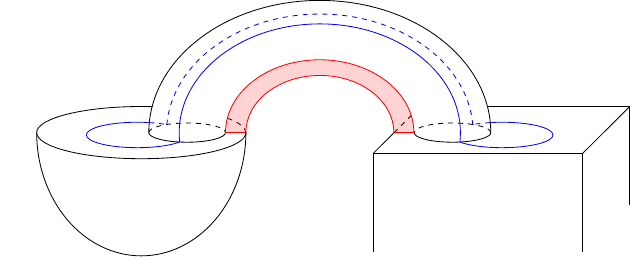}}%
    \put(0.47710229,0.23123067){\color[rgb]{0,0,0}\makebox(0,0)[lt]{\lineheight{1.25}\smash{\begin{tabular}[t]{l}\color{red}\footnotesize$L_{\Sigma( \Omega )}$\end{tabular}}}}%
    \put(0.1770177,0.31837749){\color[rgb]{0,0,0}\makebox(0,0)[lt]{\lineheight{1.25}\smash{\begin{tabular}[t]{l}\color{blue}\footnotesize$\Sigma( \Lambda )$\end{tabular}}}}%
    \put(0.01170624,0.23452452){\color[rgb]{0,0,0}\makebox(0,0)[lt]{\lineheight{1.25}\smash{\begin{tabular}[t]{l}\footnotesize$\partial W$\end{tabular}}}}%
    \put(0.20433809,0.07929254){\color[rgb]{0,0,0}\makebox(0,0)[lt]{\lineheight{1.25}\smash{\begin{tabular}[t]{l}\footnotesize$W$\end{tabular}}}}%
    \put(0.67858619,0.07929254){\color[rgb]{0,0,0}\makebox(0,0)[lt]{\lineheight{1.25}\smash{\begin{tabular}[t]{l}\footnotesize$\R \times ( \R \times V )$\end{tabular}}}}%
  \end{picture}%
\endgroup%

    \caption{The cobordism $W_{V,\Sigma(\Omega)}^{0}$, where $L_{\Sigma( \Omega )}$ 
    is the core of the half-handle attached at $\Sigma(\Omega)$.}
    \label{fig:singular-definition-stop}
\end{figure}
We do not give the full details of the
geometric construction, but note that attaching the non-compact half-handle
corresponds to putting a stop diffeomorphic to a Legendrian arc in the boundary
of the Weinstein cobordism $W^0_V$.

\begin{lemma}
\label{res:stopped-subalgebra-prop}
	The canonical inclusion	
	\[
		CE^{*}( \Lambda;V_{0,\partial \Omega};\R^{4} ) 
		\hookrightarrow
		CE^{*}( \Lambda \cup \Omega;V_{0};\R^{4} )
	\]
	is a quasi-isomorphism onto 
	$CE^{*}( \Lambda \cup \Omega;V_{0};\R^{4} )[\Lambda,\Lambda]$.	
\end{lemma}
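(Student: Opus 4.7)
The plan is to adapt the filtration and spectral sequence argument from the proof of \autoref{res:surgery-map-surface}, filtering by the total number of passes of boundary chords through $\partial \Omega$ and identifying the $E_1$ page with the stopped subalgebra via \autoref{res:short-chords-generate}.

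First I check that $\iota$ is a dg-algebra map landing in $[\Lambda, \Lambda]$. Each generator of the stopped subalgebra has its source and target vertex in $\Lambda$, so the image of $\iota$ is contained in $[\Lambda,\Lambda]$. The preceding lemma already establishes that the differential of the full algebra restricts to the stopped subalgebra, so $\iota$ is a chain map, and it is automatically a morphism of dg-algebras.

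Second, I introduce a filtration $F_k$ on $CE^{*}(\Lambda \cup \Omega; V_0; \R^4)[\Lambda, \Lambda]$, where $F_k$ is spanned by words whose boundary chord factors have combined winding at most $k$ through $\partial \Omega$. By construction $F_0 = \iota\bigl(CE^{*}(\Lambda; V_{0,\partial \Omega}; \R^4)\bigr)$. I verify that $\partial$ preserves this filtration: for boundary chords this follows from \autoref{res:zero-dim-diff}, since cuts at points of $\partial \Lambda$ preserve the winding, cuts at $\partial \Omega$ strictly decrease it, and the $\partial_{-1}$ term strictly decreases it as well; for Reeb chords of $\Lambda$ in the Ng resolution, the disks cannot cross the strands $\Omega$, so no boundary chord output passes through $\partial \Omega$. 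This is exactly the argument used in the preceding lemma to show that the stopped subalgebra is a subcomplex.

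Third, I compute the associated graded. At level $k \ge 1$ the induced differential keeps only the winding-preserving pieces. By \autoref{res:short-chords-generate}, the $\partial_0$-cohomology of the boundary algebra $CE^{*}(\partial \Lambda \cup \partial \Omega; V_0)$ is generated by unconcatable words of short chords. A short chord is an arc between consecutive points of $\partial \Lambda \cup \partial \Omega$ and hence has no passes through $\partial \Omega$. Moreover, in an unconcatable word whose initial and terminal vertices lie in $\partial \Lambda$, no intermediate vertex may sit on $\partial \Omega$: since $\partial \Omega$ consists of one-point stops, two adjacent short chords meeting there are automatically concatenable. Hence every unconcatable word of short chords in $[\Lambda,\Lambda]$ already belongs to $F_0$. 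Combined with the observation that the Reeb chord disk count of $\Lambda$ is the same in both algebras, this shows $F_k / F_{k-1}$ is acyclic for $k \ge 1$, so the spectral sequence collapses and $\iota$ is a quasi-isomorphism onto $[\Lambda,\Lambda]$.

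The main obstacle is the spectral sequence bookkeeping: one must verify carefully that all pieces of the full Legendrian differential respect the winding filtration in the Ng model, and that the mixing of Reeb chord and boundary chord terms at the $E_1$ page indeed vanishes outside filtration level $0$.
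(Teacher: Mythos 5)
Your filtration by winding through $\partial \Omega$ does not do the job, for two related reasons. First, $F_0$ is strictly larger than the image of $\iota$: a word such as $c_{\omega j}c_{i\omega}$, where $i,j \in \partial \Lambda$ and $\omega \in \partial \Omega$, lies in $[\Lambda,\Lambda]$ (only the outermost vertices of a word are required to lie on $\Lambda$) and has zero interior passes through $\partial \Omega$, yet its factors have endpoints on $\partial \Omega$ and so it does not belong to the stopped subalgebra, which excludes all chords with an endpoint on $\partial\Omega$. Second, and more seriously, the higher graded pieces are not acyclic. Take a boundary chord $a$ from $i$ to $j$ with $i,j\in\partial\Lambda$ whose interior meets exactly one point $\omega\in\partial\Omega$ and no point of $\partial\Lambda$ (this occurs already for the unknot with one strand of $\Omega$ attached). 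Then $a$ has winding $1$, its only $\partial_0$-term is the cut $c_{\omega j}c_{i\omega}$ of winding $0$, and nothing winding-preserving maps onto $a$ (the $\partial_{-1}$-type contributions involve a chord going once around a boundary circle containing $\omega$ and hence drop winding, and by the subcomplex lemma the differentials of chords of $\Lambda$ never output chords meeting $\partial\Omega$). So $[a]$ is a nonzero class in $F_1/F_0$, and the spectral sequence does not collapse in the way you assert: the cancellation that makes the lemma true is exactly the pair $a \leftrightarrow c_{\omega j}c_{i\omega}$, and your filtration separates these two terms into different levels, so no argument can establish acyclicity of $F_k/F_{k-1}$ for $k\ge 1$.

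The paper filters by Reeb chord action instead. The action-preserving part of the differential vanishes on chords of $\Lambda$ and equals $\partial_0$ of \autoref{res:zero-dim-diff} on boundary chords; in particular the problematic cancellation above is action-preserving and therefore visible already on the associated graded, where \autoref{res:short-chords-generate} applies. Your observation that an unconcatable word of short chords whose outer vertices lie on $\partial\Lambda$ cannot involve $\partial\Omega$ (one-point stops force concatenability) is correct and is indeed the key combinatorial point, but it must be fed into the action filtration: one shows that the inclusion is a quasi-isomorphism onto $[\Lambda,\Lambda]$ for the action-preserving differentials, and then passes to the mapping cone of $\iota$ and the spectral sequence of the action filtration to conclude. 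If you want to salvage your write-up, replace the winding filtration by the action filtration and rerun your third paragraph there; as written, the argument fails.
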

\begin{remark}
	Note that this lemma says 
	something different than \autoref{res:sing-cobordism-map}, the reason being 
	that $V_{0,\partial\Omega}$ is a Weinstein hypersector and not a hypersurface.
\end{remark}
\begin{proof}
	We will use an argument similar to that in the proof of
	\autoref{res:surgery-map-surface}.
	We filter $CE^{*}( \Lambda \cup \Omega;V_{0};\R^{4} )$ by Reeb
	chord action and let $\widetilde{\partial}$ be the action preserving
	component of the differential $\partial$. Since $\partial$ is strictly
	action decreasing on
	chords of $\Lambda$, $\widetilde{\partial}$ will act as
	$\partial_{0}$ from \autoref{res:zero-dim-diff} on chords of $\partial
	\Lambda \cup \partial \Omega$ and vanish on chords of $\Lambda \cup
	\Omega$. Let 
\[
	\tilde{\iota}:(CE^{*}( \Lambda;V_{0,\partial 
	\Omega};\R^{4} ),\widetilde{\partial}) \to 
	(CE^{*}( \Lambda \cup \Omega;V_{0};\R^{4}),\widetilde{\partial}) 
\] 
	be the inclusion considered as going between the same algebras as
	$\iota$ but with $\widetilde{\partial}$ as differential.
	It follows from \autoref{res:short-chords-generate} that the inclusion
	\[
		(CE^{*}( \partial \Lambda;V_{0,\partial \Omega}
		),\partial_{0} )
		\hookrightarrow
		(CE^{*}( \partial \Lambda \cup \partial \Omega;V_{0}
		),\partial_{0})
	\]
	is a quasi-isomorphism onto 
	$(CE^{*}( \partial \Lambda \cup \partial \Omega;V_{0}
	),\partial_{0})[\partial \Lambda,\partial \Lambda]$. 
	Since $\tilde{\iota}$ simply extends this inclusion by the identity on
	the remaining generators, it follows that
	the whole inclusion $\tilde{\iota}$ is a
	quasi-isomorphism onto $(CE^{*}( \Lambda \cup \Omega;V_{0};\R^{4}
	)[\Lambda,\Lambda],\widetilde{\partial})$. If we
	consider the mapping cone of $\iota$ we see that the
	first page of the spectral sequence arising from the action filtration
	of the cone is isomorphic to the mapping cone of
	$\tilde{\iota}$. Consequently, the sequence vanishes on the second
	page and $\iota$ is a quasi-isomorphism
	onto $CE^{*}( \Lambda \cup \Omega;V_{0};\R^{4}
	)[\Lambda,\Lambda]$. 
\end{proof}
\begin{lemma}
\label{res:stopped-lemma}
	There is a quasi-isomorphism
	\[
		CE^{*}( \Lambda \cup \Omega;V_{0};\R^{4} ) \to
		CE^{*}( \Lambda;V_{0};\R^{4} ),
	\] 
	and moreover, the restriction of this 
	quasi-isomorphism to $CE^{*}( \Lambda \cup \Omega;V_{0};\R^{4}
	)[\Lambda,\Lambda]$ is also a quasi-isomorphism.
\end{lemma}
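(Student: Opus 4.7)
The plan is to show that the natural projection
\[
	\varphi: CE^{*}( \Lambda \cup \Omega;V_{0};\R^{4} ) \longrightarrow CE^{*}( \Lambda;V_{0};\R^{4} ),
\]
which sends every idempotent $e_{\Omega_i}$ and every generator with source or target on some $\partial \Omega_i$ to zero, is a quasi-isomorphism, by verifying that its kernel is acyclic. That $\varphi$ is a well-defined dg-algebra morphism is straightforward: differentials of the killed generators lie in the kernel ideal (for the $\partial\Lambda\cup\partial\Omega$-chords this is immediate from \autoref{res:zero-dim-diff}, and for the chords of $\Lambda$ it can be read off the Lagrangian projection via the Ng resolution).

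The acyclicity of the kernel $I$ is obtained by a variant of the exact removal argument of \autoref{res:exact-removal}. For each one-point stop $\partial \Omega_i$, the short Reeb chord $a_i = c_{\Omega_i\Omega_i}^{1}$ in $\partial V_0$ wraps once around $\partial \Omega_i$ and bounds the half-handle disk of $\Omega_i$, so by \autoref{res:zero-dim-diff} one has $\partial a_i = e_{\Omega_i}$ exactly. This equation is the key input for contracting $I$. The hypothesis of \autoref{res:exact-removal} is not met verbatim, since $a_i$ also appears in the $\partial_0$-differentials of the higher-winding chords $c_{\Omega_i\Omega_i}^{p}$ for $p\geq 2$, but all such chords lie in $I$, and the argument can be salvaged as follows. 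Filter $I$ by Reeb chord action, as in the proof of \autoref{res:stopped-subalgebra-prop}, so that on the $E_1$-page of the resulting spectral sequence the differential is $\partial_0$; by an analogue of \autoref{res:short-chords-generate}, this page is spanned by unconcatable words of short chords in $I$. Any short chord at $\partial\Omega_i$ other than $a_i$ (of the form $c^{0}_{\Omega_i,j}$ or $c^{0}_{j,\Omega_i}$) has its $\Omega_i$-endpoint adjacent to its neighbor in an unconcatable word, forcing a concatenation, so the only classes that survive are those represented by the $a_i$'s themselves (together with $[e_{\Omega_i}]$). The action-decreasing differential $\partial_{-1}$, which on these classes is exactly the relation $\partial a_i=e_{\Omega_i}$, then kills each of them, so the spectral sequence collapses on the $E_2$-page and $I$ is acyclic.

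The ``moreover'' claim follows from the fact that $\varphi$ respects the decomposition of words by source and target idempotents: its restriction to $CE^{*}( \Lambda \cup \Omega;V_{0};\R^{4} )[\Lambda,\Lambda]$ remains surjective onto $CE^{*}(\Lambda;V_{0};\R^{4})$, and its kernel there is precisely $I\cap CE^{*}( \Lambda \cup \Omega;V_{0};\R^{4} )[\Lambda,\Lambda]$, a direct summand of $I$ to which the same action-filtration/spectral-sequence argument applies. The main obstacle is the careful execution of this spectral sequence argument in the presence of the higher-winding chords, which precludes a direct application of \autoref{res:exact-removal}; one must combine the action filtration with the exact removal idea and identify precisely which short-chord unconcatable words on the $E_1$-page lie in $I$ (the content being that, at a one-point stop, the unique short $c_{ii}^{1}$-chord is $a_i$ and it bounds a disk) before confirming that $\partial_{-1}$ contracts each of them.
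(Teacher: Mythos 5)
Your argument does not go through as written, and the failure point is the claimed equation $\partial a_i = e_{\Omega_i}$ for $a_i = c^{1}_{\Omega_i\Omega_i}$ in the original configuration. The circle of $\partial V_0$ at a left singularity carries, besides the one-point stop $\partial\Omega_i$, all the points of $\partial\Lambda$ belonging to that singularity; the chord $c^{1}_{\Omega_i\Omega_i}$ traverses the whole circle and hence splits at each of these points, so it is not a short chord and $\partial_0(c^{1}_{\Omega_i\Omega_i})\neq 0$. A Reeb chord whose differential is exactly an idempotent (and which does not occur in other differentials) simply does not exist before a geometric modification: in the paper this is produced by a Legendrian isotopy (two applications of \autoref{res:push-twist}, see \autoref{fig:exact-stops-isotopy}) which introduces crossing chords $a_{ij}$ between the $\Omega$-strands with $\partial a_{kk}=\pm e_k$ on the nose; only then can \autoref{res:exact-removal} be applied, inductively strand by strand. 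This isotopy is the key ingredient your proposal is missing, and it also sidesteps the question — which you assert but do not verify — of whether the ``natural projection'' is a chain map at all on the crossing chords of $\Lambda$ and on the boundary chords that pass through $\partial\Omega_i$ without ending there (in $CE^{*}(\Lambda;V_0;\R^{4})$ these survive as chords of $\partial\Lambda$, so they cannot simply be treated as untouched generators without an identification and a comparison of disk counts).

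The spectral sequence computation of $H^{*}(I)$ is also incorrect at the $E_1$-stage. The classes surviving $\partial_0$ are represented by unconcatable words of short chords (interspersed with crossing chords), and among these the kernel contains many more than $[a_i]$ and $[e_{\Omega_i}]$: for instance the single-letter word $c^{0}_{\Omega_i j}$ (or $c^{0}_{j\Omega_i}$), which is short, unconcatable, closed and not exact for $\partial_0$, as well as any longer unconcatable word beginning or ending with such a chord. Your ``forcing a concatenation'' argument only applies when the $\Omega_i$-endpoint has a neighbor in the word whose endpoint is also $\partial\Omega_i$; it fails when the chord sits at the end of the word or when the adjacent chord meets the circle at a different marked point. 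Consequently the claimed collapse at $E_2$ via $\partial_{-1}$ (which vanishes on these extra classes) is unsubstantiated, and the acyclicity of $I$ is not established by your argument. If you want to see how these classes actually die, and how the ``moreover'' statement is obtained, follow the paper's route: after the isotopy and the repeated use of \autoref{res:exact-removal}, all words with an endpoint on $\Omega$ are sent to zero, and the complex splits as the direct sum of $CE^{*}(\Lambda\cup\Omega;V_0;\R^{4})[\Lambda,\Lambda]$ and the span of words with an endpoint on $\Omega$, which gives the statement about the restriction.
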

\begin{proof}
	First, we order the strands of $\Omega$ as
	$\Omega_{1},\ldots,\Omega_{k}$ going from top to bottom. 
	By applying \autoref{res:push-twist} twice we can perform an isotopy of 
	$\Lambda \cup \Omega$ to get a bordered Legendrian of the form shown
	in \autoref{fig:exact-stops-isotopy}. 
\begin{figure}[!htb]
    \centering
    
    \import{./figures/}{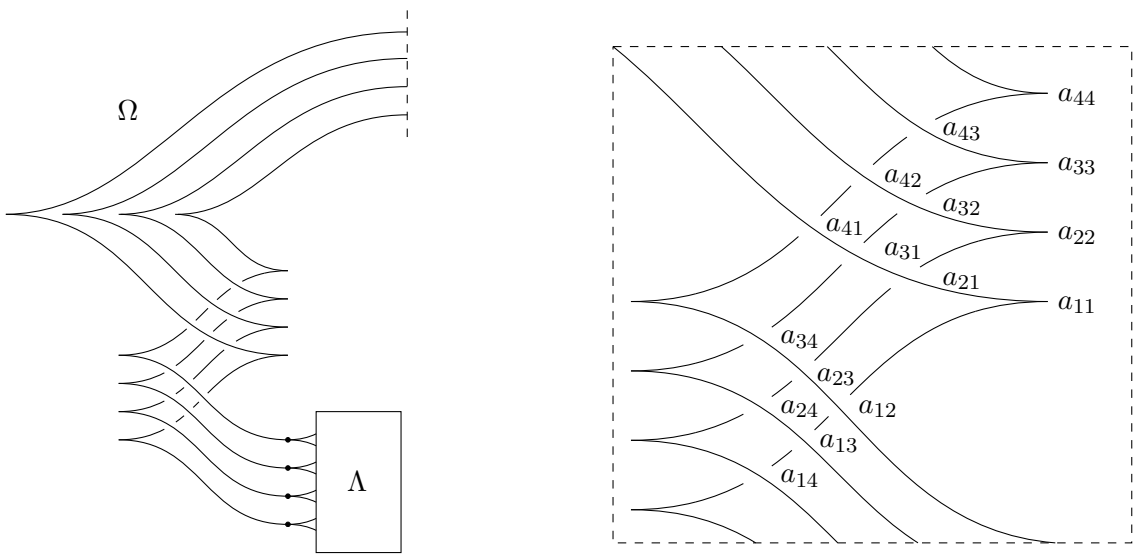}

    \caption{The bordered Legendrian $\Lambda \cup \Omega$ after a Legendrian
	    isotopy which introduces Reeb chords from $\Omega$ to itself.}
    \label{fig:exact-stops-isotopy}
\end{figure}
	This isotopy introduces one Reeb chord
	generator $a_{ij}$ from $\Omega_{i}$ to $\Omega_{j}$ 
	for each pair $1 \leq i,j \leq k$. The differential
	acts on these by $\partial a_{ij} = \pm \delta_{ij}e_{i} + 
	\sum_{m > i} \pm a_{mj}a_{im}$, where $e_{i}$ is the idempotent
	corresponding to $\Omega_{i}$. In particular, we have 
	$\partial a_{kk} = \pm e_{k}$. Moreover, $a_{kk}$ does
	not occur in the differential of any other chords. 
	By \autoref{res:exact-removal}, there is then a quasi-isomorphism 
	\[
		CE^{*}( \Lambda \cup \Omega;V_{0};\R^{4} ) \isomto
		CE^{*}( \Lambda \cup \Omega_{1} \cup \ldots \cup
		\Omega_{k-1};V_{0};\R^{4} ).
	\]
	By induction we then get the desired quasi-isomorphism 
	$CE^{*}( \Lambda \cup \Omega;V_{0};\R^{4} ) \isomto
	CE^{*}( \Lambda;V_{0};\R^{4} )$. 
	By construction (see the proof of
	\autoref{res:exact-removal}) all words with an endpoint on $\Omega$
	are sent to zero. As a chain complex,
	$CE^{*}( \Lambda \cup \Omega;V_{0};\R^{4} )$ splits into a direct sum of
	the subcomplex $CE^{*}( \Lambda \cup \Omega;V_{0};\R^{4}
	)[\Lambda,\Lambda]$ and the subcomplex of words with an endpoint on
	$\Omega$. This implies that the restriction to 
	$CE^{*}( \Lambda \cup \Omega;V_{0};\R^{4})[\Lambda,\Lambda]$ is also a
	quasi-isomorphism.
\end{proof}

Combining these results, we get that the stopped subalgebra is quasi-isomorphic
to the Chekanov--Eliashberg dg-algebra of $\Lambda$.

\begin{theorem}
\label{res:finite-stopped-models}
	Let $\Lambda \subset \R^{3}$ be a singular Legendrian. 
	Then the canonical inclusion 
	\[
		CE^{*}( \Lambda;V_{0,\partial \Omega};\R^{4})
		\hookrightarrow
		CE^{*}( \Lambda;V_{0};\R^{4} )
	\] 
	is a quasi-isomorphism.
\end{theorem}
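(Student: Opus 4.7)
The plan is to realize $\iota$ as one factor in a factorization $\iota_{1} = j \circ \iota$, where both $\iota_{1}$ and $j$ are (or will be shown to be) quasi-isomorphisms. By \autoref{res:invariance} I may first isotope $\Lambda$ to a position with all singularities to the sides, so that $\Omega$ is defined. Let
\[
	j : CE^{*}( \Lambda;V_{0};\R^{4}) \hookrightarrow CE^{*}( \Lambda \cup \Omega;V_{0};\R^{4})
\]
be the canonical inclusion, whose image lies in the subalgebra $CE^{*}( \Lambda \cup \Omega;V_{0};\R^{4})[\Lambda,\Lambda]$ since every chord in its source has both endpoints on $\Lambda$. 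Writing $\iota_{1}$ for the inclusion of \autoref{res:stopped-subalgebra-prop}, inspection on generators shows that $\iota_{1} = j \circ \iota$.

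By \autoref{res:stopped-subalgebra-prop}, $\iota_{1}$ is a quasi-isomorphism, so it suffices to prove that $j$ is one: then $H^{*}(\iota) = H^{*}(j)^{-1} \circ H^{*}(\iota_{1})$ is an isomorphism. To show that $j$ is a quasi-isomorphism, I would let $\Psi$ be the quasi-isomorphism of \autoref{res:stopped-lemma}, whose restriction $\Psi|_{[\Lambda,\Lambda]}$ is also a quasi-isomorphism, and argue that $\Psi|_{[\Lambda,\Lambda]} \circ j$ is the identity on $CE^{*}(\Lambda;V_{0};\R^{4})$. Unpacking the construction of $\Psi$, it is the composition of an invariance map for a Legendrian isotopy supported near $\Omega$ with the successive projections from \autoref{res:exact-removal} killing only the auxiliary chords $a_{ij}$. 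Neither step affects a generator of $CE^{*}( \Lambda;V_{0};\R^{4})$, so $j$ becomes a one-sided inverse to the quasi-isomorphism $\Psi|_{[\Lambda,\Lambda]}$, and hence is itself a quasi-isomorphism.

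The main obstacle is the careful verification that the invariance map of \autoref{res:invariance}, applied to the isotopy of \autoref{fig:exact-stops-isotopy}, acts as the identity on the image of $j$. Intuitively this is clear because the isotopy is compactly supported in a neighborhood of $\Omega$ and no generator in the image of $j$ has an endpoint there, but making it precise requires checking that the moduli spaces of pseudo-holomorphic disks defining the invariance map do not interact with the support of the isotopy. Once this standard local-to-global argument is in place, the theorem follows at once from the identity $\iota_{1} = j \circ \iota$ and two-out-of-three.
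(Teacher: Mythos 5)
There is a genuine gap: the map $j$ on which your whole argument rests does not exist as a morphism of dg-algebras. The generator-wise inclusion $CE^{*}( \Lambda;V_{0};\R^{4}) \to CE^{*}( \Lambda \cup \Omega;V_{0};\R^{4})$ fails to be a chain map, because attaching $\Omega$ adds the stop points $\partial \Omega$ to $\partial V_{0}$ and thereby changes the internal differential $\partial_{0}$ of \autoref{res:zero-dim-diff}: a chord of $\partial \Lambda$ whose arc passes through a point $\omega \in \partial \Omega$ (for instance any $c_{ij}^{p}$ with $p \geq 1$, once the base points are placed near $\partial \Omega$) acquires in the larger algebra extra splitting terms of the form $c_{\omega j}c_{i\omega}$, which involve chords with an endpoint on $\partial \Omega$ and have no counterpart in $CE^{*}( \Lambda;V_{0};\R^{4})$. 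So $j(\partial c) \neq \partial j(c)$; equivalently, the putative image of $j$ is not a subcomplex, and it is strictly smaller than $CE^{*}( \Lambda \cup \Omega;V_{0};\R^{4})[\Lambda,\Lambda]$, which also contains words passing through the $\Omega$-vertices in their interior. Consequently the factorization $\iota_{1} = j \circ \iota$ through a quasi-isomorphism $j$, the identity $\Psi|_{[\Lambda,\Lambda]} \circ j = \mathrm{id}$, and the two-out-of-three step all break down before the (secondary) issue you flag about the invariance map even arises.

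The paper's proof needs no map \emph{into} the enlarged algebra: it corestricts the inclusion of \autoref{res:stopped-subalgebra-prop} to a quasi-isomorphism $CE^{*}( \Lambda;V_{0,\partial \Omega};\R^{4}) \isomto CE^{*}( \Lambda \cup \Omega;V_{0};\R^{4})[\Lambda,\Lambda]$, composes it with the restriction to $[\Lambda,\Lambda]$ of the quasi-isomorphism of \autoref{res:stopped-lemma} (which goes \emph{out of} the enlarged algebra, to $CE^{*}( \Lambda;V_{0};\R^{4})$), and observes that on generators this composite is exactly the canonical inclusion $\iota$. If you want to keep your two-out-of-three strategy you would have to replace $j$ by a genuinely well-defined comparison map, e.g.\ one sending a chord of $\partial \Lambda$ to the product of its pieces split at $\partial \Omega$, and then re-verify compatibility with the differential and with $\Psi$; but at that point you have essentially reconstructed the paper's composite rather than simplified it.
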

\begin{proof}
	Composing the maps \autoref{res:stopped-subalgebra-prop} and
	\autoref{res:stopped-lemma} we get a quasi-isomorphism,
	\[
		CE^{*}( \Lambda;V_{0,\partial \Omega};\R^{4})
		\isomto
		CE^{*}( \Lambda \cup \Omega;V_{0};\R^{4} )[\Lambda,\Lambda]
		\isomto
		CE^{*}( \Lambda;V_{0};\R^{4} ),
	\]
	and it is clear from the construction of these maps that 
	the composition is the canonical inclusion.
\end{proof}
\begin{remark}
\label{rm:omega-subset}
	To simplify the exposition, we have only considered the case when we
	attach one strand $\Omega_i$ at each left singularity. However, one can
	equally well attach them only at a subset of the left singularities.
	If one does this and lets $\Omega$ be the union of the strands attached
	at this subset, one can in the same way define the stopped subalgebra 
	by excluding the chords of $\partial \Lambda$ passing though the $\partial \Omega$.
	The results \autoref{res:stopped-subalgebra-prop}, \autoref{res:stopped-lemma}, and
	\autoref{res:finite-stopped-models} can also be applied in this more
	general setting, and the proofs are
	the same.
\end{remark}

\subsection{Bordered Legendrians from opening up singular Legendrians}
The stopped subalgebra can be realized as the Chekanov--Eliashberg dg-algebra of 
a bordered Legendrian. This allows us to exploit the isotopy invariance and
further simplify the algebra.
\begin{definition}
\label{def:opening}
Let $\Lambda \subset \R^{3}$ be a singular Legendrian 
with all singularities to the sides. 
The \emph{opening} of $\Lambda$ is the
smooth bordered Legendrian $\Lambda^{\circ}$ obtained by removing the singularities of 
$\Lambda$ and separating the strands, as shown in 
\autoref{fig:singular-resolution}.
Given a left or right singularity $t$ of
$\Lambda$, we define $\Lambda^{\circ,t}$ to be the (possibly singular) bordered
Legendrian obtained by opening only at $t$.
\begin{figure}[!htb]
    \centering
    
    \import{./figures/}{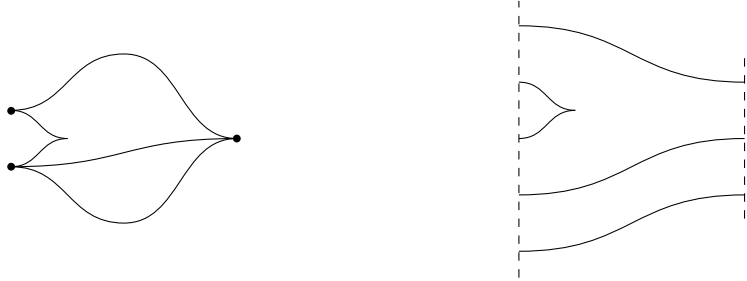}

    \caption{The front projection of a singular Legendrian $\Lambda$ and its
    opening $\Lambda^{\circ}$.}
    \label{fig:opening}
\end{figure}
\end{definition}

\begin{definition}
\label{def:resolution}
Let $\Lambda \subset \R^{3}$ be a singular Legendrian 
with all singularities
to the sides. The \emph{resolution} of $\Lambda$ is the
smooth bordered Legendrian $\Lambda^{\bullet}$ constructed as shown in
\autoref{fig:singular-resolution}. It is obtained from the opening
$\Lambda^{\circ}$ by performing a negative half-twist of the ends corresponding
to each left singularity of $\Lambda$, and a positive half twist followed by a
negative half-twist of the ends corresponding to each right singularity, in
such a way that no new chords are introduced between ends corresponding to different
singularities. We are free to choose the order of the twists, as well as
whether they go up or down. Given a left or right singularity $t$ of
$\Lambda$ we define $\Lambda^{\bullet,t}$ to be the (possibly singular) 
bordered Legendrian obtained by resolving only at $t$.
\begin{figure}[!htb]
    \centering
    
    \import{./figures/}{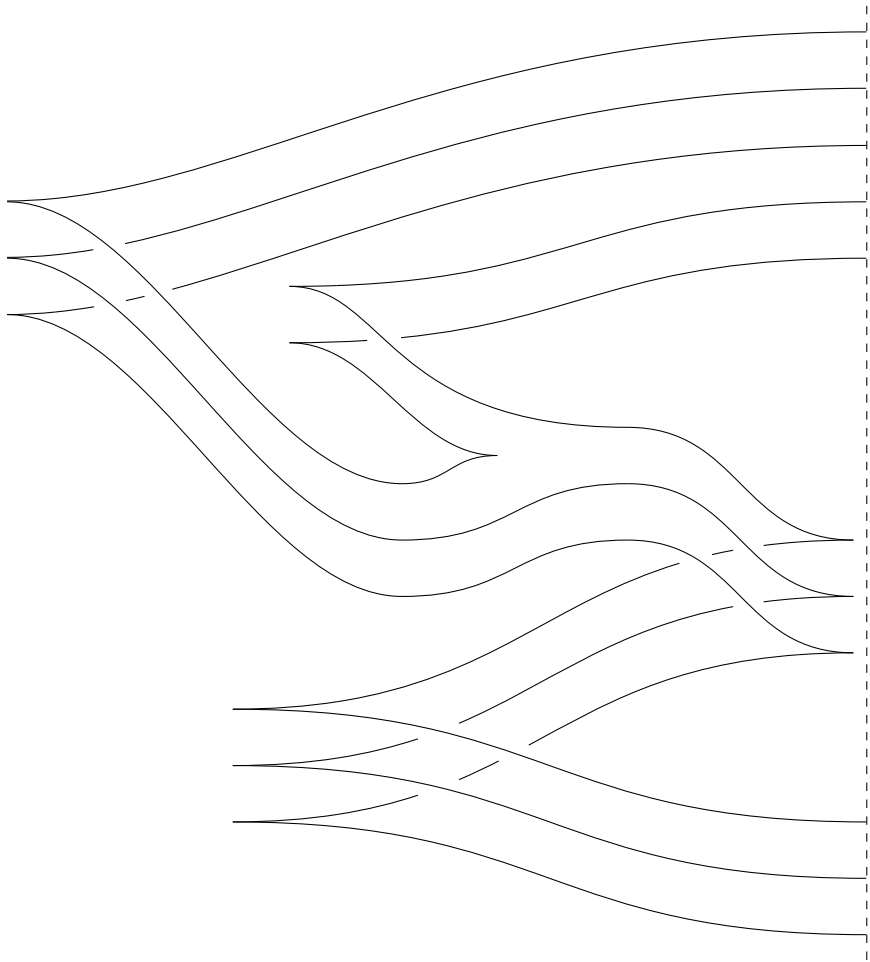}

    \caption{The front projection of the resolution $\Lambda^{\bullet}$ of the
	    Legendrian $\Lambda$ in
    \autoref{fig:opening}.}
    \label{fig:singular-resolution}
\end{figure}
\end{definition}
\begin{remark}
	Note that due to the choices involved in the definition, the 
	resolution $\Lambda^{\bullet}$ is not canonically defined up
	to Legendrian isotopy. However, for any two different choices of order
	and direction of the twists in the construction of $\Lambda^{\bullet}$,
	there is a canonical identification of the
	respective Reeb chords. The Chekanov--Eliashberg dg-algebra 
	$CE^{*}( \Lambda^{\bullet};\R^{4} )$ is thus well-defined up to
	isomorphism.
\end{remark}
The idea of the following theorem is the easily verified fact that, in the case
when the singularities are stopped, one can replace the finitely many internal
generators by Reeb chords introduced by removing the singularities and wrapping
the strands.
\begin{theorem}
\label{res:resolution}
	Let $\Lambda \subset \R^{3}$ be a singular Legendrian 
	with all singularities to the sides. Then there is a
	quasi--isomorphism 
	\[
		CE^{*}( \Lambda;V_{0};\R^{4} ) \cong 
		CE^{*}( \Lambda^{\bullet};\R^{4} ).
	\]
	Moreover, if $t$ is an arbitrary singularity of $\Lambda$ there is a
	quasi-isomorphism
	\[
		CE^{*}( \Lambda;V_{0};\R^{4} ) \cong 
		CE^{*}( \Lambda^{\bullet,t};V^{\bullet,t}_{0};\R^{4} ).
	\]
\end{theorem}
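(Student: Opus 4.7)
The plan is to deduce both statements from \autoref{res:finite-stopped-models} by identifying the finite stopped subalgebra with the bordered Chekanov--Eliashberg algebra of $\Lambda^{\bullet}$ via the isotopy invariance of bordered Legendrians.

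First I would treat the case in which all singularities of $\Lambda$ are left singularities. Attach one strand $\Omega_i$ at each left singularity of $\Lambda$, as in \autoref{fig:exact-stops}. By \autoref{res:finite-stopped-models}, the canonical inclusion
\[
    CE^{*}( \Lambda;V_{0,\partial \Omega};\R^{4}) \hookrightarrow CE^{*}( \Lambda;V_{0};\R^{4} )
\]
is a quasi-isomorphism. By construction, the stopped subalgebra on the left is precisely the bordered Chekanov--Eliashberg algebra of the bordered Legendrian obtained from $\Lambda \cup \Omega$ by regarding the points $\partial \Omega$ as borders. I would then produce a compactly supported Legendrian isotopy carrying this bordered Legendrian to $\Lambda^{\bullet}$: at each singularity, the local picture consisting of the singularity together with its outgoing strand $\Omega_i$ is isotopic, via \autoref{res:push-twist} and the singular Reidemeister moves of \autoref{fig:sing-legendrian-reidemeister}, to the negative half-twist prescribed by \autoref{def:resolution}. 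Invariance of the bordered Chekanov--Eliashberg algebra under such isotopies (\cite[Theorem 3.3.15]{ABS22}) then yields the quasi-isomorphism.

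For the general case with right singularities, I would reduce to the previous case by first performing a preliminary Legendrian isotopy that drags each right singularity across the diagram to become a left singularity using the moves of \autoref{fig:sing-legendrian-reidemeister}. This isotopy introduces an additional positive half-twist near the relocated singularity; when combined with the negative half-twist coming from the subsequent stopped-subalgebra construction, this reproduces exactly the positive-then-negative half-twist assigned to right singularities in \autoref{def:resolution}. The single-singularity statement is proved identically, attaching only a single strand $\Omega$ at $t$ as permitted by \autoref{rm:omega-subset}, so that only the singularity at $t$ is resolved and one obtains $\Lambda^{\bullet,t}$.

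The main obstacle is the careful verification of these local Legendrian isotopies and the check that the identification is insensitive to the choices declared immaterial in \autoref{def:resolution}—the order in which the half-twists are performed and whether they are placed above or below the diagram. These independence statements should reduce to finite sequences of Reidemeister moves from \autoref{fig:sing-legendrian-reidemeister}, but in doing so one must also confirm that the isotopies at distinct singularities can be arranged to have disjoint support, so that no spurious Reeb chords are created between strands attached to different singularities. This bookkeeping is the most delicate part of the argument.
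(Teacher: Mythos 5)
Your overall skeleton (reduce to left singularities, attach $\Omega$, invoke \autoref{res:finite-stopped-models}, use \autoref{rm:omega-subset} for the single-singularity case) matches the paper, but the central identification step has a genuine gap. You claim that the stopped subalgebra is the bordered Chekanov--Eliashberg algebra of the bordered Legendrian $\Lambda \cup \Omega$ and that a compactly supported Legendrian isotopy carries this bordered Legendrian to $\Lambda^{\bullet}$, so that invariance of the bordered algebra finishes the proof. No such isotopy exists: $\Lambda \cup \Omega$ is a \emph{singular} bordered Legendrian (each left singularity of $\Lambda$ persists, now with the extra strand $\Omega_i$ emanating from it), while $\Lambda^{\bullet}$ is smooth by \autoref{def:resolution}. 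A Legendrian (ambient contact) isotopy cannot remove a $0$-cell of the skeleton; eliminating a singularity is a Weinstein move (handle contraction), and precisely such moves do \emph{not} preserve the quasi-isomorphism type of the Chekanov--Eliashberg algebra (see \autoref{ssec:isotopy} and \autoref{res:weinstein-isotopy-cor}). So \cite[Theorem 3.3.15]{ABS22} cannot be applied as you propose. There is also a secondary mismatch: the bordered algebra of the singular bordered Legendrian $\Lambda\cup\Omega$ contains \emph{all} boundary chords at its singularities (including those with endpoints on, or passing through, $\partial\Omega$), hence is infinitely generated there, whereas the stopped subalgebra excludes exactly those chords; the two are not ``precisely'' equal, so even the first identification needs an argument.

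What is actually needed at this point---and what the paper does---is a direct combinatorial dg-algebra isomorphism $CE^{*}(\Lambda;V_{0,\partial\Omega};\R^{4}) \cong CE^{*}(\Lambda^{\bullet};\R^{4})$, with no isotopy involved: at a left singularity of valency $n$ the surviving boundary chords are the $t_{ij}^{0}$, $1\le i<j\le n$, with $\partial t_{ij}^{0}=\sum_{i<k<j}\pm\, t_{kj}^{0}t_{ik}^{0}$, and these correspond bijectively to the crossings $a_{ij}$ of the negative half-twist in $\Lambda^{\bullet}$, whose differentials have the same quadratic form; one then checks via the Ng resolution that the disk counts away from the singularities agree, so the identification of generators is a dg-isomorphism. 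Your reduction of right singularities to left ones by a positive half-twist is fine and is essentially the paper's first step (one should arrange $(\Lambda')^{\bullet}=\Lambda^{\bullet}$), and your concern about the choices in \autoref{def:resolution} is handled by the canonical identification of Reeb chords between different models of $\Lambda^{\bullet}$; but without the explicit algebra comparison above, the bridge from the stopped subalgebra to $CE^{*}(\Lambda^{\bullet};\R^{4})$ is missing.
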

\begin{proof}
	We prove the first quasi-isomorphism; the proof of the second is
	similar in light of \autoref{rm:omega-subset}.
	By performing a positive half twist at each right singularity of
	$\Lambda$, using Reidemeister VI moves, 
	we can obtain a Legendrian $\Lambda'$ with only left singularities such
	that $(\Lambda')^{\bullet} = \Lambda^{\bullet}$. Since
	$\Lambda$ and $\Lambda'$ are Legendrian isotopic by construction, 
	we get a quasi-isomorphism
	\[
		CE^{*}( \Lambda;V_{0};\R^{4} ) \cong 
		CE^{*}( \Lambda';V_{0};\R^{4} ).
	\]	
	It is thus sufficient to prove the theorem assuming that 
	$\Lambda$ only has left singularities.

	For each left singularity $t$ of valency $n$ of $\Lambda$ the stopped
	subalgebra $CE^{*}(\Lambda;V_{0,\partial \Omega};\R^{4})$ has one Reeb
	chord generator $t_{ij}^{0}$ for each $1 \leq i < j \leq n$, labeling
	counter-clockwise from a base point placed to the left.  The
	differential acts by
	\[
		\partial t_{ij}^{0} = \sum_{i < k < j} \pm t_{kj}^{0}t_{ik}^{0}.
	\] 
	Looking at the corresponding negative half-twist in
	$\Lambda^{\bullet}$ one sees that there is 
	one Reeb chord generator $a_{ij}$ for each $1 \leq i < j \leq n$, using
	the same labeling of the strands as for the points of $\partial
	\Lambda$. The differential applied to these chords are given by 
	 \[
		\partial a_{ij}= \sum_{i < k < j} \pm a_{ik}a_{kj},
	\]
	so the canonical identification of generators, mapping $a_{ij}$ to 
	$t_{ij}^{0}$ is a unital isomorphism of algebras. Using the Ng
	resolution to compute the differential it is clear that this is
	compatible with the differential, and we thus have a dg-algebra
	isomorphism 
	\[
		CE^{*}(\Lambda;V_{0,\partial \Omega};\R^{4}) \cong 
		CE^{*}( \Lambda^{\bullet};\R^{4} ).
	\] 
	The result now follows from
	\autoref{res:finite-stopped-models}.
\end{proof}
\begin{remark} 
	Any singular Legendrian can be moved by a Legendrian isotopy into a position with all
	singularities to the sides, so \autoref{res:resolution} produces finitely
	generated models for all singular Legendrians in $\R^{3}$. The algebra 
	$CE^{*}( \Lambda^{\bullet};\R^{4})$ will depend up to
	quasi-isomorphism on the choice of isotopy.
\end{remark}
\begin{theorem}
\label{res:opening}
	Let $\Lambda \subset \R^{3}$ be a singular Legendrian and let $t$ be
	an arbitrary right singularity of $\Lambda$. Then there is an isotopy
	\[
		\Lambda^{\bullet,t} \sim \Lambda^{\circ,t}
	\] 
	and in particular, there are then quasi-isomorphisms
	\[
		CE^{*}( \Lambda;V_{0};\R^{4} ) \cong 
		CE^{*}(\Lambda^{\bullet,t};\R^{4}) \cong
		CE^{*}(\Lambda^{\circ,t};\R^{4}).
	\]
\end{theorem}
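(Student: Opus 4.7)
The plan is to exhibit the Legendrian isotopy $\Lambda^{\bullet,t} \sim \Lambda^{\circ,t}$ directly in the front projection, and then chain together the previously established quasi-isomorphisms. The key observation is that, by \autoref{def:resolution}, resolving at a right singularity $t$ produces a positive half-twist followed immediately by a negative half-twist of the strands emerging from that side, whereas $\Lambda^{\circ,t}$ simply has those strands separated with no twisting. Since a positive half-twist and a negative half-twist of the same collection of strands are inverse braid-theoretic operations, their composition is isotopic to the identity.

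First, I would focus attention on the local picture near $t$; outside a neighborhood of $t$ the two bordered Legendrians $\Lambda^{\bullet,t}$ and $\Lambda^{\circ,t}$ agree identically, so it suffices to produce the isotopy inside this neighborhood. Inside this neighborhood, $\Lambda^{\bullet,t}$ consists of the $n$ strands coming out of the opened singularity passing through a positive half-twist and then a negative half-twist, while $\Lambda^{\circ,t}$ consists of the same $n$ strands with no twisting. I would cancel the two half-twists by repeatedly applying Legendrian Reidemeister moves of type II (and planar isotopies), untangling the $\binom{n}{2}$ pairs of crossings one by one from the outermost pair inward. Since the half-twists are adjacent and inverse to each other, no obstruction to cancellation arises, and the resulting isotopy is supported in an arbitrarily small compact region near $t$.

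Having established the isotopy, I invoke the isotopy invariance of the bordered Chekanov--Eliashberg algebra \cite[Theorem 3.3.15]{ABS22}, which was recalled in the excerpt, to conclude
\[
	CE^{*}(\Lambda^{\bullet,t};\R^{4}) \cong CE^{*}(\Lambda^{\circ,t};\R^{4}).
\]
The first quasi-isomorphism $CE^{*}(\Lambda;V_{0};\R^{4}) \cong CE^{*}(\Lambda^{\bullet,t};V^{\bullet,t}_{0};\R^{4})$ is exactly the second statement of \autoref{res:resolution}. Combining these gives the desired chain of quasi-isomorphisms.

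The main obstacle is really just bookkeeping: making sure the cancellation by Reidemeister II moves can be carried out in the bordered setting while keeping the rest of the Legendrian fixed, and making sure that the orientations and choices of over/under strands in \autoref{def:resolution} indeed make the two half-twists genuinely inverse to each other rather than shifted with respect to one another. Since \autoref{def:resolution} explicitly allows the ends of the twists to go above or below the Legendrian without affecting the quasi-isomorphism type of $CE^{*}(\Lambda^{\bullet,t};\R^{4})$, one has enough flexibility to align the two half-twists so that the cancellation by Reidemeister II moves is manifest.
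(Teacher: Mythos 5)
Your reduction of the algebraic statement to the isotopy is fine (isotopy plus the second part of \autoref{res:resolution} plus bordered invariance is exactly how the paper concludes), but the isotopy itself is where your argument breaks down. You read \autoref{def:resolution} as placing a positive and a negative half-twist adjacently in a small box next to $t$, so that they cancel by Reidemeister II moves in an arbitrarily small neighbourhood of $t$. That is not the geometry of the resolution: the twists at a singularity are arranged with their ends passing above or below the rest of the Legendrian (this is why the definition speaks of the twists going ``up or down'' and of avoiding new chords between ends of different singularities), and the positive half-twist at a right singularity is the trace of the Reidemeister VI move that converts it into a left singularity; in the front it is realized with cusps (cf.\ ``the cusp of a half-twist'' in \autoref{res:push-twist}), not as a mirror-image triangle of $\binom{n}{2}$ crossings. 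So there is no local picture of two inverse crossing-triangles to untangle pairwise by Reidemeister II, and no isotopy supported near $t$. A decisive sanity check: if your local cancellation existed, it could be performed simultaneously and independently at every right singularity, giving $\Lambda^{\bullet}\sim\Lambda^{\circ}$ whenever all singularities are right singularities; the paper explicitly warns immediately after \autoref{res:opening} that the statement fails if one opens at more than one right singularity, because the isotopy cannot then be performed.

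The actual isotopy is global: one applies \autoref{res:push-twist} with $\Gamma=\Lambda^{\circ,t}$, pushing the \emph{entire} remaining tangle through the cusps of one half-twist (turning it into its reflection ${}^{*}\Gamma$ on the other side), and then applies \autoref{res:push-twist} once more with $\Gamma={}^{*}(\Lambda^{\circ,t})$ to absorb the second half-twist, recovering $\Lambda^{\circ,t}$; compare \autoref{fig:exact-stops} and \autoref{fig:exact-stops-isotopy}. This moves all of $\Lambda$, not just a neighbourhood of $t$ (it is of course still compactly supported in the bordered sense, which is what the invariance theorem of An--Bae--Su requires), and it is precisely this global nature that restricts the theorem to opening at a single right singularity. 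To repair your write-up you would need to replace the local Reidemeister II cancellation with this two-fold application of \autoref{res:push-twist}, or give an independent argument for an isotopy that genuinely exists.
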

\begin{proof}
	The resolution $\Lambda^{\bullet,t}$ is obtained from
	$\Lambda^{\circ,t}$ by performing two half-twists. By applying 
	\autoref{res:push-twist} (with $\Gamma = \Lambda^{\circ,t}$) we
	get an isotopy $\Lambda^{\bullet,t} \sim (^{*}\Lambda)^{\bullet,t}$. Doing
	this again (with $\Gamma = {}^{*} (\Lambda^{\circ,t} )$) we obtain
	an isotopy $(^{*}\Lambda)^{\bullet,t} \sim \Lambda^{\circ,t}$.
	Compare \autoref{fig:exact-stops} and \autoref{fig:exact-stops-isotopy}.
	The result then follows by \autoref{res:resolution}.
\end{proof}
It is important to note that the above theorem is not true if one opens at more
than one right singularity, as it is then not possible to perform the isotopy.

\section{Examples and computations}
\label{sec:examples}
We here give a number of examples of how the results in the previous sections can be used to
compute the cohomology and minimal model of the Chekanov–Eliashberg dg-algebra. 

\begin{example}
\label{ex:unknot}
	Let $\Lambda$ be the standard Legendrian unknot with a single top
	handle, illustrated in \autoref{fig:unknot}.
\begin{figure}[!htb]
    \centering
    
    \import{./figures/}{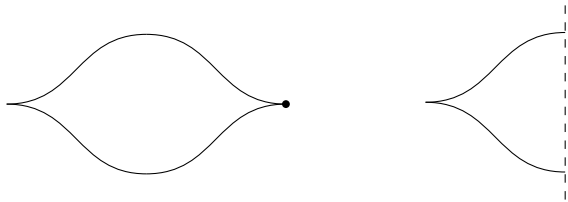}

    \caption{The front projection of the standard Legendrian unknot $\Lambda$
	    with a single top handle, and its opening $\Lambda^{\circ}$.}
    \label{fig:unknot}
\end{figure}
	The opening of $\Lambda$ lacks Reeb chords, so its Chekanov--Eliashberg
	dg-algebra is isomorphic to the ground field $\textbf{k}$.
	\autoref{res:opening} then says that there is a quasi-isomorphism
	$CE^{*}( \Lambda;V_{0};\R^{4} )\cong \textbf{k}$. This has been shown
	in \cite[Example 7.4]{AE21} using other methods. The result is expected
	in light of the surgery formula, as the corresponding co-core consists
	of a single section in $W_{V} \cong T^{*}\R^{2}$ whose wrapped Floer
	cohomology is generated by a single self-intersection point, see
	\cite[Section 1.1]{AE21}.
\end{example}

\subsection{Rainbow connected sums}
We here describe a way of connecting two singular Legendrians so that 
the Chekanov--Eliashberg dg-algebra of the resulting Legendrian is
quasi-isomorphic to the direct product of the Chekanov--Eliashberg dg-algebras of the
initial Legendrians.
\begin{definition}
Let $\Lambda ^{1}$ and $\Lambda^{2}$ be two unlinked singular Legendrians in
$\R^{3}$, such that $\Lambda^{1}$ and $\Lambda^{2}$
both have $k$ left singularities. The
\textit{rainbow connected sum} of $\Lambda^{1}$ and $\Lambda^{2}$ is the
Legendrian $\Lambda^{1} \# \Lambda^{2}$ obtained by connecting $\Lambda^{1}$ and
$\Lambda^{2}$ by $k$ handles $\Pi$ as in \autoref{fig:rainbow-sum}.
\begin{figure}[!hbt]
    \centering
    
\begingroup%
  \makeatletter%
  \providecommand\color[2][]{%
    \errmessage{(Inkscape) Color is used for the text in Inkscape, but the package 'color.sty' is not loaded}%
    \renewcommand\color[2][]{}%
  }%
  \providecommand\transparent[1]{%
    \errmessage{(Inkscape) Transparency is used (non-zero) for the text in Inkscape, but the package 'transparent.sty' is not loaded}%
    \renewcommand\transparent[1]{}%
  }%
  \providecommand\rotatebox[2]{#2}%
  \newcommand*\fsize{\dimexpr\f@size pt\relax}%
  \newcommand*\lineheight[1]{\fontsize{\fsize}{#1\fsize}\selectfont}%
  \ifx\svgwidth\undefined%
    \setlength{\unitlength}{186.16551437bp}%
    \ifx\svgscale\undefined%
      \relax%
    \else%
      \setlength{\unitlength}{\unitlength * \real{\svgscale}}%
    \fi%
  \else%
    \setlength{\unitlength}{\svgwidth}%
  \fi%
  \global\let\svgwidth\undefined%
  \global\let\svgscale\undefined%
  \makeatother%
  \begin{picture}(1,0.68732886)%
    \lineheight{1}%
    \setlength\tabcolsep{0pt}%
    \put(0,0){\includegraphics[width=\unitlength,page=1]{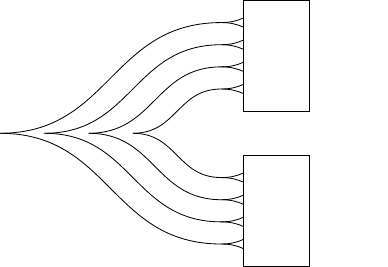}}%
    \put(0.69101119,0.52200372){\color[rgb]{0,0,0}\makebox(0,0)[lt]{\lineheight{1.25}\smash{\begin{tabular}[t]{l}$\Lambda^{1}$\end{tabular}}}}%
    \put(0.68885521,0.12230906){\color[rgb]{0,0,0}\makebox(0,0)[lt]{\lineheight{1.25}\smash{\begin{tabular}[t]{l}$\Lambda^{2}$\end{tabular}}}}%
    \put(0,0){\includegraphics[width=\unitlength,page=2]{rainbow-sum.pdf}}%
  \end{picture}%
\endgroup%

    \caption{The Rainbow connected sum of two singular Legendrians
    $\Lambda^{1}$ and $\Lambda^{2}$ in the front, with $k=4$.}
    \label{fig:rainbow-sum}
\end{figure}
\end{definition}
\begin{remark}
\label{rm:rainbow-one}
In particular, if one connects two unlinked Legendrians $\Lambda^{1}$ and
$\Lambda^{2}$ with only one
handle in some arbitrary way, the resulting 
Legendrian will always be isotopic to a rainbow connected
sum of two Legendrians isotopic to $\Lambda^{1}$ and $\Lambda^{2}$.
\end{remark}

\begin{proposition}
\label{res:rainbow-map}
	There is a quasi-isomorphism 
	\[
		CE^{*}( \Lambda^{1} \# \Lambda^{2};V_{0}^{1} \cup
		V_{0}^{2};\R^{4} ) \isomto 
		CE^{*}( \Lambda^{1};V_{0}^{1};\R^{4}) \times 
		CE^{*}( \Lambda^{2};V_{0}^{2};\R^{4}) 
	\]
	where '$\times$' denotes the direct product of dg-algebras.
\end{proposition}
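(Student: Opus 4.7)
The plan is to use the stopped subalgebra machinery of Section~4.2 to decouple the two summands. By placing stops along the rainbow arcs, the stopped subalgebra will split as the desired direct product, and \autoref{res:finite-stopped-models} will recover the full $CE$-algebras on each factor.

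First, after a Legendrian isotopy (justified by \autoref{res:weinstein-lemma}) and the insertion of valence-two singularities $s_1,\ldots,s_k$ at the midpoints of the rainbow arcs $\Pi_1,\ldots,\Pi_k$, I would arrange $\Lambda := \Lambda^1 \# \Lambda^2$ so that $s_1,\ldots,s_k$ lie at the leftmost $x$-coordinate of $\Lambda$ and are therefore left singularities. Attaching a strand $\Omega_j$ at each $s_j$ as in Section~4.2 (which by \autoref{rm:omega-subset} is valid even if $\Lambda$ has additional left singularities that we do not stop) gives a stopped subalgebra
\[
CE^*(\Lambda;V_{0,\partial\Omega};\R^4)
\]
which is quasi-isomorphic to $CE^*(\Lambda;V_0^1 \cup V_0^2;\R^4)$ by \autoref{res:finite-stopped-models}.

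Second, I would verify that this stopped subalgebra splits as an honest direct product of dg-algebras. The valence-two singularities split each $\Pi_j$ into two top handles $\Pi_j^1$ (connecting $\Lambda^1$ to $s_j$) and $\Pi_j^2$ (connecting $s_j$ to $\Lambda^2$). Writing $\Pi^i := \bigcup_j \Pi_j^i$, the skeleton decomposes topologically as $(\Lambda^1 \cup \Pi^1) \sqcup (\Lambda^2 \cup \Pi^2)$ once the stops $\partial\Omega_j$ are removed. By definition the stopped subalgebra excludes all chords of $\partial\Lambda$ passing through a $\partial\Omega_j$, and since $\Lambda^1$ and $\Lambda^2$ are unlinked no Reeb chord of $\Lambda$ in $\R^3$ connects the two halves either. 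Hence both the differential and the algebra structure preserve the decomposition, yielding
\[
CE^*(\Lambda;V_{0,\partial\Omega};\R^4) \cong CE^*(\Lambda^1 \cup \Pi^1;V_{0,\partial\Omega^1}^1;\R^4) \times CE^*(\Lambda^2 \cup \Pi^2;V_{0,\partial\Omega^2}^2;\R^4),
\]
where $\Omega^i$ denotes the half of $\Omega$ attached to the $\Lambda^i$ side.

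Third, each factor is itself a stopped subalgebra: the strand $\Pi_j^i$ together with the half-strand $\Omega_j$ is precisely an $\Omega$-strand attached at the original left singularity of $\Lambda^i$ at which the rainbow arc $\Pi_j$ used to land. A further application of \autoref{res:finite-stopped-models} yields quasi-isomorphisms
\[
CE^*(\Lambda^i \cup \Pi^i;V_{0,\partial\Omega^i}^i;\R^4) \simeq CE^*(\Lambda^i;V_0^i;\R^4),
\]
and composing all of these quasi-isomorphisms completes the proof.

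The hard part will be the geometric step of simultaneously arranging all singularities to the sides while placing the rainbow-arc midpoints $s_j$ at the leftmost $x$-coordinate; this should be a standard Legendrian isotopy argument using the moves of \autoref{res:weinstein-lemma} together with \autoref{res:push-twist}. A secondary technical point is the careful verification via \autoref{res:zero-dim-diff} and the Ng resolution that no generator of the stopped subalgebra crosses the intended decomposition line.
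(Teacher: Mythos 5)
Your first step already breaks down: inserting valence-two singularities $s_j$ in the interiors of the rainbow arcs is not a Legendrian isotopy of the skeleton but a change of the Weinstein handle decomposition (a handle introduction as in \autoref{fig:weinstein-reidemeister}): each arc $\Pi_j$ is replaced by two top handles and a new $0$-handle, and the idempotent ring changes. The paper is explicit that $CE^{*}$ is \emph{not} invariant under such moves; by \autoref{res:weinstein-isotopy-cor} the algebra of the subdivided skeleton only contains the original algebra, up to quasi-isomorphism, as the corner subalgebra of words avoiding the new half-handle, and Section \ref{sec:examples} exhibits Weinstein-isotopic Legendrians with genuinely different quasi-isomorphism types (e.g.\ $\Lambda_{\theta}^{n}$ versus $\Lambda_{A}^{n-1}$, whose cohomologies do not even have the same number of orthogonal idempotent classes). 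Consequently, applying \autoref{res:finite-stopped-models} to the subdivided Legendrian identifies your stopped subalgebra with the Chekanov--Eliashberg algebra of the \emph{subdivided} skeleton, not with $CE^{*}( \Lambda^{1}\#\Lambda^{2};V_{0}^{1}\cup V_{0}^{2};\R^{4} )$, and the claimed first quasi-isomorphism is unjustified (and in general false).

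The splitting step also fails: a one-point stop cannot disconnect the boundary circle of the $0$-handle at $s_j$. After attaching $\Omega_j$, that circle carries three marked points, namely the endpoints of the two halves $\Pi_j^{1},\Pi_j^{2}$ and the single point $\partial\Omega_j$, and exactly one of the two counter-clockwise arcs between the two $\Pi$-endpoints avoids $\partial\Omega_j$. The corresponding chord $c^{0}$ is a generator of the stopped subalgebra and connects your two putative factors, so the differential and product do not preserve the proposed decomposition and no direct product splitting holds at the chain level. The paper's proof takes a different and simpler route that avoids both problems: one isotopes $\Lambda^{1}\#\Lambda^{2}$ as in \autoref{fig:exact-stops-isotopy} so that the rainbow handles acquire Reeb chords $a_{ij}$ with $\partial a_{kk}=\pm e_{k}$ plus quadratic terms, then uses the exact-idempotent removal of \autoref{res:exact-removal} to delete the handles in $\Pi$ one at a time, arriving at $CE^{*}( \Lambda^{1}\cup\Lambda^{2};V_{0}^{1}\cup V_{0}^{2};\R^{4} )$, which is literally the direct product because $\Lambda^{1}$ and $\Lambda^{2}$ are unlinked. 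Any stop-style argument would have to block \emph{both} arcs at each junction, which is in effect what removing the whole handle accomplishes.
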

\begin{proof}
	Similarly to in the proof of \autoref{res:stopped-lemma}, one can
	perform an isotopy of $\Lambda^{1} \# \Lambda^{2}$ introducing 
	chords as in \autoref{fig:exact-stops-isotopy} and use 
	\autoref{res:exact-removal} to successively remove the handles in $\Pi$. 
	We then obtain a quasi-isomorphism 
	$CE^{*}( \Lambda^{1}\#\Lambda^{2};V_{0};\R^{4} )  \isomto
	CE^{*}( \Lambda^{1}\cup \Lambda^{2};V_{0}^{1} \cup 
	V_{0}^{2};\R^{4} )$, and since $\Lambda^{1}$ and $\Lambda^{2}$ are
	unlinked there is a natural isomorphism $CE^{*}( \Lambda^{1}\cup
	\Lambda^{2};V_{0}^{1}\cup V_{0}^{2};\R^{4} ) \cong
	CE^{*}( \Lambda^{1};V_{0}^{1};\R^{4}) \times 
	CE^{*}( \Lambda^{2};V_{0}^{2};\R^{4})$.
\end{proof}

Using this, we can compute the cohomology of the handcuff graph.
\begin{example}
	Let $\Lambda$ be the singular Legendrian illustrated in
	\autoref{fig:handcuff-graph},
	obtained by connecting two unknots with a single handle. 
\begin{figure}[!htb]
    \centering
    
    \import{./figures/}{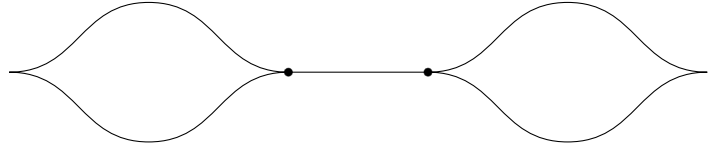}

    \caption{The front projection of two unlinked unknots connected by a handle.}
    \label{fig:handcuff-graph}
\end{figure}
	As noted in \autoref{rm:rainbow-one}, $\Lambda$ is isotopic to a
	rainbow connected sum of two unknots. By \autoref{res:rainbow-map} and
	\autoref{ex:unknot} we then have
	\[
		CE^{*}( \Lambda;V_{0};\R^{4} ) \cong \textbf{k} \times
		\textbf{k}.
	\] 
	This result was expected by An--Bae in \cite[Section 8]{AB20}.
\end{example}
\subsection{Finite dimensional models}
We here construct an infinite class of Legendrians for which 
\autoref{res:opening} gives finite dimensional models.

\begin{definition}
\label{def:permutation-legendrians}
	A \emph{handle permutation of order $n$} is a word
	$\sigma=\sigma_1\sigma_2\ldots\sigma_2$ of elements $\sigma_{i} \in
	\{1,2,\ldots,n\}$ such that each $k \in \{1,2,\ldots,n\}$ appears
	precisely two times in $\sigma$. We write $\sigma( i^{-} )$ and
	$\sigma( i^{+} )$ for the unique numbers such that 
	$\sigma_{\sigma( i^{-} )}=\sigma_{\sigma( i^{+} )}=i$ and $\sigma(
	i^{-} ) < \sigma( i^{+} )$. 
	If two handle permutations are related by a permutation (i.e.
	relabeling) of
	$\{1,2,\ldots,n\}$ we consider them to be equivalent.
\end{definition}
\begin{definition}
	Let the \emph{standard unknot of size $l$} be the knot whose front
	diagram is obtained by a uniform scaling of the front of the standard
	unknot illustrated to the left in \autoref{fig:unknot}, such that the
	length of the Reeb chord becomes $l$. Given a handle permutation
	$\sigma$ of order $n$, we define $\Lambda^{\circ}_{\sigma}$ to be the
	smooth bordered Legendrian consisting of $n$ strands
	$\Lambda_1^{\circ}, \ldots ,\Lambda_n^{\circ}$ such that each strand
	$\Lambda_i^{\circ}$ is a copy of the left half of the standard unknot
	of size $\sigma( i^{+} ) - \sigma( i^{-} )$ translated so that the
	upper and lower ends of $\Lambda_{i}^{\circ}$ have the respective front
	coordinates $( 0,\sigma( i^{+} ))$ and $( 0,\sigma( i^{-} ))$. We then
	define $\Lambda_{\sigma}$ to be the unique singular Legendrian with
	opening $\Lambda^{\circ}_{\sigma}$ such that $\Lambda_{\sigma}$ only
	has one singularity, and call it the \emph{permutation Legendrian} of
	$\sigma$. We denote by $V_{\sigma}$ the Weinstein thickening of
	$\Lambda_{\sigma}$. Note that this construction is independent of the
	choice of representative of the handle permutation, as defined above.
\end{definition}

The algebra $CE^{*}( \Lambda_{\sigma}^{ \circ};\R^{4} )$ has $n$
idempotents and one Reeb chords generator $a_{ij}$ from $i$ to $j$ for each pair 
$i$ and $j$ such that $\sigma( i^{-} ) < \sigma( j^{-} ) < \sigma( i^{+} ) <
\sigma( j^{+} )$. If one
gives the handles the same Maslov potential (under the canonical identification
by translation and rescaling) then $| a_{ij} | = 1$ for all $i$ and $j$.
The differential is given by
\[
	\partial a_{ij} = \sum \pm a_{kj}a_{ik}
\] 
where the sum is taken over all $1 < k < n$ for which the chords $a_{kj}$ and
$a_{ik}$ exist, i.e. all $k$ such that $\sigma( i^{-} ) < \sigma( k^{-} ) <
\sigma( i^{+} ) < \sigma( k^{+} )$ and $\sigma( k^{-} ) < \sigma( j^{-} ) <
\sigma( k^{+} ) < \sigma( j^{+} )$. The
finite dimensionality implies that the cohomology can be readily computed.
\begin{example}
\label{ex:standard-an}
	Let $n > 0$ and let $\sigma$ be the handle permutation of order $n$ given by
	$\sigma(i^{-}) = i$ and $\sigma(i^{+}) = 2i$. We call $\Lambda_{A}^{n}
	:= \Lambda_{\sigma}$ the \emph{standard $A_{n}$-Legendrian}. 
\begin{figure}[!htb]
    \centering
    
    \import{./figures/}{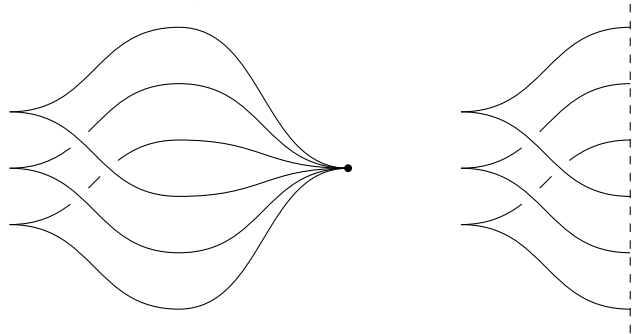}

    \caption{The front projection of the 
	    standard $A_{n}$-Legendrian $\Lambda_{A}^{n}$ and its opening
    $\Lambda_{A}^{n,\circ}$.} 
    \label{fig:an-pos-neg-sing}
\end{figure}
	The algebra $CE^{*}( \Lambda_{A}^{n,\circ};\R^{4} )$ has one
	generator $a_{ij}$ for each $1 \leq i < j \leq n$, with  
	\[
		\partial a_{ij} = \sum_{i < k < j} \pm a_{kj}a_{ik}.
	\] 
	\begin{proposition}
	The minimal model of $CE^{*}( \Lambda_{A}^{n};V_{A,0}^{n};\R^{4} )$ is
	isomorphic to the path algebra of the $A_{n}$-quiver
\[\begin{tikzcd}
	{\stackrel{1}{\bullet}} & {\stackrel{2}{\bullet}} & \stackrel{3}{\bullet} & 
	{\ldots} & {\stackrel{n-1}{\bullet}} & 
	{\stackrel{n}{\bullet}} 
	\arrow["{\alpha_1}", from=1-1, to=1-2]
	\arrow["{\alpha_2}", from=1-2, to=1-3]
	\arrow["{\alpha_3}", from=1-3, to=1-4]
	\arrow["{\alpha_{n-1}}", from=1-4, to=1-5]
	\arrow["{\alpha_{n-2}}", from=1-5, to=1-6]
\end{tikzcd}\]
	bound by the relations $\alpha_{i+1}\alpha_{i}=0$ for $1 \leq i \leq
	n-2$, without any higher operations.
	\end{proposition}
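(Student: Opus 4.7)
The plan is to first apply \autoref{res:opening} with $t$ taken to be the unique right singularity of $\Lambda_A^n$, which reduces the problem to computing the minimal $A_\infty$-model of the finite-dimensional bordered dg-algebra $CE^{*}(\Lambda_A^{n,\circ};\R^{4})$ recalled above.

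The central observation is that the differential preserves the source and target of each composable word: a word $a_{i_1j_1}\cdots a_{i_mj_m}$ has source $i_m$ and target $j_1$, and every Leibniz term from expanding $\partial a_{i_lj_l}=\sum_{i_l<k<j_l}\pm a_{kj_l}a_{i_lk}$ produces a word with the same source and target. Hence the chain complex splits as a direct sum of subcomplexes $C_{p,q}$ indexed by ordered pairs of vertices $1 \leq p \leq q \leq n$. A composable word of source $p$ and target $q$ is determined by its set of intermediate vertices, giving a bijection between a basis of $C_{p,q}$ and subsets $S \subseteq \{p+1,\ldots,q-1\}$, with $S$ contributing a basis element $w_S$ of degree $|S|+1$ and with the differential acting as $w_S \mapsto \sum_{k \notin S} \pm w_{S \cup \{k\}}$. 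This is (a shift of) the augmented cochain complex of a standard simplex, and so is acyclic for $q-p \geq 2$; an explicit null-homotopy is given by the operator sending $w_S$ to $w_{S\setminus \{p+1\}}$ if $p+1 \in S$ and to $0$ otherwise. The surviving contributions to $H^{*}$ come only from $C_{p,p}$ (the idempotents, in degree $0$) and $C_{p,p+1}$ (the short chords $a_{p,p+1}$, in degree $1$). The assignment $e_i \mapsto e_i$, $a_{i,i+1} \mapsto \alpha_i$, with all remaining generators and all words of length $\geq 2$ sent to $0$, then descends to a surjective dg-algebra quasi-isomorphism onto the bound path algebra $P$; the relation $\alpha_{i+1}\alpha_i = 0$ arises precisely from the coboundary $a_{i+1,i+2}a_{i,i+1} = \pm \partial a_{i,i+2}$.

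Finally, the minimal model structure matches $P$ with trivial higher operations. The multiplication on $H^{*}$ is induced by that on $CE^{*}$ and hence coincides with multiplication in $P$ after descent. The idempotents are strict units of the $A_\infty$-structure, so $\mu_k$ vanishes on any input containing an idempotent whenever $k \neq 2$. For $k \geq 3$ inputs chosen from among the degree-one classes $\alpha_i$, the operation $\mu_k$ has degree $2 - k$, producing an output of total degree $k + (2-k) = 2$; since $H^{2} = 0$ this forces $\mu_k = 0$. I expect the acyclicity of the subcomplexes $C_{p,q}$ to be the main substantive step, but once the source-target splitting is observed the argument collapses to the one-line null-homotopy above, so the proof should be short.
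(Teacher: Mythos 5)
Your proposal is correct and follows essentially the same route as the paper: reduce to the opening $\Lambda_A^{n,\circ}$ via \autoref{res:opening}, compute the cohomology by splitting the complex into acyclic pieces (your source--target subcomplexes with the explicit null-homotopy are exactly the total-concatenation classes handled in \autoref{res:short-chords-generate}), and exhibit the explicit dg-algebra quasi-isomorphism killing the chords $a_{ij}$ with $j-i>1$. Your closing degree/strict-unitality argument for the vanishing of higher operations is fine but redundant, since that explicit quasi-isomorphism onto the bound path algebra (which has zero differential, hence is already minimal) gives formality directly, as in the paper.
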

	\begin{proof}
		The differential of $CE^{*}( \Lambda_{A}^{n,\circ};\R^{4} )$ 
		is essentially the same differential as $\partial_{0}$ in
		\autoref{ssec:surgery-surfaces} and one can use the same argument as in
		\autoref{res:short-chords-generate} to see that the cohomology is
		generated by the residues of the chords of the form $a_{i,i+1}$ for $1
		\leq i < n$, with zero multiplication. The cohomology is thus
		isomorphic to the path algebra. There is a dg-algebra quasi-isomorphism
		$CE^{*}( \Lambda_{A}^{n,\circ};\R^{4} ) \to
		H^{*}CE( \Lambda_{A}^{n,\circ};\R^{4} )$ given by
		$a_{i,i+1} \mapsto [a_{i,i+1}]$ and $a_{ij} \mapsto 0$ for 
		$j-i > 1$, which shows that this indeed is the minimal model
		of $CE^{*}( \Lambda_{A}^{n,\circ};\R^{4} )$, and 
		then by \autoref{res:opening} also the minimal model of $CE^{*}(
		\Lambda_{A}^{n};V_{A,0}^{n};\R^{4} )$
	\end{proof}
	
	Partial computations of this example were done by An--Bae in
	\cite[Section 8]{AB20}, where they expected this result. These
	computations are related to the computations using the technology of
	microlocal sheaf theory; see Nadler's computations for the $A_n$-arboreal
	Legendrian from \cite{Nad17}.

	As a geometric application, we can use this computation to obstruct the
	existence of certain isotopies, compare \cite{Mis03}.
	\begin{corollary}
		A Legendrian isotopy from $\Lambda_{A}^{n}$ to itself cannot
		produce a non-trivial permutation of the handles. 
	\end{corollary}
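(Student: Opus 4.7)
The plan is to argue by contradiction using the computation of the minimal model just established. Suppose there is a Legendrian isotopy from $\Lambda_{A}^{n}$ to itself inducing a nontrivial permutation $\pi \in S_{n}$ of the top handles. By \autoref{res:invariance} together with naturality of the invariance map, this isotopy produces a dg-algebra quasi-isomorphism $\Phi$ of $CE^{*}(\Lambda_{A}^{n};V_{A,0}^{n};\R^{4})$ to itself under which the idempotent $e_{i}$ (attached to the $i$-th handle) is sent to the idempotent $e_{\pi(i)}$ (attached to its image under the isotopy). Passing to cohomology gives an algebra automorphism $\phi := H^{*}\Phi$ of the minimal model $B$ with $\phi(e_{i}) = e_{\pi(i)}$.

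Now I use the explicit description of $B$. The underlying vector space of $B$ has basis $\{e_{i}\}_{i=1}^{n} \cup \{\alpha_{i}\}_{i=1}^{n-1}$, and the bimodule piece $e_{j} B e_{k}$ is one-dimensional spanned by $e_{k}$ when $j=k$, one-dimensional spanned by $\alpha_{k}$ when $j = k+1$, and zero otherwise. Since $\phi$ is an automorphism, $\phi(\alpha_{i}) = \phi(e_{i+1}\alpha_{i}e_{i})$ lies in $e_{\pi(i+1)} B e_{\pi(i)}$ and must be a nonzero element that is not an idempotent. By the description of $B$ this forces $\pi(i+1) = \pi(i)+1$ for every $1 \leq i \leq n-1$. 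The only permutation of $\{1,\ldots,n\}$ satisfying this shift relation is the identity, contradicting the assumption that $\pi$ is nontrivial.

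The main thing requiring care is the claim that the quasi-isomorphism furnished by invariance really sends $e_{i}$ to $e_{\pi(i)}$, rather than to some more complicated element. This should follow by inspecting the construction in \cite[Section 2.1]{AE21}: the invariance quasi-isomorphism is built from a Lagrangian cobordism traced out by the isotopy, and since the idempotents correspond geometrically to the handles, the map on idempotents is dictated strictly by the permutation $\pi$ which the isotopy performs on the handles. Once this functoriality is in hand, the rest of the argument is the short algebraic computation above.
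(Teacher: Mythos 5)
Your proposal is correct and takes essentially the same route as the paper: a Legendrian isotopy realizing a permutation $\pi$ of the handles induces an algebra automorphism of $H^{*}CE(\Lambda_{A}^{n};V_{A,0}^{n};\R^{4})$ sending $e_{i}$ to $e_{\pi(i)}$, and such an automorphism cannot exist for nontrivial $\pi$. The only difference is that you spell out the algebraic step the paper leaves implicit, namely that the bimodule pieces $e_{\pi(i+1)}Be_{\pi(i)}$ of the minimal model force $\pi(i+1)=\pi(i)+1$ for all $i$ and hence $\pi=\mathrm{id}$.
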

	\begin{proof}
		An isotopy from $\Lambda_{A}^{n}$ to itself permuting 
		the handles induces an algebra automorphism of $H^{*}CE(
		\Lambda_{A}^{n};V_{A,0}^{n};\R^{4} )$ permuting the corresponding 
		idempotents in the same way, and no non-trivial
		permutations of the idempotents of 
		$H^{*}CE( \Lambda_{A}^{n};V_{A,0}^{n};\R^{4} )$ can be produced by
		an algebra automorphism.
	\end{proof}
\end{example}

\subsubsection{Mutations}
Consider an abstract Weinstein surface $V$ such that $V_{0}$ is connected
and let $\partial \Lambda \subset V_{0}$ be the critical attaching spheres. By
fixing a base point in $\partial V_{0}$ and following the Reeb flow around
$\partial V_{0}$ we get a handle permutation $\sigma$ such that $V$
and $V_{\sigma}$ are isomorphic as Weinstein manifolds and
there is an embedding $V \hookrightarrow \R^{3}$ whose image is $V_{\sigma}$. The
handle permutation $\sigma$ depends on the base point and will in general result in
non-Legendrian isotopic $V_{\sigma}$. However, given a fixed handle
decomposition, $\sigma$ is unique up to cyclic
permutations and $V_{\sigma}$ is unique up to Weinstein isotopy.  

\begin{definition}
	Let $\sigma$ be a handle permutation of order $n$ and let $k \in
	\Z$. The \emph{shift of $\sigma$ by $k$} is the
	permutation
	\[
		\sigma[k]:= \sigma_{1-k}\ldots\sigma_{i-k}\ldots\sigma_{n-k},
	\] 
	counting mod $n$ in the indices.
\end{definition}

\begin{proposition}
\label{res:mutation}
	Let $\sigma$ be a handle permutation of order $n$ and let $k \in \Z$.
	There is a Weinstein isotopy
	\[
		V_{\sigma} \sim
		V_{\sigma[k]}.
	\] 
\end{proposition}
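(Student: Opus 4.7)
The plan is to induct on $|k|$, reducing to the case $k = 1$ via the identity $V_{\sigma[k+1]} = V_{(\sigma[k])[1]}$. For $k = 1$, the conceptual content of the proposition is already contained in the paragraph preceding its statement: as an abstract Weinstein surface, $V_\sigma$ is determined by the cyclic order in which the attaching spheres appear along $\partial V_{0,\sigma}$, and a shift $\sigma \mapsto \sigma[1]$ corresponds to moving the base point past a single sphere endpoint. The proof therefore amounts to upgrading this tautological observation into a concrete ambient Weinstein isotopy of hypersurfaces in $\R^3$.

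By \autoref{res:weinstein-lemma}, it suffices to exhibit a sequence of singular Legendrian Reidemeister moves from \autoref{fig:sing-legendrian-reidemeister} together with Weinstein handle introductions and contractions from \autoref{fig:weinstein-reidemeister} taking $\Lambda_\sigma$ to $\Lambda_{\sigma[1]}$. Place $\Lambda_\sigma$ in standard position with a single left singularity whose strand endpoints occupy the integer heights $1, \ldots, 2n$ from bottom to top. I would isolate the strand $\Lambda_j$ with $\sigma(j^+) = 2n$, whose top endpoint sits at the very top of the singularity. The plan is to rearrange the attachment of this strand by first introducing a Weinstein handle that detaches the top endpoint from the singularity, then sliding the freed strand end around the outside of the singularity using the strand-rearrangement move at a singular point, and finally contracting an auxiliary handle so that the endpoint reattaches at the bottom of the singularity. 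The net effect is to cyclically shift the endpoint labels by one position; after a planar isotopy returning the result to standard form, the resulting diagram has handle permutation $\sigma[1]$ by direct inspection, and so is Legendrian isotopic to $\Lambda_{\sigma[1]}$.

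The main obstacle is to verify that this rearrangement can actually be executed within the allowed class of moves from \autoref{fig:sing-legendrian-reidemeister} and \autoref{fig:weinstein-reidemeister} without introducing extraneous crossings or disturbing the nested right-half-unknot structure of the remaining strands prescribed by \autoref{def:permutation-legendrians}. This amounts to careful bookkeeping: as the chosen endpoint sweeps past the others, one must check that the intermediate tangles can always be isotoped back into canonical form, and that the final diagram agrees with $\Lambda_{\sigma[1]}$ in standard form rather than some merely Weinstein-isotopic variant. Once this local analysis is carried out for a single shift, the inductive step completes the proof.
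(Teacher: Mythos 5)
Your reduction to $k=1$ and the framing via \autoref{res:weinstein-lemma} (exhibit a sequence of singular Reidemeister moves together with handle introductions and contractions) match the paper, but your mechanism for the single shift has a genuine gap at its central step. You propose to split off only the topmost end $n^{+}$ by a handle introduction, slide the freed end ``around the outside'' of the singularity, and then contract an auxiliary handle so that the end ``reattaches at the bottom.'' The problem is that contracting a connecting handle reinserts the detached block into the cyclic order of $\partial \Lambda$ along $\partial V_{0}$ exactly at the point where that handle is attached to the main singularity --- and the introduction move creates this attachment at the top, adjacent to the block you split off. Moving that attachment point past the other $2n-1$ ends is not achievable by Legendrian isotopy, since Legendrian isotopy preserves the cyclic order of the attaching points on $\partial V_{0}$; indeed, this is precisely why the proposition asserts only a Weinstein isotopy and why, e.g., the shifts in \autoref{ex:an-no-relations} are not Legendrian isotopic to each other. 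After your proposed slide, the auxiliary handle wraps around the left of the main singularity and is no longer in the local form required by the contraction move of \autoref{fig:weinstein-reidemeister}; bringing it back into contractible position undoes the slide, so as described the sequence either returns the end to the top or stalls. In other words, the step you defer as ``careful bookkeeping'' is the entire content of the proposition, and your specific plan for it begs the question.

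The paper's proof avoids this by never re-routing an attachment point: after relabeling so that $\sigma_{2n}=n$, it introduces one handle that splits the singularity into a lower part (ends in positions $\leq \sigma(n^{-})$) and an upper part (positions $> \sigma(n^{-})$), then uses Reidemeister IV moves at $n^{-}$ and $n^{+}$ to bring the whole handle $\Lambda_{n}$ into position just below the introduced connector, and finally observes that this intermediate two-singularity configuration is reached from $V_{\sigma[1]}$ by the analogous introduction and Reidemeister IV moves --- a meet-in-the-middle argument in which the ``reattachment'' is simply the reverse of a splitting performed on the other side. Your argument could be repaired along these lines by splitting between positions $\sigma(n^{-})$ and $\sigma(n^{-})+1$ rather than isolating the single top end, and comparing with the analogous splitting of $\Lambda_{\sigma[1]}$ instead of sliding an end around the singularity.
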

\begin{proof}
	It is sufficient to show this for $k=1$. The isotopy is illustrated in
	\autoref{fig:weinstein-isotopy-proof} and is performed as follows. We
	relabel so that $\sigma_{2n} = n$ and 
	give the ends of the handles $\Lambda_i$ of $V_{\sigma}$ the labeling
	$i^{\pm}$ where $i=1,\ldots,n$, as specified in
	\autoref{def:permutation-legendrians}. In the first step, we perform a
	Weinstein isotopy introducing a handle with a right cusp as in
	\autoref{res:weinstein-lemma}, which splits
	the singularity in two singularities such that an end $i^{\pm}$
	belongs to the lower singularity if $\sigma( i^{\pm} ) \leq \sigma(
	n^{-} )$ and to the upper singularity if $\sigma( i^{\pm}) > \sigma(
	n^{-} )$. In the second and third steps, we perform
	Reidemeister IV moves at the ends $n^{-}$ and $n^{+}$ which moves the
	handle $\Lambda_{n}$ into a position slightly below the introduced
	handle, obtaining the Legendrian in the fourth picture in
	\autoref{fig:weinstein-isotopy-proof}. This
	Legendrian can be obtained from $V_{\sigma[1]}$ by an analogous
	Weinstein isotopy, as shown in the two last pictures of the figure. 
	We thus have $V_{\sigma} \sim V_{\sigma[1]}$.
\begin{figure}[!htb]
    \centering
    
    \import{./figures/}{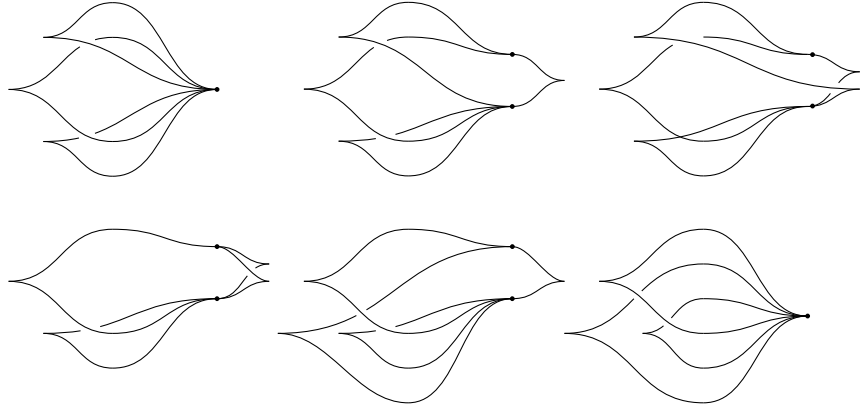}

    \caption{A Weinstein isotopy between the permutation Legendrians
	    $\Lambda_{ 121323 }$ and $\Lambda_{ 312132 }$. 
	    The figure should be read from left to right and top
    	    to bottom.}
    \label{fig:weinstein-isotopy-proof}
\end{figure}
\end{proof}

Viewing the Chekanov--Eliashberg dg-algebras as path algebras,
the quiver of $CE^{*}( \Lambda_{ \sigma[1] }^{\circ};\R^{4} )$ can be obtained
from that of $CE^{*}( \Lambda_{\sigma}^{\circ};\R^{4} )$ by flipping all
arrows with target on the vertex corresponding to the $\sigma_{2n}$:th handle.
The isotopy can therefore be viewed as geometric realization of the 
Bernstein--Gelfand--Ponomarev reflection functors \cite{BGP73}.
\begin{example}
\label{ex:an-no-relations}
	Consider the handle permutation $\sigma = 121323$. 
\begin{figure}[!htb]
    \centering
    
\begingroup%
  \makeatletter%
  \providecommand\color[2][]{%
    \errmessage{(Inkscape) Color is used for the text in Inkscape, but the package 'color.sty' is not loaded}%
    \renewcommand\color[2][]{}%
  }%
  \providecommand\transparent[1]{%
    \errmessage{(Inkscape) Transparency is used (non-zero) for the text in Inkscape, but the package 'transparent.sty' is not loaded}%
    \renewcommand\transparent[1]{}%
  }%
  \providecommand\rotatebox[2]{#2}%
  \newcommand*\fsize{\dimexpr\f@size pt\relax}%
  \newcommand*\lineheight[1]{\fontsize{\fsize}{#1\fsize}\selectfont}%
  \ifx\svgwidth\undefined%
    \setlength{\unitlength}{337.91008416bp}%
    \ifx\svgscale\undefined%
      \relax%
    \else%
      \setlength{\unitlength}{\unitlength * \real{\svgscale}}%
    \fi%
  \else%
    \setlength{\unitlength}{\svgwidth}%
  \fi%
  \global\let\svgwidth\undefined%
  \global\let\svgscale\undefined%
  \makeatother%
  \begin{picture}(1,0.41257531)%
    \lineheight{1}%
    \setlength\tabcolsep{0pt}%
    \put(0,0){\includegraphics[width=\unitlength,page=1]{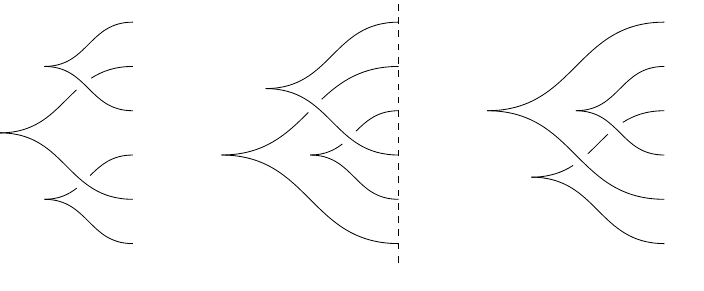}}%
    \put(0.10029907,0.00543782){\color[rgb]{0,0,0}\makebox(0,0)[lt]{\lineheight{1.25}\smash{\begin{tabular}[t]{l}$\sigma$\end{tabular}}}}%
    \put(0.47779328,0.00543782){\color[rgb]{0,0,0}\makebox(0,0)[lt]{\lineheight{1.25}\smash{\begin{tabular}[t]{l}$\sigma[1]$\end{tabular}}}}%
    \put(0.85528703,0.00543782){\color[rgb]{0,0,0}\makebox(0,0)[lt]{\lineheight{1.25}\smash{\begin{tabular}[t]{l}$\sigma[2]$\end{tabular}}}}%
    \put(0,0){\includegraphics[width=\unitlength,page=2]{source-mutation-example.pdf}}%
  \end{picture}%
\endgroup%

    \caption{The openings of the three Legendrians obtained by shifting
    $\sigma=121323$.}
    \label{fig:source-mutation-example}
\end{figure}
	The openings of 
	$\Lambda_{\sigma}$ and the shifts $\Lambda_{ \sigma[1] }$ and
	$\Lambda_{ \sigma[2] }$ are illustrated in \autoref{fig:source-mutation-example}.
	The Chekanov--Eliashberg dg-algebras of the openings of all three have
	vanishing differential and are isomorphic to the respective path
	algebras of the following quivers:
\[\begin{tikzcd}
	\bullet & \bullet & \bullet, && \bullet & \bullet & \bullet, && \bullet &
	\bullet & \bullet.
	\arrow[from=1-1, to=1-2]
	\arrow[from=1-2, to=1-3]
	\arrow[from=1-7, to=1-6]
	\arrow[from=1-5, to=1-6]
	\arrow[from=1-10, to=1-11]
	\arrow[from=1-10, to=1-9]
\end{tikzcd}\]
\end{example}

The Weinstein isotopies in \autoref{res:mutation} are, however, not all
Weinstein isotopies which exist between the permutation Legendrians, as the
following example shows.
\begin{example}
	Let $n > 1$ and let $\sigma$ the handle permutation of order $n$,
	given by
	\begin{align*}
		\sigma( 1^{-} ) = 1, && \sigma( i^{-} ) = 2i-2, &&& \sigma(
		n^{-} ) = 2n-2,\\
		\sigma( 1^{+} ) = 3, && \sigma( i^{-} ) = 2i+1, &&& \sigma(
		n^{-} )
		= 2n,
	\end{align*}
	for $i \neq 1,n$. The permutation Legendrian $\Lambda_{A'}^{n}:=
	\Lambda_{\sigma}$ and its opening are
	illustrated in
	\autoref{fig:an-no-relations}.
\begin{figure}[!htb]
    \centering
    
    \import{./figures/}{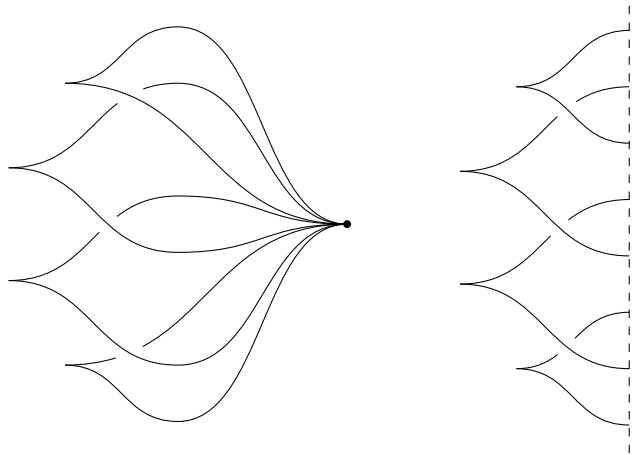}

    \caption{The front projection of the Legendrian $\Lambda_{A'}^{n}$ and its
	    opening $\Lambda_{A'}^{n, \circ}$.}
    \label{fig:an-no-relations}
\end{figure}
The Chekanov--Eliashberg dg-algebra of $\Lambda_{A'}^{n, \circ}$ 
has differential zero and is isomorphic to the path
algebra of the $A_{n}$-quiver 
\[\begin{tikzcd}
	{\stackrel{1}{\bullet}} & {\stackrel{2}{\bullet}} & \stackrel{3}{\bullet} & 
	{\ldots} & {\stackrel{n-1}{\bullet}} & 
	{\stackrel{n}{\bullet}} 
	\arrow["{\alpha_1}", from=1-1, to=1-2]
	\arrow["{\alpha_2}", from=1-2, to=1-3]
	\arrow["{\alpha_3}", from=1-3, to=1-4]
	\arrow["{\alpha_{n-1}}", from=1-4, to=1-5]
	\arrow["{\alpha_{n-2}}", from=1-5, to=1-6]
\end{tikzcd}\]
without any relations. We expect that this Weinstein surface can be naturally 
realized a Milnor fiber of the $A_{n}$-singularity in the boundary of $B^{4}$,
see \cite[Corollary 1.17]{GPS18}\cite{Nad17}.
\begin{proposition}
\label{res:an-isotopy}
	There is a Weinstein isotopy,
	\[
		V_{A}^{n} \sim V_{ A' }^{n}.
	\] 
\end{proposition}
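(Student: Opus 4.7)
The plan is to exhibit an explicit Weinstein isotopy from $V_A^n$ to $V_{A'}^n$ as a finite sequence of singular Legendrian Reidemeister moves and Weinstein handle introductions and contractions; by \autoref{res:weinstein-lemma} this suffices. Both $\Lambda_A^n$ and $\Lambda_{A'}^n$ have a single left singularity of valency $2n$, and they differ only in the cyclic order in which the handle ends appear around this singularity, so the task is combinatorial: transform one cyclic order into the other using the allowed moves.

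I would proceed by induction on $n$, patterned on the proof of \autoref{res:mutation}. The base case $n = 2$ is immediate since $\sigma_A^2 = \sigma_{A'}^2 = 1212$. For the inductive step, I would peel off the outermost handle $\Lambda_1$ of $\Lambda_A^n$, apply the inductive hypothesis to the remaining $n-1$ handles to bring them into the zig-zag configuration of $\Lambda_{A'}^{n-1}$, and then reposition $\Lambda_1$ by introducing an auxiliary Weinstein handle near its feet, performing Reidemeister IV and VI moves from \autoref{fig:sing-legendrian-reidemeister} to slide a foot of $\Lambda_1$ past the neighbouring handle, and finally contracting the auxiliary handle. The net effect is a single topological handle slide realized as a Weinstein isotopy. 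Combined with the cyclic shift isotopies of \autoref{res:mutation}, which can be used to reposition the base point between steps, this should produce the full transformation from $\sigma_A^n$ to $\sigma_{A'}^n$.

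The main obstacle is making this single handle slide explicit. The Weinstein introductions and contractions in \autoref{fig:weinstein-reidemeister} are not themselves handle slides, so a slide has to be decomposed into an introduction, a Reidemeister isotopy, and a contraction, all performed while keeping the front in the class where \autoref{res:weinstein-lemma} applies, in particular with all singularities on the sides. Since both $V_A^n$ and $V_{A'}^n$ are topologically the same surface (the $A_n$-Milnor fiber for $n \ge 2$), the existence of some valid sequence of moves is guaranteed abstractly; the task is to write it down explicitly, most cleanly as a figure in the style of \autoref{fig:weinstein-isotopy-proof}. Once a single slide is available as a Weinstein isotopy, the reduction from $\sigma_A^n = 1 2 \ldots n\, 1 2 \ldots n$ to $\sigma_{A'}^n$ follows by iterating the slide the requisite number of times.
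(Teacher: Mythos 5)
There is a genuine gap here: your proposal is a strategy outline whose decisive step is exactly the content it defers. The proposition is proved in the paper by exhibiting the isotopy explicitly (\autoref{fig:an-isotopy}): an iterated sequence of Weinstein handle introductions as in \autoref{res:weinstein-lemma}, Reidemeister VI and IV moves, applications of \autoref{res:push-twist}, and handle contractions, carried out once per handle and then closed up. Your plan correctly identifies that a ``handle slide'' must be decomposed into an introduction, a Legendrian isotopy, and a contraction, and the base case $n=2$ is fine, but you stop at ``the main obstacle is making this single handle slide explicit'' and never produce the moves. For this statement the explicit sequence \emph{is} the proof; without it nothing has been established. The inductive peeling is also not obviously local: in $\Lambda_{A}^{n}$ the two ends of the handle you want to set aside interleave the ends of \emph{all} the remaining handles at the single singularity, so applying the inductive hypothesis ``to the remaining $n-1$ handles'' in the presence of that handle again requires precisely the explicit repositioning you have not written down; the cyclic shifts of \autoref{res:mutation} do not bridge this, since $\sigma_{A}^{n}$ and $\sigma_{A'}^{n}$ are not related by shifts.

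The fallback claim that ``the existence of some valid sequence of moves is guaranteed abstractly'' because $V_{A}^{n}$ and $V_{A'}^{n}$ are the same abstract surface is not valid. Weinstein isotopy in this paper (\autoref{ssec:isotopy}) means a smooth isotopy of the hypersurface \emph{inside} $\R^{3}$ through Weinstein hypersurfaces, so it is an ambient notion; abstract Weinstein isomorphism of the thickenings does not imply it, any more than two diffeomorphic surfaces in $\R^{3}$ must be ambiently isotopic. Indeed the paper itself treats ambient Weinstein isotopy as something that must be obstructed or constructed case by case: the Hochschild-homology argument showing $\Lambda_{\textup{cyc}}^{n}$ is not Weinstein isotopic to any permutation Legendrian would be pointless if abstract data sufficed, and the uniqueness statement in the mutation subsection only covers permutations of a \emph{fixed} handle decomposition up to cyclic shift. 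So the abstract-existence appeal cannot substitute for the explicit isotopy, and the proof as proposed is incomplete.
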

\begin{proof}
	The isotopy is illustrated in \autoref{fig:an-isotopy} for $n=4$. The
	procedure is the same for all $n$. 
\begin{figure}[!htb]
    \centering
    
    \import{./figures/}{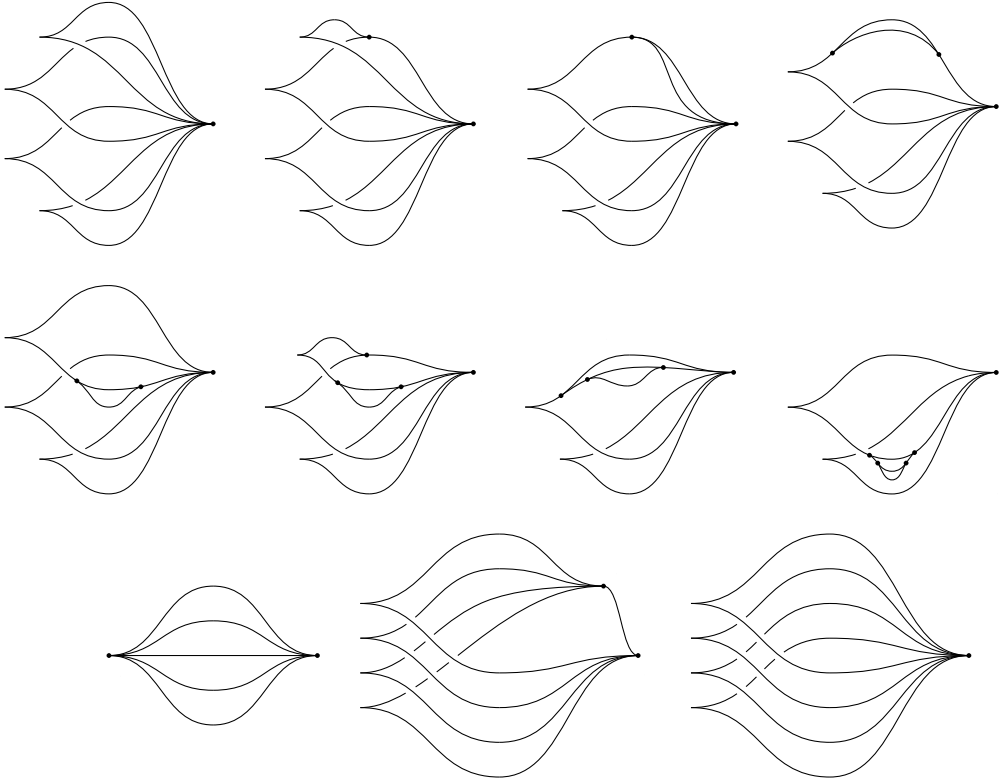}

    \caption{A Weinstein isotopy between $V_{A}^{n}$ and $V_{ A' }^{n}$ for
	    $n=4$. The figure should be read from left to right and top to bottom.}
    \label{fig:an-isotopy}
\end{figure}
	The first step is a Weinstein handle
	introduction as in \autoref{res:weinstein-lemma}. The second step is a
	Reidemeister VI move. The third step is another handle introduction.
	The fourth step is an application of \autoref{res:push-twist}. The
	fifth, sixth, and seventh steps are performed in the same way as the
	first four: by introducing a handle, performing a Reidemeister VI move,
	introducing another handle, and then applying 
	\autoref{res:push-twist}. By repeating this procedure for each
	handle and then performing a series of Weinstein handle contractions one 
	then obtains the Legendrian in the ninth picture. The second to last step is
	then a Reidemeister IV move, followed by another application of 
	\autoref{res:push-twist}, and the final step is a Weinstein
	handle contraction.
\end{proof}
\end{example}

Examples \ref{ex:standard-an} and \ref{ex:an-no-relations} 
demonstrate that the Chekanov--Eliashberg dg-algebra can undergo
significant changes under Weinstein isotopy. Recall that this corresponds to
different choices of generators of the partially wrapped Fukaya category
obtained by stopping at the Weinstein surface. It
is interesting to note that the path algebra of the $A_{n}$-quiver with all
quadratic relations quotiented out is the Koszul dual of the path algebra of
the same quiver without relations.  We intend to investigate
bifurcations of the Chekanov--Eliashberg dg-algebra under Weinstein isotopy further in the future.
\subsection{Infinite dimensional models}
In addition to the permutation Legendrians, there are several families of
singular Legendrians with infinite dimensional models for which the 
cohomology can be computed. 
\begin{example}
\label{ex:theta}
	Let $n > 1$ and let $\Lambda_{\theta}^{n}$ be the singular
	Legendrian with one right
	singularity and one left singularity such that when opening
	$\Lambda_{\theta}^{n}$ at both singularities one obtains a bordered Legendrian
	consisting of $n$ strands without cusps, crossings, or singularities.
	It is illustrated for $n=4$ in \autoref{fig:an-plus-one-construction}.
\begin{figure}[!htb]
    \centering
    
    \import{./figures/}{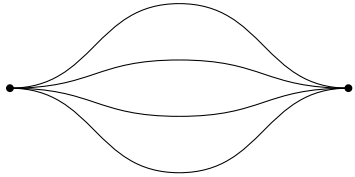}

    \caption{The front projection of the Legendrian $\Lambda_{\theta}^{n}$ for
    $n=4$.}
    \label{fig:an-plus-one-construction}
\end{figure}
	We call $\Lambda_{\theta}^{n}$ the \emph{$\theta_{n}$-Legendrian}. From
	the proof of \autoref{res:an-isotopy} we see that
	$\Lambda_{\theta}^{n}$ is Weinstein isotopic to $\Lambda_{A}^{n-1}$ and
	$\Lambda_{A'}^{n-1}$.  
	\begin{proposition}
	\label{res:theta-minimal}
	The minimal model of $CE^{*}(
	\Lambda_{\theta}^{n};V_{\theta,0}^{n};\R^{4} )$ is isomorphic to
	the $A_{\infty}$-algebra whose underlying module is the path algebra of
	the quiver with one vertex for each $i \in \Z_{n}$ and one arrow
	$\alpha_{i}$ from $i$ to $i+1$ for each $i$, with all compositions
	quotiented out, and whose $A_{\infty}$-operations $\mu_{k}$ act
	by
	 \[
		 \mu_{n}( \alpha_{i+n-1} \otimes\ldots \otimes \alpha_{i+1}
		 \otimes \alpha_{i} ) = \varepsilon_{i}
	 \] 
	for each $i \in \Z_{n}$, and vanish on all other words.
	\end{proposition}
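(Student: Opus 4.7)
The plan is to apply Theorem~\ref{res:opening} to reduce the computation to a bordered Legendrian, and then extract the minimal model by identifying the resulting algebra with an internal algebra handled by Theorem~\ref{res:zero-dim-leg-min-mod}.

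First I would apply Theorem~\ref{res:opening} to the unique right singularity $t$ of $\Lambda_\theta^n$, obtaining
\[
CE^*(\Lambda_\theta^n; V_{\theta,0}^n; \R^4) \cong CE^*(\Lambda_\theta^{n,\circ,t}; \R^4).
\]
The right-hand side is the bordered CE algebra of $\Lambda_\theta^{n,\circ,t}$, a singular bordered Legendrian with a single left singularity of valency $n$ and $n$ open right strands. By definition it is the subalgebra of the closure's CE algebra generated by the chords $l_{ij}^p$ on the left circle, and a direct inspection of the pseudo-holomorphic disks (via the Ng resolution of Section~3.2) shows its differential is $\partial_0 + \partial_{-1}$, with $\partial_{-1}(l_{ii}^1) = \varepsilon_i$ coming from the left subcritical disk.

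Next I would identify this algebra with an internal algebra $CE^*(\partial \Lambda'; V')$ as in Theorem~\ref{res:zero-dim-leg-min-mod}, where the underlying Weinstein surface $V'$ inherits enough topology from the collapsed right singularity of $\Lambda_\theta^n$ (so that no co-core of the corresponding sector is null-homotopic for $n > 1$, exactly as in the topological analysis of $V_\theta^n$ as an $n$-holed sphere). The first case of Theorem~\ref{res:zero-dim-leg-min-mod} then produces the stated minimal model directly: the $n$ short chords $l_{i,i+1}^0$ become the $n$ cyclic arrows $\alpha_i$; the formula $\mu_2(c_2 \otimes c_1) = \partial_{-1}(c_2 * c_1)$ forces the quadratic compositions $\alpha_{i+1}\alpha_i$ to vanish, since for $n \geq 3$ their two-step concatenation does not bound a disk in $V'$; and the sole disk sequence of length $n$, going once around the left subcritical disk, gives
\[
\mu_n(\alpha_{i+n-1} \otimes \cdots \otimes \alpha_i) = \varepsilon_i.
\]
The boundary case $n = 2$, in which $\mu_n = \mu_2$, should be checked separately and absorbs the quadratic relations into the cyclic $\mu_n$.

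The main obstacle will be making the identification in the second step fully precise: one must show that after opening the right singularity, the remaining Weinstein geometry carries the right amount of topology for Theorem~\ref{res:zero-dim-leg-min-mod} to apply with \emph{exactly one} $\mu_n$ operation — reflecting the fact that the two disk sequences that appear naively in $CE^*(\partial \Lambda_\theta^n; V_{\theta,0}^n)$ (one per boundary circle) are collapsed into a single cyclic structure by the opening reduction.
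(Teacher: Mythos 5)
Your overall route is the same as the paper's: open at the right singularity via \autoref{res:opening}, observe that the resulting bordered algebra is exactly its internal algebra, and then read off the minimal model from \autoref{res:zero-dim-leg-min-mod}. The part that needs correcting is your second step. After opening at $t$, the bordered Legendrian has no crossings or cusps, so its Chekanov--Eliashberg algebra is generated only by the chords at the left singularity, i.e.\ it \emph{is} the Chekanov--Eliashberg algebra of $n$ one-point stops in the boundary of the subcritical disk $V_0$ of the left singularity. In particular the relevant surface is a disk; it does not ``inherit topology from the collapsed right singularity,'' and it is not an $n$-holed sphere. This matters: your own formula $\partial_{-1}(l_{ii}^1)=\varepsilon_i$ and the existence of the length-$n$ disk sequences (hence of $\mu_n$) rely precisely on the simple connectivity of that component (see \autoref{res:zero-dim-diff}), so if the surface carried the extra topology you invoke, $\partial_{-1}$ would vanish, there would be no disk sequence, and the claimed $\mu_n$ would not appear -- your justification is internally inconsistent at this point.

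The hypothesis of \autoref{res:zero-dim-leg-min-mod} is instead satisfied for a different reason: the attaching data here consists entirely of one-point stops, and the co-core of a half-handle is null-homotopic only when it cuts off a compact disk bounded by a single Reeb chord of $\partial C$ (the $n=1$ situation); for $n\ge 2$ this does not occur, and indeed the degenerate second case of \autoref{res:zero-dim-leg-min-mod} only concerns $0$-spheres. Once this identification is stated correctly, the remainder of your argument -- unconcatable words of short chords giving the idempotents and the $n$ arrows $\alpha_i$, the vanishing of $\mu_2$ for $n\ge 3$ via $\partial_{-1}$ of the two-chord concatenations, the unique cyclic disk sequence producing $\mu_n(\alpha_{i+n-1}\otimes\cdots\otimes\alpha_i)=\varepsilon_i$, and the separate check that $\mu_2=\mu_n$ when $n=2$ -- is exactly how the paper's proof concludes, and your closing worry about ``two disk sequences'' is moot because only the left subcritical disk survives the opening.
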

	\begin{proof}
		By \autoref{res:opening}, we have a quasi-isomorphism 
		\[
			CE^{*}(
			\Lambda;V_{0};\R^{4} ) \cong CE^{*}(
			(\Lambda_{\theta}^{n})^{\circ,t};V_{0};\R^{4} ),
		\] 
		where $t$ is
		the right singularity. The algebra $CE^{*}(
		(\Lambda_{\theta}^{n})^{\circ,t};V_{0};\R^{4} )$ is isomorphic
		to its internal algebra, i.e. the Chekanov--Eliashberg dg-algebra
		of $n$ one-point stops in the boundary of the disk. The result
		then follows from 
		\autoref{res:zero-dim-leg-min-mod}.
	\end{proof}
	We thus have a third, non-formal representative of the
	Chekanov--Eliashberg dg-algebras of the Weinstein isotopy class of 
	$\Lambda_{A}^{n}$ and $\Lambda_{A'}^{n}$.

	\begin{remark}
	Geometrically, the fact that $\Lambda_{\theta}^{n}$ and the $n$-point
	stop have quasi-isomorphic Chekanov--Eliashberg dg-algebras can be
	explained by noting that $\Lambda_\theta^{n}$ is the $(
	T^{*}D^{1},T^{*}S^{0} )$-stabilization of the $n$-point stop. This
	stabilization construction is a special case of the product
	construction defined in \cite[Section 3.2]{Eli18}, and for which
	Ganatra--Pardon--Shende \cite{GPS18} have constructed a Künneth
	embedding of the corresponding partially wrapped Fukaya categories. In
	light of \autoref{res:sing-surgery-map}, one therefore expects there to
	be a Künneth formula for Chekanov--Eliashberg dg-algebras, from which the
	above quasi-isomorphism would follow.  
	\end{remark}

	Similarly to in \autoref{ex:standard-an}, we can use the above computation 
	to obstruct certain isotopies. 

	\begin{corollary}
		There exists a Legendrian isotopy from $\Lambda_\theta^{n}$ to
		itself realizing a
		given permutation of the handles if and only if the
		permutation is cyclic.
	\end{corollary}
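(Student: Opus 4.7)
The plan is to address the two directions separately, relying on \autoref{res:theta-minimal} for the forward implication and on an explicit geometric construction for the converse.

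For the forward direction, I would argue that a Legendrian isotopy realizing a permutation $\pi$ of the top handles induces, by \autoref{res:invariance}, a quasi-isomorphism of $CE^{*}(\Lambda_\theta^{n};V_{\theta,0}^{n};\R^{4})$ to itself. Passing to cohomology, this descends to a graded algebra automorphism $\phi$ of $H^{*}CE(\Lambda_\theta^{n};V_{\theta,0}^{n};\R^{4})$ sending $\varepsilon_i \mapsto \varepsilon_{\pi(i)}$ (the algebra identity comes from compatibility of $\phi$ with $\mu_2$, since $\mu_1 = 0$ on the minimal model). By \autoref{res:theta-minimal}, the underlying algebra is the path algebra of the cyclic quiver on $\Z_n$ with arrows $\alpha_i:i\to i+1$ and all composable products quotiented out. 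Writing $\alpha_i = \varepsilon_{i+1}\alpha_i\varepsilon_i$, the image $\phi(\alpha_i)$ lies in $\varepsilon_{\pi(i+1)}\cdot H^{*}CE\cdot \varepsilon_{\pi(i)}$, which is spanned by $\alpha_{\pi(i)}$ exactly when $\pi(i+1) = \pi(i)+1$ and vanishes otherwise. Since $\phi$ is an automorphism each $\phi(\alpha_i)$ must be nonzero, forcing $\pi(i+1) = \pi(i)+1$ for every $i\in\Z_n$; hence $\pi$ is a cyclic shift.

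For the converse it suffices to exhibit a single Legendrian isotopy realizing the shift by one, since the remaining cyclic permutations follow by iteration. Such an isotopy can be produced in the front projection by dragging the topmost arc of $\Lambda_\theta^{n}$ around the right singularity (via Reidemeister moves of types IV and VI from \autoref{fig:sing-legendrian-reidemeister}), across the remaining arcs, and back around the left singularity so as to become the new bottommost arc. Alternatively, one may invoke the remark after \autoref{res:theta-minimal}: since $\Lambda_\theta^{n}$ is the $(T^{*}D^{1},T^{*}S^{0})$-stabilization of the $n$-point stop in $S^{1}$, the manifest $\Z_n$-rotation of the $n$-point stop lifts to a Legendrian isotopy of the stabilization that cyclically permutes the handles.

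The forward implication is essentially an algebraic check once the minimal model is in hand, so the main obstacle will lie in the converse: verifying concretely that a cyclic shift is realized by a Legendrian (rather than merely a Weinstein) isotopy. The cyclic arrangement of the $n$ arcs joining the two singularities makes the geometric picture transparent, but translating it into an explicit sequence of singular Legendrian Reidemeister moves, so that the ambient isotopy can be seen to preserve the contact distribution at both singular endpoints simultaneously, is where I would invest the most care.
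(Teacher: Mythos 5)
Your proposal is correct and follows essentially the same route as the paper: the forward direction is the paper's observation that only cyclic shifts of the idempotents extend to algebra automorphisms of the minimal model from \autoref{res:theta-minimal} (you simply spell out the morphism-space bookkeeping), and the converse is the paper's claim that cyclic permutations are produced by singular Reidemeister moves, which you sketch concretely.
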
	
	\begin{proof}
		The only permutations of the idempotents of the minimal model
		in \autoref{res:theta-minimal} which can be realized by an
		algebra automorphism are the cyclic ones. This implies that
		there are no isotopies of $\Lambda$ producing non-cyclic
		permutations of the handles. The cyclic permutations are easy
		to produce using the Reidemeister moves.
	\end{proof}
\end{example}
\begin{example}
Let $n > 0$. We construct a singular Legendrian $\Lambda_{\text{cyc}}^{n}$ as
follows. 
\begin{figure}[!htb]
    \centering
    
    \import{./figures/}{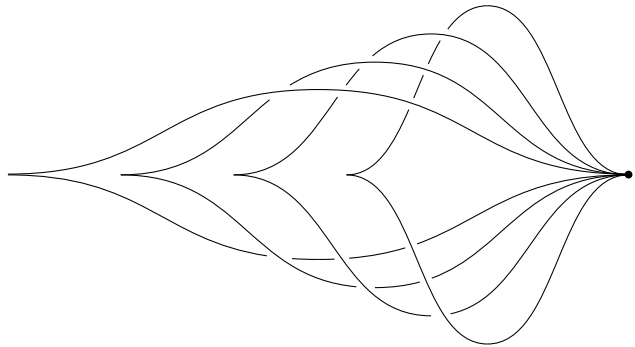}

    \caption{The singular Legendrian $\Lambda_{\text{cyc}}^{n}$, defined as the
    union of $n$ unknots successively more squeezed in the $x$-direction and
    stretched in the $z$-direction.} 
    \label{fig:vanishing-example}
\end{figure}
First, we consider a standard unknot with a single top handle and with the
singularity and right cusp placed at the origin in $\R^{3}$. We construct a
second unknot by applying a transformation of the form $( x,z ) \to ( x \slash
M, M z)$ for some $M > 1$ to the front diagram of the first unknot.  We
construct a third unknot by applying the same transformation to the second
unknot, and continue like this until we have $n$ unknots. Then
$\Lambda_{\text{cyc}}^{n}$ is defined to be the union of these unknots, see
\autoref{fig:vanishing-example}.

\begin{proposition}
	The minimal model of $CE^{*}(
	\Lambda_{\textup{cyc}}^{n};V_{\textup{cyc},0}^{n};\R^{4} )$ is 
	isomorphic to
	the path algebra of the quiver with $n$ vertices $1,\ldots,n$ and one
	arrow from $i$ to $j$ for each ordered pair $( i,j ) \in
	\{1,\ldots,n\}^{2}$ such that $i \neq j$.
\end{proposition}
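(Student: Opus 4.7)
The plan is to apply \autoref{res:resolution} after an initial Legendrian isotopy and then compute the Chekanov--Eliashberg algebra of the resulting smooth bordered Legendrian directly. First, using the singular Reidemeister moves, I would move $\Lambda_{\text{cyc}}^n$ into a position where its $n$ singularities lie on the right of the diagram at distinct $z$-heights, with the body of each unknot extending to the left as a nested loop. By \autoref{res:resolution} one obtains a quasi-isomorphism
\[
CE^{*}(\Lambda_{\text{cyc}}^n; V_{\text{cyc},0}^n; \R^4) \cong CE^{*}(\Lambda_{\text{cyc}}^{n,\bullet}; \R^4),
\]
reducing the problem to a computation on the smooth bordered Legendrian $\Lambda_{\text{cyc}}^{n,\bullet}$.

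Next, I would enumerate the Reeb chord generators of $CE^{*}(\Lambda_{\text{cyc}}^{n,\bullet}; \R^4)$ via the Ng resolution. Each of the $n$ resolved valency-two singularities contributes, through its positive-then-negative half-twist, a pair of self-chords at the corresponding idempotent $e_i$; by the same argument that yields $CE^{*}(\Lambda;V_0;\R^4)\cong \textbf{k}$ for a single unknot in \autoref{ex:unknot}, this pair is acyclic at each vertex. The nested layout of the $n$ loops additionally produces, for each ordered pair $(i,j)$ with $i \neq j$, exactly one Reeb chord $a_{ij}$ of degree zero, giving $n(n-1)$ cross-chord generators in total. A direct analysis of admissible disks shows that $\partial a_{ij} = 0$: any potential differential term of the form $a_{ik}a_{kj}$ would require a rigid disk passing through one of the resolved singularities in a way prohibited by the condition in \autoref{def:resolution} that no new chords are introduced between ends corresponding to different singularities.

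Finally, I would filter $CE^{*}(\Lambda_{\text{cyc}}^{n,\bullet}; \R^4)$ by Reeb chord action and pass to the associated spectral sequence. On the $E_1$-page, the acyclic self-chord pairs at each vertex cancel, leaving the subalgebra on $n$ idempotents and the $n(n-1)$ cross-chord generators $a_{ij}$, all of degree zero and with vanishing differential, which is precisely the path algebra of the stated quiver. Since the cohomology is concentrated in degree zero while $\mu_k$ shifts degree by $2-k$, every higher $A_\infty$-operation $\mu_k$ with $k \geq 3$ vanishes for degree reasons, confirming minimality and formality of the model. The main obstacle will be the middle step: carefully enumerating the cross-chords of nested resolved unknot tongues in the Ng resolution and verifying that no unexpected differentials arise between them, which requires a detailed analysis of admissible disks in the Lagrangian projection together with the separation guaranteed by \autoref{def:resolution}.
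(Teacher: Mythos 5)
Your argument starts from a misreading of the construction of $\Lambda_{\text{cyc}}^{n}$. Each of the $n$ unknots has its singular point placed at the origin, and the rescaling $(x,z)\mapsto(x/M,Mz)$ fixes the origin, so all the loops share one and the same singular point: $\Lambda_{\text{cyc}}^{n}$ has a \emph{single} singularity of valency $2n$, not $n$ valency-two singularities. (This is also forced by the statement being proved: the opening $\Lambda_{\text{cyc}}^{n,\circ}$ has $n$ components precisely because the $2n$ strands are separated at this one right vertex, and \autoref{res:opening} only allows opening at a single right singularity.) Your first step, a Legendrian isotopy placing ``the $n$ singularities'' at distinct heights, therefore does not describe an isotopy of $\Lambda_{\text{cyc}}^{n}$ but a different singular Legendrian (a disjoint union of $n$ singular unknots with $n$ subcritical handles instead of one); a Legendrian isotopy cannot change the number of singular points, and the Weinstein moves that would split the vertex do not preserve the quasi-isomorphism type of $CE^{*}$. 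Consequently your picture of $\Lambda_{\text{cyc}}^{n,\bullet}$ is wrong: resolving the actual valency-$2n$ vertex introduces half-twist chords between \emph{all} $2n$ strands, in particular between strands belonging to different loops, so the generators are not ``a pair of self-chords at each vertex'' plus the $a_{ij}$, and the appeal to the clause in \autoref{def:resolution} that no chords are created between ends of \emph{different} singularities is vacuous here, since there is only one singularity and its twist chords do connect different loops.

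Two further steps are under-justified even within your own setup: the claim that each twist pair is acyclic ``by the same argument as \autoref{ex:unknot}'' (that example computes the opening, which has no Reeb chords at all, and provides no cancellation argument for twist chords), and the spectral-sequence step in which the twist chords are simply dropped and the surviving differential declared zero --- cancelling generators of a dg-algebra can feed correction terms into the differentials of the remaining generators, so one needs an argument in the spirit of \autoref{res:exact-removal} or homological perturbation, not an $E_1$-page count, and the a priori possibility that $\partial a_{ij}$ contains twist-chord terms is exactly what must be excluded. The paper's route avoids all of this: open at the single right singularity via \autoref{res:opening}; the opening consists of $n$ nested arcs with only left cusps, so the only generators are one chord $a_{ij}$ for each ordered pair $(i,j)$ with $i\neq j$, all of degree $0$ for a common Maslov potential; since the algebra is then concentrated in degree $0$ and $\partial$ has degree $+1$, the differential vanishes identically and the dg-algebra is its own minimal model. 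Your closing observation that $\mu_{k}$ vanishes for $k\geq 3$ by degree reasons is correct, but the enumeration of generators and the vanishing of the differential --- the actual content of the proof --- are the points your argument does not establish.
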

\begin{proof}
	For each ordered pair $(i,j)$ of distinct components of
	$\Lambda_{\textup{cyc}}^{n,\circ}$, there is one
	Reeb chord $a_{ij}$ from $i$ to $j$. If one gives the handles the same
	Maslov potential then each $a_{ij}$ has degree zero, and the
	differential thus vanishes. The result then follows by \autoref{res:opening}.
\end{proof}
\begin{proposition}
	The Legendrian $\Lambda_{\textup{cyc}}^{n}$ is not Weinstein
	isotopic to any permutation Legendrian.
\end{proposition}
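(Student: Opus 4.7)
The plan is to exhibit a cohomological invariant that distinguishes $CE^{*}(\Lambda_{\text{cyc}}^{n};V_{\text{cyc},0}^{n};\R^{4})$ from $CE^{*}(\Lambda_{\sigma};V_{\sigma,0};\R^{4})$ for every permutation Legendrian $\Lambda_{\sigma}$, and to interpret it as an obstruction to Weinstein isotopy.

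The decisive computation is of $H^{0}CE^{*}$. For any permutation Legendrian $\Lambda_{\sigma}$, the description of $CE^{*}(\Lambda_{\sigma}^{\circ};\R^{4})$ given just before Example~\ref{ex:standard-an} shows that, under the natural Maslov potential (constant on each handle), every Reeb chord generator $a_{ij}$ has degree $1$. Thus $CE^{0}(\Lambda_{\sigma}^{\circ};\R^{4})$ is spanned only by the $n$ idempotents, and since $\partial$ raises degree we obtain $H^{0}CE^{*}(\Lambda_{\sigma}^{\circ};\R^{4}) \cong \mathbf{k}^{n}$; applying \autoref{res:opening} yields $H^{0}CE^{*}(\Lambda_{\sigma};V_{\sigma,0};\R^{4}) \cong \mathbf{k}^{n}$. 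By contrast, the immediately preceding proposition identifies the minimal model of $CE^{*}(\Lambda_{\text{cyc}}^{n};V_{\text{cyc},0}^{n};\R^{4})$ with the path algebra $\mathbf{k}K_{n}$ of the complete directed quiver on $n$ vertices, concentrated in degree $0$; in particular $H^{0}CE^{*}(\Lambda_{\text{cyc}}^{n};V_{\text{cyc},0}^{n};\R^{4})$ is infinite-dimensional for all $n \geq 2$.

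To turn this asymmetry into a Weinstein-isotopy obstruction, I would invoke the expected invariance of $\mathrm{Perf}(CE^{*}(\Lambda;V_{0};\R^{4}))$ under Weinstein isotopy discussed in the paragraph following \autoref{res:invariance}. A quasi-equivalence of Perf categories is a derived Morita equivalence and hence preserves Hochschild homology. For the formal algebras at hand, the internal-degree-zero part $HH_{0}(A)^{0}$ reduces to $H^{0}A/[H^{0}A,H^{0}A]$: this is $\mathbf{k}^{n}$ on any permutation side, and infinite-dimensional on the cyclic side (the complete quiver $K_{n}$ has arbitrarily long closed paths, all distinct modulo cyclic rotation). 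These cannot be isomorphic, so the Legendrians are not Weinstein isotopic.

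The principal obstacle is that the Perf-invariance statement appears in the paper only as an expectation. One must therefore either establish it in the narrow generality needed here, or else bypass it by tracking $H^{0}$ directly through the elementary Weinstein moves of \autoref{res:weinstein-lemma}: Legendrian isotopies preserve the quasi-isomorphism type of $CE^{*}$ by \autoref{res:invariance}, while any handle contraction or introduction, via \autoref{res:weinstein-isotopy-cor}, relates the two algebras by a quasi-isomorphism onto the subalgebra indexed by the remaining handles. Induction on the number of moves then shows that $H^{0}$ stays infinite-dimensional along any Weinstein isotopy starting at $\Lambda_{\text{cyc}}^{n}$ (for $n \geq 3$ because any sub-quiver of $K_{n}$ on at least two vertices is still cyclic; the edge case $n=2$ requires the more refined $HH_{0}$-tracking described above), contradicting the finiteness of $H^{0}CE^{*}(\Lambda_{\sigma};V_{\sigma,0};\R^{4})$ and finishing the proof.
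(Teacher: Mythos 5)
Your computation of the two sides is fine: for a permutation Legendrian all generators $a_{ij}$ sit in degree $1$, so $H^{0}$ is spanned by the idempotents, while $CE^{*}( \Lambda^{n,\circ}_{\text{cyc}};\R^{4} )$ is the path algebra of a quiver with oriented cycles concentrated in degree $0$, hence has infinite-dimensional $H^{0}$ (and $HH_{0}$). The gap is in turning this into an obstruction to \emph{Weinstein} isotopy. Your primary route rests on the Perf-invariance statement, which, as you yourself note, appears in the paper only as an expectation; it cannot carry the proof. Your fallback — tracking $H^{0}$ through the elementary moves via \autoref{res:weinstein-isotopy-cor} — does not work. Handle introduction is harmless (the old cohomology embeds as a corner $eH^{*}e$ of the new one), but handle contraction goes the wrong way: by \autoref{res:weinstein-isotopy-cor} the contracted algebra is quasi-isomorphic to the corner $CE^{*}(\Lambda;V_{0};\R^{4})[\Lambda\setminus\Pi,\Lambda\setminus\Pi]$, i.e.\ to $eH^{*}e$ for the idempotent $e$ of the \emph{remaining} handles, and infinite-dimensionality of $H^{0}$ is not inherited by such a corner. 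Moreover, after intermediate Legendrian isotopies and introductions the handle decomposition has no controlled relation to the original one, so the intermediate cohomologies are in no sense "sub-quivers of $K_{n}$"; your parenthetical argument for $n\ge 3$ presupposes exactly that. Indeed, the dimension of $H^{0}$ (and of $HH_{0}$ read off from $H^{0}$ alone) is simply not a Weinstein isotopy invariant of $CE^{*}$ in this paper's setting, which is why no induction on moves can close the argument.

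The paper's proof avoids this by using an invariance statement that is actually available: by \cite[Remark 1.5]{AE21} the Hochschild homology of the singular Chekanov--Eliashberg algebra is a Weinstein isotopy invariant (and it is invariant under quasi-isomorphism). One then checks $HH_{0}$ of the cyclic-quiver path algebra is infinite dimensional directly (e.g.\ the classes $[(a_{21}a_{12})^{i}]$), and on the permutation side one does not compute $H^{0}$ at all: $CE^{*}( \Lambda_{\sigma}^{\circ};\R^{4} )$ is a finite-dimensional semi-free dg-algebra, hence homologically smooth and compact, so its entire Hochschild homology is finite dimensional by \cite[Proposition 8.10]{KS09}. If you want to keep your argument, you must either prove the invariance of $HH_{*}$ (or of the Morita class) under Weinstein isotopy in the generality you need, or replace the $H^{0}$ bookkeeping by an invariant that is genuinely preserved by the contraction move; as written, the key step is missing.
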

\begin{proof}
	As noted in \cite[Remark 1.5]{AE21}, the Hochschild homology of the
	singular Chekanov--Eliashberg dg-algebra is a Weinstein isotopy invariant.
	The Hochschild homology is derived invariant, and in particular,
	invariant under quasi-isomorphism. Recall that the Hochschild homology
	in degree $0$ of an associate algebra $A$ is the quotient $A \slash
	\text{Span}\{ab - ba \mid a,b \in A\}$. Since $CE^{*}( \Lambda^{n,
	\circ}_{\text{cyc}};\R^{4} )$ is the path algebra of a cyclic quiver
	this implies that $HH_{0}(CE^{*}( \Lambda^{n,
	\circ}_{\text{cyc}};\R^{4} ) )$ is infinite dimensional (a
	linearly independent infinite subset 
	is given by e.g. $\{[(a_{21}a_{12})^{i}] \mid i
	\in \N \}$). On the other
	hand, for any permutation Legendrian $\Lambda_{\sigma}$, the algebra
	$CE^{*}( \Lambda_\sigma^{\circ};\R^{4} )$ is a semi-free finite
	dimensional dg-algebra and therefore homologically smooth and compact,
	see \cite[Section 8]{KS09}. By \cite[Proposition 8.10]{KS09}, it
	then follows that the Hochschild homology of $CE^{*}(
	\Lambda_\sigma^{\circ};\R^{4} )$ is finite dimensional. Thus, such a
	Weinstein isotopy cannot exist.
\end{proof}
\end{example}

\begin{example}
	Let $n > 1$ and let $\Lambda_{\theta'}^{n}$ be the Legendrian
	illustrated in \autoref{fig:stabilized-theta}. 
\begin{figure}[H]
    \centering
    
    \import{./figures/}{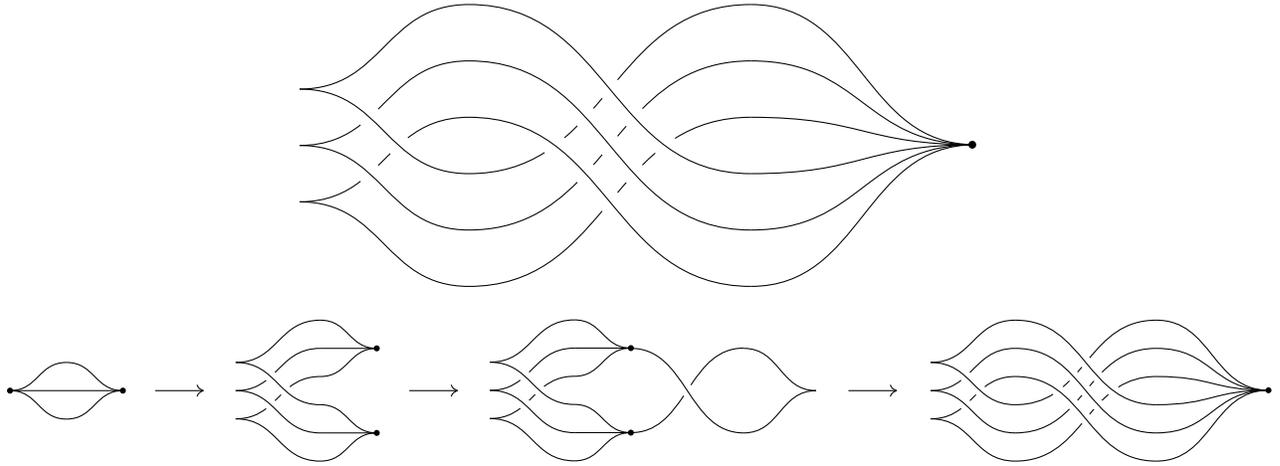}

    \caption{The front projection the singular Legendrian knot
    $\Lambda_{\theta'}^{n}$ for $n=3$, and its construction from
    $\Lambda_{\theta}^{n}$.}
    \label{fig:stabilized-theta}
\end{figure}
	 It is obtained from the Legendrian $\Lambda_{\theta}^{n}$ from 
	 \autoref{ex:theta} by performing a
	 Legendrian isotopy using \autoref{res:push-twist} so that we get two
	 right singularities, and then attaching and contracting a handle with
	 core $\Pi$, as shown in the figure.
\begin{proposition}
	 There is a quasi-isomorphism
	 \[
		 CE^{*}( \Lambda_{\theta'}^{n};V_{\theta',0}^{n};\R^{4} ) \cong 
		 CE^{*}( \Lambda_{\theta}^{n};V_{\theta,0}^{n};\R^{4} ).
	 \] 
\end{proposition}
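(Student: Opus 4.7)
The plan is to factor $\Lambda_\theta^n \rightsquigarrow \Lambda_{\theta'}^n$ into a Legendrian isotopy followed by a single Weinstein handle attachment, and then to show that in this particular configuration the handle attachment preserves the quasi-isomorphism type of the Chekanov--Eliashberg algebra, even though Weinstein isotopies do not do so in general.

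First, \autoref{res:invariance} combined with \autoref{res:push-twist} realizes the Legendrian isotopy bringing $\Lambda_\theta^n$ into a position with two right singularities as a quasi-isomorphism
\[
	CE^*(\Lambda_\theta^n;V_{\theta,0}^n;\R^4) \isomto CE^*(\tilde\Lambda;\tilde V_0;\R^4),
\]
where $\tilde\Lambda$ denotes the intermediate front. Since $\Lambda_{\theta'}^n$ is obtained from $\tilde\Lambda$ by attaching the handle with core $\Pi$ in \autoref{fig:stabilized-theta}, the reverse operation is a handle contraction, and \autoref{res:weinstein-isotopy-cor} provides a dg-algebra morphism
\[
	\Phi : CE^*(\tilde\Lambda;\tilde V_0;\R^4) \to CE^*(\Lambda_{\theta'}^n;V_{\theta',0}^n;\R^4)
\]
which is already known to be a quasi-isomorphism onto the subalgebra $CE^*(\Lambda_{\theta'}^n;V_{\theta',0}^n;\R^4)[\tilde\Lambda,\tilde\Lambda]$ of words whose initial and terminal vertices avoid $\Pi$.

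It therefore remains to show that the inclusion of this subalgebra is itself a quasi-isomorphism. My approach is to use the Ng resolution to enumerate the Reeb chord generators introduced by $\Pi$: the specific placement of $\Pi$ in \autoref{fig:stabilized-theta} should produce a distinguished short chord $a$ on $\partial\Pi$ with $\partial a = e_\Pi$, coming from the small half-disk bounded by $\partial\Pi$, together with at most finitely many auxiliary chords pairing the strands of $\Pi$ with the adjacent right singularities. An iterated application of \autoref{res:exact-removal} -- first removing $a$, and then cancelling the paired auxiliary chords in the fashion used in \autoref{res:stopped-lemma} -- should kill all generators meeting $\Pi$ without changing cohomology, identifying $CE^*(\Lambda_{\theta'}^n;V_{\theta',0}^n;\R^4)$ up to quasi-isomorphism with the subalgebra onto which $\Phi$ is already surjective in cohomology.

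The main obstacle will be the verification that the hypotheses of \autoref{res:exact-removal} truly hold, in particular that $a$ does not occur in the differential of any other generator and that the auxiliary chords really cancel pairwise. This is a local check in the Lagrangian projection near $\Pi$ which should follow from the observation that the half-twist structure introduced by \autoref{res:push-twist} obstructs admissible disks elsewhere in $\Lambda_{\theta'}^n$ from entering a neighborhood of $\Pi$. If needed, an action filtration together with a spectral-sequence argument in the spirit of the proofs of \autoref{res:surgery-map-surface} and \autoref{res:finite-stopped-models} should reduce the remaining bookkeeping to the purely action-preserving level, where the cancellations are routine.
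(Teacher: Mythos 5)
You have assembled the right ingredients---the paper's own proof is exactly the push-twist isotopy combined with \autoref{res:exact-removal} and \autoref{res:weinstein-isotopy-cor}---but you have attached these lemmas to the wrong pairs of Legendrians because you dropped the contraction from the construction. By definition, $\Lambda_{\theta'}^{n}$ is obtained from the isotoped Legendrian $\tilde\Lambda$ (the one with two right singularities) by attaching the handle $\Pi$ \emph{and then contracting it}; consequently $\Pi$ is not a top handle of $\Lambda_{\theta'}^{n}$ (which has the same $n$ handles as $\Lambda_{\theta}^{n}$ and a single singularity), so the subalgebra ``$CE^{*}(\Lambda_{\theta'}^{n};V_{\theta',0}^{n};\R^{4})[\tilde\Lambda,\tilde\Lambda]$ of words avoiding $\Pi$'' that your $\Phi$ is supposed to hit is not defined, and the removal argument you want to run inside $CE^{*}(\Lambda_{\theta'}^{n};V_{\theta',0}^{n};\R^{4})$ at the vertex $\Pi$ has no vertex to remove. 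There is also a conceptual conflation: the inverse of attaching a trivially cusped handle is handle \emph{deletion}, which is what \autoref{res:exact-removal} encodes algebraically, whereas \autoref{res:weinstein-isotopy-cor} concerns handle \emph{contraction} (i.e.\ the relative algebra obtained by attaching a critical handle along $\Sigma(\Pi)$, cf.\ \autoref{res:sing-cobordism-map}); these are different operations, so even under your reading of the figure the corollary does not produce a morphism $CE^{*}(\tilde\Lambda;\tilde V_{0};\R^{4})\to CE^{*}(\Lambda_{\theta'}^{n};V_{\theta',0}^{n};\R^{4})$. The correct wiring runs through the intermediate Legendrian $\hat\Lambda=\tilde\Lambda\cup\Pi$: the Reeb chord at the right cusp of $\Pi$ has differential $\pm e_{\Pi}$ and occurs in no other differential, so \autoref{res:exact-removal} gives $CE^{*}(\hat\Lambda)\simeq CE^{*}(\tilde\Lambda)\simeq CE^{*}(\Lambda_{\theta}^{n};V_{\theta,0}^{n};\R^{4})$ and, as in the last step of the proof of \autoref{res:stopped-lemma}, shows that the inclusion of $CE^{*}(\hat\Lambda)[\hat\Lambda\setminus\Pi,\hat\Lambda\setminus\Pi]$ is a quasi-isomorphism; \autoref{res:weinstein-isotopy-cor}, applied to the contraction of $\Pi$ in $\hat\Lambda$, gives a morphism $CE^{*}(\Lambda_{\theta'}^{n};V_{\theta',0}^{n};\R^{4})\to CE^{*}(\hat\Lambda)$ which is a quasi-isomorphism onto exactly that subalgebra, and the proposition follows.

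Two further corrections. The exact generator is the ambient Reeb chord at the right cusp of $\Pi$ (a crossing after the Ng resolution), not a ``short chord on $\partial\Pi$ coming from a small half-disk'': by \autoref{res:zero-dim-diff} no short boundary chord has differential equal to an idempotent (only full-winding chords $c^{1}_{ii}$ in a disk component produce idempotents, and those come with additional quadratic terms), so the chord you propose cannot satisfy the hypotheses of \autoref{res:exact-removal}. Finally, the iterated ``pairwise cancellation of auxiliary chords'' is neither available from \autoref{res:exact-removal} nor needed: a single application of that lemma at the vertex $\Pi$ already kills every generator with an endpoint on $\Pi$; the iteration in \autoref{res:stopped-lemma} was required only because each strand $\Omega_{k}$ there carried its own exact chord.
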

\begin{proof}
	The Reeb chord corresponding to the right cusp of $\Pi$ in
	\autoref{fig:stabilized-theta} has
	differential $\pm e_{\Pi}$. The result then follows by
	\autoref{res:exact-removal} and \autoref{res:weinstein-isotopy-cor}
\end{proof}
\begin{remark}
	This provides an alternative proof for a result by Etg\"{u}--Lekili.
	Let $CE^{*}( \partial \Omega;D_{2} )$ be the Chekanov--Eliashberg
	dg-algebra of $n$ one-point stops in the boundary of the disk $D^{2}$.  
	In \cite[Theorem 12]{EL19}, it was shown that the inclusion of the
	dg-subalgebra of all chords $c_{ij}^{p}$ with $p \leq 1$ into $CE^{*}(
	\partial \Omega;D_{2} )$ is a quasi-isomorphism. This subalgebra is
	isomorphic to $CE^{*}( \Lambda_{\theta'}^{n, \circ};\R^{4} )$. The
	chords with $p=0$ correspond to the chords in the triangle shape to
	the left in \autoref{fig:stabilized-theta}, and the chords with $p=1$
	to the chords in the diamond shape in the center. Using
	\autoref{res:opening} we thus recover \cite[Theorem 12]{EL19}.
\end{remark}
\end{example}

\bibliographystyle{alpha} 
\bibliography{refs}
\end{document}